\def \wideubar{\underaccent{{\cc@style\underline{\mskip15mu}}}}
\def \widebar{\accentset{{\cc@style\underline{\mskip10mu}}}}
\definecolor{blue}{rgb}{0,0,0.9}
\definecolor{red}{rgb}{0.9,0,0}
\definecolor{green}{rgb}{0,0.9,0}
\definecolor{brown}{rgb}{0.6,0.1,0.1}
\definecolor{lightgreen}{rgb}{0.1,0.5,0.1}
\begin{document}

\newtheorem{property}{Property}[section]
\newtheorem{proposition}{Proposition}[section]
\newtheorem{append}{Appendix}[section]
\newtheorem{definition}{Definition}[section]
\newtheorem{lemma}{Lemma}[section]
\newtheorem{corollary}{Corollary}[section]
\newtheorem{theorem}{Theorem}[section]
\newtheorem{remark}{Remark}[section]
\newtheorem{problem}{Problem}[section]
\newtheorem{example}{Example}[section]
\newtheorem{assumption}{Assumption}
\renewcommand*{\theassumption}{\Alph{assumption}}

\title{Inexact Accelerated Proximal Gradient Method Revisit: 
\\ An Economical Variant via Shadow Points}

\author{Lei Yang\thanks{School of Computer Science and Engineering, and Guangdong Province Key Laboratory of Computational Science, Sun Yat-sen University ({\tt yanglei39@mail.sysu.edu.cn}). },\qquad
Meixia Lin\thanks{(Corresponding author) Engineering Systems and Design, Singapore University of Technology and Design ({\tt meixia\_lin@sutd.edu.sg}). }
}

\maketitle

\begin{abstract}
We consider the problem of optimizing the sum of a smooth convex function and a non-smooth convex function via the inexact accelerated proximal gradient (APG) method. A key limitation of existing inexact APG methods is their reliance on \textit{feasible} approximate solutions of the subproblems, which is often computationally expensive or even unrealistic to obtain in practice. To overcome this limitation, we develop a shadow-point enhanced inexact APG method (SpinAPG), which relaxes the feasibility requirement by allowing the computed iterates to be potentially infeasible, while introducing an auxiliary feasible \textit{shadow point} solely for error control without requiring its explicit computation. This design decouples feasibility enforcement from the algorithmic updates and leads to a flexible and practically implementable inexact framework. Under suitable summable error conditions, we show that SpinAPG preserves all desirable convergence properties of the APG method, including the iterate convergence and an $o(1/k^2)$ convergence rate for the objective function values. These results complement and extend existing convergence analyses of inexact APG methods by demonstrating that the accelerated convergence can be retained even in the presence of controlled infeasibility. Numerical experiments on sparse quadratic programming problems illustrate the practical advantages of SpinAPG, showing that it can substantially reduce computational overhead by avoiding explicit computations of feasible points.

\vspace{5mm}
\noindent {\bf Keywords:}~~Inexact proximal gradient method; Nesterov's acceleration; error criterion; feasible shadow point


\end{abstract}

\section{Introduction}

In this paper, we consider the following convex composite optimization problem:
\begin{equation}\label{mainpro}
\min\limits_{\bm{x}\in\mathbb{E}}~~F(\bm{x}) := P(\bm{x}) + f(\bm{x}),
\end{equation}
where $\mathbb{E}$ is a real finite-dimensional Euclidean space endowed with an inner product $\langle\cdot,\cdot\rangle$ and its induced norm $\|\cdot\|$, $f: \mathbb{E}\rightarrow\mathbb{R}$ is a continuously differentiable convex function with a Lipschitz continuous gradient, and $P:\mathbb{E}\to(-\infty, \infty]$ is a proper closed convex function. Problem \eqref{mainpro} arises in various areas such as machine learning, data science, and image/signal processing, and has been extensively studied in the literature, see, e.g., \cite{bt2009a,cd2015convergence,jst2012inexact,n2013gradient,srb2011convergence,vsbv2013accelerated}. 

Many modern applications of \eqref{mainpro} involve extremely large-scale decision variables (often in the millions), making first-order methods particularly appealing due to their computational simplicity and well-established convergence properties. Among such methods, the proximal gradient (PG) method \cite{bt2009a,cp2011proximal,cw2005signal,fm1981a,lm1979splitting} is one of the most prominent. Given an estimate Lipschitz constant $L$ of $\nabla f$, the PG method generates a sequence $\{\bm{x}^k\}$ using the scheme:
\begin{equation}\label{pgscheme}
\bm{x}^{k+1}=\mathop{\mathrm{argmin}}\limits_{\bm{x}\in\mathbb{E}} \left\{P(\bm{x}) + \langle \nabla f(\bm{x}^k), \,\bm{x}-\bm{x}^k\rangle + \frac{L}{2}\|\bm{x}-\bm{x}^k\|^2\right\}.
\end{equation}
It is well known that the PG method achieves the convergence rate $F(\bm{x}^k) - \min\limits_{\bm{x} \in \mathbb{E}}\{F(\bm{x})\}=\mathcal{O}(1/k)$; see, e.g., \cite{bt2009a,n2005smooth}. To improve this rate, Nesterov-type extrapolation can be incorporated, leading to the accelerated proximal gradient (APG) method, which attains the $\mathcal{O}(1/k^2)$ convergence rate for the objective function values. A typical APG scheme for solving problem \eqref{mainpro} takes the form
\begin{align}
\bm{y}^k &= \bm{x}^k + \beta_k (\bm{x}^k - \bm{x}^{k-1}), \label{pgescheme1}
\\
\bm{x}^{k+1} &= \mathop{\mathrm{argmin}}\limits_{\bm{x}\in\mathbb{E}}
\left\{ P(\bm{x}) + \langle \nabla f(\bm{y}^k), \,\bm{x} - \bm{y}^k\rangle + \frac{L}{2}\|\bm{x} - \bm{y}^k\|^2 \right\}, \label{pgescheme2}
\end{align}
where $\{\beta_k\}$ is a sequence of extrapolation parameters satisfying suitable conditions. The fast iterative soft-thresholding algorithm (FISTA) \cite{bt2009a} is a widely used instance of this framework, where $\{\beta_k\}$ are updated as $\beta_k=(t_{k-1}-1)/t_k$ with $t_{k}=\left(1+\sqrt{1+4t_{k-1}^2}\right)/2$ and $t_{-1}=t_0=1$. Although FISTA enjoys fast convergence in the objective function values, the convergence of the iterate sequence $\{\bm{x}^k\}$ itself remained unclear until the work of Chambolle and Dossal \cite{cd2015convergence}, who established iterate convergence by modifying the extrapolation parameters as $\beta_k = (t_{k-1}-1)/t_k$ with $t_k=\frac{k+\alpha-1}{\alpha-1}$ for $\alpha>3$, while preserving the $\mathcal{O}(1/k^2)$ convergence rate for the objective function values.

The theoretical analyses of the PG and APG methods typically assume that the subproblem \eqref{pgscheme} (or \eqref{pgescheme2}) is solved \textit{exactly}. However, in practice, exact solutions are often computationally expensive or even unattainable, especially when $P$ is complex or when the problem dimension is large. This limitation has motivated extensive research on inexact variants of the PG and APG methods, where the subproblem is solved only \textit{inexactly} while retaining provable convergence guarantees; see, e.g., \cite{accr2018inertial,acpr2018fast,ap2016rate,ad2015stability,bgk2023on,jst2012inexact,mm2019inexact,srb2011convergence,vgds2023accelerated,vsbv2013accelerated}. In the following, we introduce several representative inexact variants of the APG method.

Early work by Schmidt et al. \cite{srb2011convergence} allowed inexactness in evaluations of both the gradient and the proximity operator. Specifically, at the $k$th iteration, an approximate solution $\bm{x}^{k+1}$ to the subproblem \eqref{pgescheme2} is permitted, provided that it satisfies
\begin{equation}\label{srb2011-cond}
\widetilde{h}_k(\bm{x}^{k+1}) - \min_{\bm{x} \in \mathbb{E}} \left\{\widetilde{h}_k(\bm{x})\right\} \leq \varepsilon_k,
\end{equation}
where
\begin{equation}\label{eq:def_htilde}
\widetilde{h}_k(\bm{x}) := P(\bm{x}) + \langle \nabla f(\bm{y}^k) + \Delta^k, \,\bm{x} - \bm{y}^k\rangle + \frac{L}{2}\|\bm{x} - \bm{y}^k\|^2.
\end{equation}
Here, $\Delta^k$ represents the error in the gradient evaluation, while $\varepsilon_k$ indicates that $\bm{x}^{k+1}$ is an $\varepsilon_k$-minimizer of the objective function associated with the proximal mapping of $P$. While the error criterion \eqref{srb2011-cond} is difficult to verify directly due to the unknown minimum value, it can often be assessed in practice using a duality gap. At nearly the same time, Villa et al. \cite{vsbv2013accelerated} proposed an alternative criterion based on the $\varepsilon$-subdifferential of $P$. Specifically, at the $k$th iteration, the iterate $\bm{x}^{k+1}$ is required to satisfy
\begin{equation}\label{vsbv2013-cond}
0 \in \partial_{\varepsilon_k} P(\bm{x}^{k+1}) + \nabla f(\bm{y}^k) + L(\bm{x}^{k+1}-\bm{y}^k),
\end{equation}
where $\varepsilon_k$ quantifies the allowable error in approximating the subdifferential of $P$. While the $\varepsilon$-subdifferential criterion \eqref{vsbv2013-cond} is stronger than the $\varepsilon$-minimizer criterion \eqref{srb2011-cond} (as any point satisfying \eqref{vsbv2013-cond} can readily satisfy \eqref{srb2011-cond} with $\Delta^k=0$), this stronger criterion can offer a better (weaker) dependence on the error decay, as discussed in \cite[Section 1.2]{vsbv2013accelerated}. Nevertheless, \textit{neither} Schmidt et al. \cite{srb2011convergence} \textit{nor} Villa et al. \cite{vsbv2013accelerated} studied the iterate convergence for their inexact APG methods. Later, Aujol and Dossal \cite{ad2015stability} extended the work of Chambolle and Dossal \cite{cd2015convergence} to the inexact setting, and provided an analysis of the iterate convergence. They introduced two distinct approximation criteria for solving the subproblem \eqref{pgescheme2}. Specifically, at the $k$th iteration, their framework allows for an approximate solution $\bm{x}^{k+1}$ that satisfies either
\begin{equation}\label{ad2015-cond}
(a)~~ 0 \in \partial_{\varepsilon_k} \widetilde{h}_k(\bm{x}^{k+1}),
\quad\mbox{or}\quad
(b)~~ \Delta^{k} \in \partial_{\varepsilon_k}P(\bm{x}^{k+1})
+ \nabla f(\bm{y}^k) + L\big(\bm{x}^{k+1}-\bm{y}^k\big),
\end{equation}
where $\widetilde{h}_k$ is defined in \eqref{eq:def_htilde}. Note that criterion (a) in \eqref{ad2015-cond} is equivalent to \eqref{srb2011-cond}, while criterion (b) in \eqref{ad2015-cond} generalizes \eqref{vsbv2013-cond} by incorporating an additional error in the gradient evaluation. With special choices of extrapolation parameters, their analysis further establishes an improved $o(1/k^2)$ convergence rate for the objective function values.

All the error criteria discussed so far are of the absolute type, meaning they require one or more sequences of tolerance parameters that satisfy suitable summable conditions in advance. Recently, Bello-Cruz et al. \cite{bgk2023on} proposed a relative-type error criterion for the APG method. Specifically, at the $k$th iteration, they allow finding a triple $(\bm{x}^{k+1},\, \Delta^{k},\, \varepsilon_{k})$ that satisfies
\begin{equation}\label{bgk2023on-cond}
\begin{aligned}
&\Delta^{k} \in \partial_{\varepsilon_k} P(\bm{x}^{k+1}) + \nabla f(\bm{y}^k) + \frac{L}{\tau}(\bm{x}^{k+1} - \bm{y}^{k}), \\
&\mathrm{with}~~ \|\tau \Delta^{k}\|^2 + 2\tau\varepsilon_k L \leq L\left(\left(1-\tau\right)L - \alpha \tau\right) \| \bm{x}^{k+1} - \bm{y}^{k}\|^2, \\
\end{aligned}
\end{equation}
where $\tau\in(0,1]$ and $\alpha\in\left[0, \frac{(1-\tau)L}{\tau}\right]$. Compared to the absolute-type error criterion (e.g., \eqref{srb2011-cond} and \eqref{vsbv2013-cond}), the relative-type error criterion \eqref{bgk2023on-cond} eliminates the need to predefine summable sequences of tolerance parameters and is typically more tuning-friendly. However, this advantage comes at the cost of increased verification complexity and computational overhead, as well as a potentially shorter step size $\tau/L$, which may in turn degrade practical performance. More inexact APG methods for solving problem \eqref{mainpro} under different scenarios can be found in \cite{accr2018inertial,acpr2018fast,ap2016rate,brr2018inertial,jfzsc2025inexact,mm2019inexact,vgds2023accelerated}.

Despite their differences, a common feature of all the aforementioned error criteria is that they require an approximate iterate $\bm{x}^{k+1}$ to remain \textit{feasible}, i.e., to lie in the domain of $P$, at every iteration. This feasibility requirement can be overly restrictive in practice, particularly for constrained problems with complicated feasible sets. Indeed, enforcing feasibility at every iteration may require expensive projections or auxiliary optimization procedures, which can significantly increase the computational cost. To address this issue, Jiang et al. \cite{jst2012inexact} developed a specialized error criterion for the APG method to solve the linearly constrained convex semidefinite programming problem, which permits controlled infeasibility with respect to linear constraints. However, their framework is problem-specific: it does not accommodate general nonsmooth regularizers, relies on strong assumptions on the dual solutions of the subproblems, and does not establish either the iterate convergence or an $o(1/k^2)$ convergence rate for the objective function values.

Motivated by these limitations and inspired by recent advances in inexact mechanisms studied in \cite{clty2023an,yt2022bregman,yt2025inexact}, in this paper, we revisit inexact APG methods and aim to develop a more flexible, practically implementable, and computationally economical inexact variant. Specifically, we introduce a \underline{s}hadow-\underline{p}oint enhanced \underline{in}exact \underline{APG} method, termed SpinAPG. The key novelty of SpinAPG lies in its error criterion, which is constructed using two distinct points: a possibly \textit{infeasible} iterate $\bm{x}^{k+1}$ and a conceptually defined \textit{feasible} shadow point $\widetilde{\bm{x}}^{k+1}$, the latter of which does not require explicit computation. This design allows us to decouple feasibility enforcement from the main APG iteration, thereby potentially avoiding explicit projections that are commonly required in existing inexact APG methods. Moreover, the proposed error criterion also accommodates errors in evaluations of both the gradient and the proximity operator, aligning with several existing error criteria. As a result, SpinAPG is highly adaptable to various types of errors that arise when solving subproblems inexactly, and it thus significantly enhances the practical applicability of the APG method, particularly for large-scale constrained problems. To highlight the distinctions among existing inexact APG methods, we provide a brief comparison in Table \ref{tab:cmp-algos}.

\begin{table}[ht]
\caption{Comparisons of inexact APG methods.}\label{tab:cmp-algos}
\centering \tabcolsep 2.5pt
\vspace{-1mm}
\scalebox{0.99}{
\begin{tabular}{cccccccc}
\toprule
\multirow{2}{*}{\texttt{Reference}}      & \multicolumn{2}{c}{\texttt{Allow Error}} & \multirow{2}{*}{\shortstack{\texttt{Allow} \\[2pt] \texttt{Infeas.}}} & \multicolumn{2}{c}{\texttt{Fun. Val. Rate}} & \multirow{2}{*}{\shortstack{\texttt{Iterate} \\[2pt] \texttt{Conv.}}} & \multirow{2}{*}{\texttt{Type}} \\
\cmidrule(lr){2-3} \cmidrule(lr){5-6}
& \texttt{Grad.}  & \texttt{Prox. Operator}  &  & $\mathcal{O}(1/k^2)$ & $o(1/k^2)$ &&  \\
\midrule
Schmidt et al. \cite{srb2011convergence}  & \ding{51} & $\varepsilon$-min. & \ding{55} & \ding{51} & \ding{55} & \ding{55} & \texttt{Abs.} \\
Villa et al. \cite{vsbv2013accelerated}  & \ding{55} & $\varepsilon$-subdiff.  & \ding{55} & \ding{51} & \ding{55} & \ding{55} & \texttt{Abs.} \\
Aujol\,\&\,Dossal \cite{ad2015stability} & \ding{51} & $\varepsilon$-min. or $\varepsilon$-subdiff.  & \ding{55} & \ding{51} & \ding{51} & \ding{51} & \texttt{Abs.} \\
Bello-Cruz et al. \cite{bgk2023on}  & \ding{51} & $\varepsilon$-subdiff. & \ding{55} & \ding{51} & \ding{55} & \ding{55} & \texttt{Rel.} \\
Jiang et al. \cite{jst2012inexact}  & \ding{51} & $\varepsilon$-subdiff. & \ding{51} & \ding{51} & \ding{55} & \ding{55} & \texttt{Abs.} \\
SpinAPG [this work]  & \ding{51} & $\varepsilon$-subdiff./two points & \ding{51} & \ding{51} & \ding{51} & \ding{51} & \texttt{Abs.}  \\
\bottomrule
\end{tabular}}
\end{table}

The key contributions of this paper are summarized as follows. 
\begin{itemize}[leftmargin=1.15cm]

\item[{\bf 1.}] We develop SpinAPG, a shadow-point enhanced inexact APG method, for solving \eqref{mainpro}. In contrast to existing inexact APG methods \cite{ad2015stability,bgk2023on,jst2012inexact,srb2011convergence,vsbv2013accelerated}, SpinAPG accommodates errors in both gradient evaluations and proximity operator computations, while offering the potential to address feasibility challenges by avoiding explicit projections for computing feasible iterates. This design enhances the flexibility and computational efficiency of inexact APG methods, particularly for problems with complex feasible regions.

\item[{\bf 2.}] Under suitable summability conditions on the error terms, we establish the ${\cal O}(1/k^2)$ convergence rate for the objective function values. Moreover, for specific choices of extrapolation parameters, this rate can be further improved to $o(1/k^2)$ and the iterate sequence is shown to converge. These results complement and extend existing convergence analyses of inexact APG methods by demonstrating that all desirable convergence properties can be preserved even when subproblems are solved inexactly and feasibility is relaxed via the shadow-point mechanism, provided the errors are properly controlled.

\item[{\bf 3.}] We conduct numerical experiments on sparse quadratic programming problems to demonstrate the computational advantages of SpinAPG, which reduces overhead by avoiding explicit computations of feasible points. These results highlight the value of developing inexact APG variants that permit controlled infeasibility.

\end{itemize}

\vspace{1mm}
The rest of this paper is organized as follows. We describe SpinAPG for solving problem \eqref{mainpro} in Section \ref{sec-SpinAPG} and establish the convergence results in Section \ref{sec-convana}. Some numerical experiments are conducted in Section \ref{sec-num}, with conclusions given in Section \ref{seccon}.

\vspace{2mm}
\noindent\textbf{Notation}.
We use lowercase, bold lowercase, and uppercase letters to denote scalars, vectors, and matrices, respectively. The sets $\mathbb{R}$, $\mathbb{R}^n$, $\mathbb{R}^n_+$ ($\mathbb{R}^n_{-}$) and $\mathbb{R}^{m\times n}$ represent real numbers, $n$-dimensional real vectors, nonnegative (nonpositive) $n$-dimensional real vectors, and $m\times n$ real matrices. For $\bm{x}\in\mathbb{R}^n$, $\|\bm{x}\|$ denotes its $\ell_2$ norm. For $A\in\mathbb{R}^{m\times n}$, $\|A\|_F$ denotes its Frobenius norm. 
For a closed convex set $\mathcal{X}\subseteq\mathbb{E}$, its indicator function $\delta_{\mathcal{X}}$ is defined as $0$ if $\bm{x}\in\mathcal{X}$ and $+\infty$ otherwise. For a proper closed convex function $f: \mathbb{E} \rightarrow (-\infty, \infty]$ and a given $\varepsilon \geq 0$, the $\varepsilon$-subdifferential of $f$ at $\bm{x}\in{\rm dom}\,f$ is $\partial_{\varepsilon} f(\bm{x}):=\{\bm{d}\in\mathbb{E}: f(\bm{y}) \geq f(\bm{x}) + \langle \bm{d}, \,\bm{y}-\bm{x} \rangle - \varepsilon, ~\forall\,\bm{y}\in\mathbb{E}\}$. When $\varepsilon=0$, $\partial_{\varepsilon} f$ reduces to the subdifferential $\partial f$. For any $\nu>0$, the Moreau envelope of $\nu f$ at $\bm{x}$ is defined by $\mathtt{M}_{\nu f}(\bm{x}) := \min_{\bm{y}} \big\{f(\bm{y}) + \frac{1}{2\nu}\|\bm{y} - \bm{x}\|^2\big\}$, and the proximal mapping of $\nu f$ at $\bm{x}$ is defined by $\mathtt{prox}_{\nu f}(\bm{x}) := \arg\min_{\bm{y}} \big\{f(\bm{y}) + \frac{1}{2\nu}\|\bm{y} - \bm{x}\|^2\big\}$.

\section{A shadow-point enhanced inexact APG method}\label{sec-SpinAPG}

In this section, we build upon the APG method to develop a novel shadow-point enhanced inexact variant, termed SpinAPG, for solving \eqref{mainpro}. To begin, we specify conditions on a parameter sequence $\{\theta_k\}_{k=-1}^{\infty}$, which is used to construct the extrapolation parameter in the algorithm. Specifically, we consider a sequence $\{\theta_k\}_{k=-1}^{\infty}\subseteq(0,1]$ with $\theta_{-1}=\theta_0=1$, such that for all $k\geq1$:
\begin{numcases}{}
\theta_k\leq\frac{\alpha-1}{k+\alpha-1}, ~~\alpha\geq 3, \label{condtheta1} \\
\frac{1-\theta_{k}}{\theta_{k}^{ 2 }} \leq \frac{1}{\theta_{k-1}^{ 2 }}. \label{condtheta}
\end{numcases}
These conditions unify a broad class of extrapolation parameter setting rules in the literature; see, e.g., \cite{cd2015convergence,hrx2018accelerated,t2008on}. Many APG variants, either implicitly or explicitly, adopt such conditions to ensure desirable convergence properties; see, e.g., \cite{ad2015stability,bpr2016variable,brr2018inertial,cd2015convergence,t2008on}. In particular, when \eqref{condtheta} holds as an equality, $\{\theta_k\}$ satisfies the recurrence relation: $\theta_{k} = \frac{1}{2}\left(\sqrt{\theta_{k-1}^4+4\theta_{k-1}^2}-\theta_{k-1}^2\right)$ for all $k\geq1$. This is the classical choice introduced in Nesterov's seminal works \cite{n1983a,n1988on,n2003introductory} and widely adopted in subsequent developments, e.g., \cite{at2006interior,bt2009a,jst2012inexact,t2010approximation}.
Under this choice, an inductive argument shows that $\theta_k\leq\frac{2}{k+2}$ for all $k\geq0$, and as $k\to\infty$, $\theta_k = \frac{2}{k+2}+o(1)$. A more recent alternative, proposed by Chambolle and Dossal \cite{cd2015convergence}, sets $\theta_k = \frac{\alpha-1}{k+\alpha-1}\,(\forall\,k\geq1)$ for some $\alpha\geq3$, which satisfies \eqref{condtheta}. Based on this setting, they established the convergence of iterates when $\alpha>3$. Independently, Attouch et al. \cite{acpr2018fast} obtained a similar result using different arguments. Later, Attouch and Peypouquet \cite{ap2016rate} further showed that $\alpha>3$ yields the improved $o(1/k^2)$ convergence rate for the objective function values.

With these preparations, We are now ready to present SpinAPG in Algorithm \ref{algo-SpinAPG}.

\begin{algorithm}[ht]
\caption{A shadow-point enhanced inexact APG method (SpinAPG) for \eqref{mainpro}}\label{algo-SpinAPG}
\textbf{Input:} Let $\{\eta_k\}_{k=0}^{\infty}$, $\{\mu_k\}_{k=0}^{\infty}$ and $\{\nu_k\}_{k=0}^{\infty}$ be three sequences of nonnegative scalars. Choose $\bm{x}^{-1} = \bm{x}^0\in\mathbb{E}$ arbitrarily. Set $\theta_{-1}=\theta_0=1$ and $k=0$.  \\
\textbf{while} a termination criterion is not met, \textbf{do} 
\begin{itemize}[leftmargin=2cm]
\item[\textbf{Step 1}.] Compute $\bm{y}^k = \bm{x}^k + \theta_k(\theta_{k-1}^{-1}-1)(\bm{x}^k-\bm{x}^{k-1})$.

\vspace{0.5mm}
\item[\textbf{Step 2}.] Find a pair $(\bm{x}^{k+1}, \,\widetilde{\bm{x}}^{k+1})$ and an error pair $(\Delta^k,\,\varepsilon_k)$ by approximately solving
    \begin{equation}\label{subpro-iAPG}
    \min\limits_{\bm{x}\in \mathbb{E}}~ \Big\{P(\bm{x})
    + \langle \nabla f(\bm{y}^k), \,\bm{x}-\bm{y}^k\rangle
    + \frac{L}{2}\|\bm{x}-\bm{y}^k\|^2\Big\},
    \end{equation}
    such that $\bm{x}^{k+1} \in \mathbb{E}$, $\widetilde{\bm{x}}^{k+1} \in \mathrm{dom}\,P$, and
    \begin{equation}\label{inexcond-iAPG}
    \begin{aligned}
    &\quad \Delta^{k} \in \partial_{\varepsilon_k}P(\widetilde{\bm{x}}^{k+1})
    + \nabla f(\bm{y}^k) + L\big(\bm{x}^{k+1}-\bm{y}^k\big) \\[3pt]
    &~~\mathrm{with}~~\|\Delta^{k}\| \leq \eta_k,
    ~~\varepsilon_k\leq\nu_k,
    ~~\|\widetilde{\bm{x}}^{k+1}-\bm{x}^{k+1}\| \leq \mu_{k}.
    \end{aligned}
    \end{equation}

\item[\textbf{Step 3}.] Choose $\theta_{k+1}\in(0,1]$ satisfying conditions \eqref{condtheta1} and \eqref{condtheta}.

\vspace{0.5mm}
\item[\textbf{Step 4}.] Set $k=k+1$. 
\end{itemize}
\textbf{end while}  \\
\textbf{Output}: $\bm{x}^{k}$ 
\end{algorithm}

\subsection{Shadow-point mechanism for inexactness}

At each iteration, our inexact framework permits an \textit{approximate infeasible} solution of the subproblem \eqref{subpro-iAPG} under the error criterion \eqref{inexcond-iAPG}. Since the subproblem \eqref{subpro-iAPG} admits a unique solution $\bm{x}^{k,*}$ due to the strong convexity, criterion \eqref{inexcond-iAPG} is always attainable: it holds trivially at $\bm{x}^{k+1}=\widetilde{\bm{x}}^{k+1}=\bm{x}^{k,*}$ with $\|\Delta^k\|=\varepsilon_k=0$. In particular, setting $\eta_k=\nu_k=\mu_k=0$ in \eqref{inexcond-iAPG} forces $\bm{x}^{k+1}$ (and thus $\widetilde{\bm{x}}^{k+1}$) to coincide with the exact solution of the subproblem, in which case SpinAPG reduces to the classical exact APG method studied in \cite{bt2009a,cd2015convergence}. Moreover, one can observe that the error criterion \eqref{inexcond-iAPG} is of a two-point type, involving $\bm{x}^{k+1}$ and $\widetilde{\bm{x}}^{k+1}$. Since $\widetilde{\bm{x}}^{k+1}$ is introduced solely for constructing criterion \eqref{inexcond-iAPG} without requiring explicit computation, and does not participate in the iterate updates, we refer to it as a \textit{shadow point}. 

This two-point inexact mechanism is inspired by recent works on the inexact Bregman proximal point algorithm \cite{clty2023an,yt2022bregman} and the inexact Bregman proximal gradient method \cite{yt2025inexact}. However, SpinAPG incorporates several notable design differences that substantially improve its practical efficiency. First, in the inertial method of \cite{yt2025inexact}, the ``shadow" point $\widetilde{\bm{x}}^{k+1}$ explicitly participates in the iterate updates and therefore must be computed at each iteration, leading to additional computational overhead. In contrast, SpinAPG uses the shadow point solely as a theoretical tool for formulating and verifying the error criterion, without requiring its explicit computation during the updates, thereby leading to a more economical implementation. Second, while inertial variants were also developed in those works, they rely on a substantially more involved acceleration framework designed to accommodate Bregman proximal terms, which differs fundamentally from the classical and more streamlined acceleration scheme \eqref{pgescheme1}–\eqref{pgescheme2} considered in the present work.


At first glance, the shadow-point enhanced error criterion \eqref{inexcond-iAPG} may appear unconventional. Nevertheless, it provides a flexible and comprehensive framework for handling approximate solutions. Specifically, it captures inexactness through three key components: the error term $\Delta^{k}$, which appears on the left-hand-side of the optimality condition; the $\varepsilon_k$-subdifferential $\partial_{\varepsilon_k}P$, which serves as an approximation of the exact subdifferential $\partial P$; and the deviation $\|\widetilde{\bm{x}}^{k+1}-\bm{x}^{k+1}\|$, which measures the discrepancy between the computed (possibly infeasible) iterate and its associated feasible shadow-point. This formulation yields a highly adaptable inexact framework capable of accommodating various types of errors in solving the subproblem. Moreover, criterion \eqref{inexcond-iAPG} 
generalizes several existing error criteria. For example, the works \cite{acpr2018fast,ap2016rate} only consider the error term $\Delta^k$, corresponding to \eqref{inexcond-iAPG} with $\nu_k\equiv\mu_k\equiv0$. In contrast, \cite{vsbv2013accelerated} focuses solely on the approximate computation of the subdifferential of $P$, corresponding to \eqref{inexcond-iAPG} with $\eta_k\equiv\mu_k\equiv0$. Several other works (e.g., \cite{ad2015stability,bgk2023on,brr2018inertial,jst2012inexact}) incorporate both types of inexactness. Up to a scaling matrix, these approaches can be viewed as special cases of \eqref{inexcond-iAPG} with $\mu_k\equiv0$. Therefore, all examples used to verify the error criteria in these existing works remain valid within our inexact framework.

\subsection{Overcoming feasibility challenges via shadow-points}\label{sec-example}

A key advantage of the proposed SpinAPG over existing inexact APG methods (e.g., \cite{acpr2018fast,ap2016rate,ad2015stability,bgk2023on,brr2018inertial,srb2011convergence,vsbv2013accelerated}) is its potential to address feasibility challenges when dealing
with problems with complex feasible regions. Unlike traditional inexact APG methods, which typically require all iterates to remain strictly within the feasible region $\mathrm{dom}\,P$, SpinAPG permits the computed iterate $\bm{x}^{k+1}$ to be potentially \textit{infeasible}. To ensure convergence, an associated shadow point $\widetilde{\bm{x}}^{k+1}\in \mathrm{dom}\,P$ is introduced as a \textit{feasible} reference point in the error criterion and guides the evolution of $\bm{x}^{k+1}$. Importantly, under suitable error bound conditions on the feasible set, this shadow point does not need to be computed explicitly. As a result, SpinAPG can substantially reduce the computational overhead associated with enforcing feasibility, a task that can be particularly costly for large-scale constrained problems. The following example illustrates how this shadow-point enhanced inexact mechanism achieves a favorable balance between feasibility requirement and computational efficiency in practice.

Let $J\subseteq\{1,\ldots,n\}$ be an index set, with $|J|$ denoting its cardinality and $J^\complement = \{1,\ldots,n\}\backslash J$ denoting its complementarity. For a vector $\bm{x}\in\mathbb{R}^n$, let $\bm{x}_{J}\in\mathbb{R}^{|J|}$ and $\bm{x}_{J^\complement}\in\mathbb{R}^{|J^\complement|}$ denote the corresponding subvectors. Consider the convex function
\begin{equation*}
g(\bm{x}) := r(\bm{x}_J) + \delta_{\{\bm{x}_{J^\complement}\ge 0\}}(\bm{x}),
\end{equation*}
where $r:\mathbb{R}^{|J|}\to\mathbb{R}$ is a continuous convex function and $\delta_{\{\bm{x}_{J^\complement}\ge 0\}}$ is the indicator function of the set $\{\bm{x}\in\mathbb{R}^n:\bm{x}_{J^\complement}\ge 0\}$. Assume that, for any $\nu>0$, the proximal mapping of $\nu g$ is easy to compute. We then consider the composite regularizer
\begin{equation*}
P(\bm{x}) := g(\bm{x}) + \delta_{\{A\bm{x}=\bm{b}\}}(\bm{x}),
\end{equation*}
where $A\in\mathbb{R}^{m\times n}$, $\bm{b}\in \mathbb{R}^m$, and $\delta_{\{A\bm{x}=\bm{b}\}}$ is the indicator function of the set $\{\bm{x}\in\mathbb{R}^n:A\bm{x}=\bm{b}\}$. Problem \eqref{mainpro} with this choice of $P$ covers a broad class of block-regularized problems with affine constraints, with inequality constraints handled via standard slack-variable reformulations. Under this setting, the $k$th APG subproblem \eqref{subpro-iAPG} takes the form
\begin{equation}\label{eq:apg_sub_ex}
\min\limits_{\bm{x}\in\mathbb{R}^n}~ \left\{g(\bm{x})
+ \langle \nabla f(\bm{y}^k), \,\bm{x}-\bm{y}^k\rangle
+ \frac{L}{2}\|\bm{x}-\bm{y}^k\|^2 \ \middle\vert\  A\bm{x}=\bm{b} \right\},
\end{equation}
which can be efficiently solved on its dual side. Specifically, the dual problem of \eqref{eq:apg_sub_ex} can be given (in a minimization form) by
\begin{equation}
\min\limits_{\bm{z}\in \mathbb{R}^m}~~\Psi_k(\bm{z})
:=-\,\texttt{M}_{g/L}\big(A^{\top}\bm{z}/L - \nabla f(\bm{y}^k)/L + \bm{y}^k\big)
+ \frac{1}{2L}\big\|A^{\top}\bm{z}-\nabla f(\bm{y}^k)\big\|^2
+ \langle A\bm{y}^k-\bm{b},\,\bm{z}\rangle,\label{eq:def_dual}
\end{equation}
where $\bm{z}\in\mathbb{R}^m$ is the dual variable. By the property of the Moreau envelope (see, e.g.,
\cite[Proposition 12.29]{bc2011convex}), $\Psi_k$ is convex and continuously differentiable with the gradient:
\begin{equation*}
\nabla \Psi_k(\bm{z}) = A \texttt{prox}_{g/L}\left(A^{\top}\bm{z}/L - \nabla f(\bm{y}^k)/L + \bm{y}^k\right) - \bm{b}.
\end{equation*}
Thus, gradient-type methods, or when $\nabla \Psi_k$ is semi-smooth, the semi-smooth Newton ({\sc Ssn}) method (see, e.g., \cite[Section 3]{lst2020on}), can be employed to solve this dual problem efficiently.

Let $\{\bm{z}^{k,t}\}$ be a convergent dual sequence such that $\nabla \Psi_k(\bm{z}^{k,t})\to0$ as $t\to \infty$, and define the corresponding primal sequence by
\begin{equation}\label{eq:ssn_x}
\bm{x}^{k,t} := \texttt{prox}_{g/L}\left(A^{\top}\bm{z}^{k,t} /L - \nabla f(\bm{y}^k)/L +\bm{y}^k\right),
\end{equation}
which converges to the solution of the subproblem \eqref{eq:apg_sub_ex}. By construction, we know that $\bm{x}_{J^\complement}^{k,t}\geq0$, but $\bm{x}^{k,t}$ may not satisfy the affine constraint $A\bm{x}=\bm{b}$ and hence may be \textit{infeasible} for the subproblem \eqref{eq:apg_sub_ex}, i.e., $\bm{x}^{k,t}$ is not guaranteed to be in $\mathrm{dom}\,P=\{\bm{x}\in \mathbb{R}^n \mid A\bm{x}=\bm{b}, \,\bm{x}_{J^\complement}\geq 0 \}$. Within existing inexact APG frameworks \cite{acpr2018fast,ap2016rate,ad2015stability,bgk2023on,brr2018inertial,srb2011convergence,vsbv2013accelerated}, this typically requires additional projections onto $\mathrm{dom}\,P$ to obtain feasible iterates, which might be computationally expensive. In contrast, our inexact framework is capable of avoiding such projections.

To make this distinction concrete, we next explain how error criteria are verified for SpinAPG and representative inexact APG schemes: iAPG-SLB, an inexact accelerated proximal gradient method proposed by Schmidt et al. \cite{srb2011convergence}; AIFB, an accelerated inexact forward-backward algorithm proposed by Villa et al. \cite{vsbv2013accelerated}; o-iFB, an over-relaxation of the inertial forward-backward algorithm proposed by Aujol and Dossal \cite{ad2015stability}; and I-FISTA, an inexact fast iterative shrinkage/thresholding algorithm with a relative error rule proposed by Bello-Cruz et al. \cite{bgk2023on}.

\textbf{SpinAPG.} Let $\Pi_{\mathrm{dom}\,P}$ be the projection operator onto $\mathrm{dom}\,P$ and $G>0$ be any positive constant. By the Hoffman error bound theorem \cite{h1952on}, for any $\bm{x}^{k,t}$ defined in \eqref{eq:ssn_x}, by setting $\widetilde{\bm{x}}^{k,t}:=\Pi_{\mathrm{dom}\,P}(\bm{x}^{k,t})$, there exists a constant $\kappa>0$ such that
\begin{equation}\label{errorbd}
\|\widetilde{\bm{x}}^{k,t}-\bm{x}^{k,t}\|
=\|\Pi_{\mathrm{dom}\,P}(\bm{x}^{k,t})-\bm{x}^{k,t}\|
\leq \kappa\|A\bm{x}^{k,t}-\bm{b}\|
= \kappa \|\nabla\Psi_k(\bm{z}^{k,t})\|
< G,
\end{equation}
where the last inequality holds for sufficiently large $t$ since $\nabla \Psi_k(\bm{z}^{k,t})\to0$ as $t\to0$. Then, for any $\bm{x}\in\mathrm{dom}\,P$, we have
\begin{equation*}
\begin{aligned}
&\quad P(\widetilde{\bm{x}}^{k,t}) - P(\bm{x})
+ \langle - \nabla f(\bm{y}^k) - L(\bm{x}^{k,t} - \bm{y}^k), \,\bm{x} - \widetilde{\bm{x}}^{k,t}\rangle \\
&= g(\widetilde{\bm{x}}^{k,t}) - g(\bm{x})
+ \langle A^\top \bm{z}^{k,t}- \nabla f(\bm{y}^k)
- L(\bm{x}^{k,t} - \bm{y}^k), \,\bm{x} - \widetilde{\bm{x}}^{k,t}\rangle \\
&\leq  g(\widetilde{\bm{x}}^{k,t}) - g(\bm{x}^{k,t})
+ \langle A^\top \bm{z}^{k,t}- \nabla f(\bm{y}^k) - L(\bm{x}^{k,t} - \bm{y}^k), \,\bm{x}^{k,t} - \widetilde{\bm{x}}^{k,t}\rangle \\
&= r(\widetilde{\bm{x}}^{k,t}_J) - r(\bm{x}^{k,t}_J)
+ \langle A^\top \bm{z}^{k,t} - \nabla f(\bm{y}^k) - L(\bm{x}^{k,t} - \bm{y}^k), \,\bm{x}^{k,t} - \widetilde{\bm{x}}^{k,t} \rangle\\
&\leq \kappa \big(\ell_r^G + \|A^\top \bm{z}^{k,t} - \nabla f(\bm{y}^k) - L(\bm{x}^{k,t} - \bm{y}^k)\| \big)\|\nabla\Psi(\bm{z}^{k,t})\|,
\end{aligned}
\end{equation*}
where the first equality follows from $A\bm{x}=A\widetilde{\bm{x}}^{k,t}=\bm{b}$, the first inequality follows from the setting of $\bm{x}^{k,t}$ in \eqref{eq:ssn_x}, the second equality holds since $\widetilde{\bm{x}}^{k,t}_{J^\complement},\,\bm{x}^{k,t}_{J^\complement}\geq 0$, and the last inequality follows from \eqref{errorbd} and the fact that the convex function $r$ is Lipschitz continuous with a Lipschitz constant $\ell_r^G$ on a bounded set of diameter $G$ containing $\widetilde{\bm{x}}^{k,t}$ and $\bm{x}^{k,t}$. Consequently, we obtain the following approximate optimality condition
\begin{equation}\label{ssn_inclusion}
0 \in \partial_{\varepsilon_{k,t}}P(\widetilde{\bm{x}}^{k,t})
+ \nabla f(\bm{y}^k) + L\big(\bm{x}^{k,t}-\bm{y}^k\big),
\end{equation}
with $\varepsilon_{k,t} := \kappa \big(\ell_r^G + \|A^\top \bm{z}^{k,t} - \nabla f(\bm{y}^k) - L(\bm{x}^{k,t} - \bm{y}^k)\| \big)\|\nabla\Psi(\bm{z}^{k,t})\|$. Combining \eqref{ssn_inclusion} with \eqref{errorbd}, we see that the error criterion \eqref{inexcond-iAPG} is verifiable at $(\bm{x}^{k,t},\,\widetilde{\bm{x}}^{k,t})\in \mathbb{R}^n \times \mathrm{dom}\,P$ and holds whenever
\begin{equation}\label{checkingcond}
\left\{\begin{aligned}
\varepsilon_{k,t}
= \kappa \big(\ell_r^G + \|A^\top \bm{z}^{k,t} - \nabla f(\bm{y}^k) - L(\bm{x}^{k,t} - \bm{y}^k)\| \big)\|\nabla\Psi(\bm{z}^{k,t})\|
&\leq \nu_k:=\kappa(\ell_r^G+1)\tilde{\nu}_k,  \\[3pt]
\|\widetilde{\bm{x}}^{k,t}-\bm{x}^{k,t}\|
\leq \kappa \|\nabla\Psi_k(\bm{z}^{k,t})\|
&\leq \mu_k:=\kappa\min\{\tilde{\mu}_k, \,G\},
\end{aligned}\right.
\end{equation}
for prescribed tolerances $\tilde{\nu}_k,\,\tilde{\mu}_k>0$. In practical implementations, one may choose a sufficiently large constant $G>0$, in which case it suffices to verify
\begin{equation}\label{SpinAPG_prac_cond}
\left\{\begin{aligned}
&\|\nabla\Psi_k(\bm{z}^{k,t})\| \leq \min\left\{\tilde{\nu}_k,\,\tilde{\mu}_k\right\}, \\[3pt]
&\|A^\top \bm{z}^{k,t} - \nabla f(\bm{y}^k) - L(\bm{x}^{k,t} - \bm{y}^k)\|\|\nabla\Psi(\bm{z}^{k,t})\|
\leq \tilde{\nu}_k,
\end{aligned}\right.
\end{equation}
which readily implies \eqref{checkingcond}.

\textbf{iAPG-SLB} \cite{srb2011convergence} / \textbf{AIFB} \cite{vsbv2013accelerated}. To verify their error criteria \eqref{srb2011-cond} and \eqref{vsbv2013-cond}, one can follow \cite{srb2011convergence,vsbv2013accelerated} to compute the duality gap when solving the subproblem, and terminate the subsolver once the duality gap falls below a given tolerance. Specifically, at the $k$th iteration, the subsolver in iAPG-SLB and AIFB can be terminated when
\begin{equation*}
g(\widetilde{\bm{x}}^{k,t})
+ \langle \nabla f(\bm{y}^k), \,\widetilde{\bm{x}}^{k,t}-\bm{y}^k\rangle
+ \frac{L}{2}\|\widetilde{\bm{x}}^{k,t}-\bm{y}^k\|^2 + \Psi_k(\bm{z}^{k,t})
\leq \varepsilon_k.
\end{equation*}
It turns out that iAPG-SLB and AIFB share the same practical implementation for verifying their error criteria. Moreover, the verification of either \eqref{srb2011-cond} or \eqref{vsbv2013-cond} requires explicitly computing a \textit{feasible} point $\widetilde{\bm{x}}^{k,t}$, which also serves as the next proximal point.

\textbf{o-iFB} \cite{ad2015stability}. From \eqref{ssn_inclusion}, we further have that
\begin{equation}\label{oiFB-cond-general}
\Delta^{k,t}:=L(\widetilde{\bm{x}}^{k,t}-\bm{x}^{k,t})
\in\partial_{\varepsilon_{k,t}}P(\widetilde{\bm{x}}^{k,t})
+ \nabla f(\bm{y}^k) + L\big(\widetilde{\bm{x}}^{k,t}-\bm{y}^k\big).
\end{equation}
This relation implies that the error criterion (b) in \eqref{ad2015-cond} is verifiable at $\widetilde{\bm{x}}^{k,t}$ with a pair of errors $(\Delta^{k,t},\,\varepsilon_{k,t})$. Thus, one can varify the same inequalities as in \eqref{SpinAPG_prac_cond} to terminate the subsolver at the $k$th iteration in o-iFB. However, a \textit{feasible} point $\widetilde{\bm{x}}^{k,t}$ must be explicitly computed, which is used as the next proximal point.

\textbf{I-FISTA} \cite{bgk2023on}. First, we note that I-FISTA shares the same subproblem structure as \eqref{eq:apg_sub_ex}, but introduces an additional parameter $\tau\in(0,1)$ to enable inexact minimization under a relative-type error criterion. Specifically, at the $k$th iteration, its subproblem takes the form
\begin{equation*}
\min\limits_{\bm{x}\in\mathbb{R}^n}~ \left\{g(\bm{x})
+ \langle \nabla f(\bm{y}^k), \,\bm{x}-\bm{y}^k\rangle
+ \frac{L}{2\tau}\|\bm{x}-\bm{y}^k\|^2 \ \middle\vert\  A\bm{x}=\bm{b} \right\}.
\end{equation*}
Clearly, it can be efficiently handled on the dual side using the same procedure described above. When a dual-based subsolver produces an approximate solution $\bm{x}^{k,t}:=\texttt{prox}_{\tau g/L}\big(\tau A^{\top}\bm{z}^{k,t}/L - \tau\nabla f(\bm{y}^k)/L +\bm{y}^k\big)$, we use $\Pi_{\mathrm{dom}\,P}$ to obtain a feasible point by setting $\widetilde{\bm{x}}^{k,t}:=\Pi_{\mathrm{dom}\,P}(\bm{x}^{k,t})$. Then, using arguments analogous to those leading to \eqref{ssn_inclusion} and \eqref{oiFB-cond-general}, one can show that
\begin{equation*}
\Delta^{k,t}:=\frac{L}{\tau}(\widetilde{\bm{x}}^{k,t}-\bm{x}^{k,t})
\in\partial_{\varepsilon_{k,t}}P(\widetilde{\bm{x}}^{k,t})
+ \nabla f(\bm{y}^k) + \frac{L}{\tau}\big(\widetilde{\bm{x}}^{k,t}-\bm{y}^k\big),
\end{equation*}
where $\varepsilon_{k,t}:=r(\widetilde{\bm{x}}^{k,t}_J) - r(\bm{x}^{k,t}_J)
+ \langle A^\top \bm{z}^{k,t} - \nabla f(\bm{y}^k) - \frac{L}{\tau}(\bm{x}^{k,t} - \bm{y}^k), \,\bm{x}^{k,t} - \widetilde{\bm{x}}^{k,t} \rangle$. Thus, the error criterion \eqref{bgk2023on-cond} is verifiable at the triple $(\widetilde{\bm{x}}^{k,t},\Delta^{k,t},\varepsilon_{k,t})$. Specifically, at the $k$th iteration, the subsolver in I-FISTA can be terminated when
\begin{equation}\label{bgk2023on-cond-general}
\|\tau\Delta^{k,t}\|^2 + 2\tau L\varepsilon_{k,t} \leq L\big((1-\tau)L - c\tau\big) \| \widetilde{\bm{x}}^{k,t} - \bm{y}^{k}\|_F^2,
\end{equation}
where $\tau\in(0,1)$ and $c\in[0,\,L(1-\tau)/\tau]$.

In summary, different inexact APG mechanisms lead to different practically verifiable stopping conditions. A key practical distinction is that the proposed SpinAPG allows the algorithm to proceed with a potentially \textit{infeasible} iterate $\bm{x}^{k,t}$ without explicit projections onto $\mathrm{dom}\,P$. Such flexibility is typically absent in existing inexact APG frameworks, where feasibility must be enforced at every iteration.

Finally, we note that the inexact APG method developed in \cite{jst2012inexact} can also alleviate this feasibility issue by approximately solving a perturbed subproblem over an enlarged feasible set. While effective, this framework is relatively complex, tailored to linearly constrained convex semidefinite programs, and does not accommodate general nonsmooth regularizers. In contrast, our shadow-point enhanced error criterion is more natural, flexible, and broadly applicable. Moreover, as shown later, under weaker assumptions, SpinAPG admits stronger theoretical guarantees, including an improved $o(1/k^2)$ convergence rate for the objective function values as well as the iterate convergence, both of which were not addressed in \cite{jst2012inexact}.

\section{Convergence analysis}\label{sec-convana}

In this section, we study the convergence properties of SpinAPG in Algorithm \ref{algo-SpinAPG}, showing that it preserves all desirable convergence guarantees of the classical APG method under suitable error conditions. Our analysis builds upon insights from existing works on the APG method and its inexact variants (e.g., \cite{ad2015stability,bpr2016variable,brr2018inertial,cd2015convergence,jst2012inexact,srb2011convergence,t2008on}). A key challenge in our analysis arises from the introduction of the shadow-point enhanced error criterion \eqref{inexcond-iAPG}, which provides greater flexibility in handling inexactness at each iteration. While this flexibility enhances the adaptability of the APG method, it also complicates the theoretical analysis by requiring a more refined treatment of the error terms and their impact on the convergence behavior. To address this, we extend existing proof techniques and develop a generalized analytical framework that accommodates three distinct error components: the error term $\Delta^{k}$, the $\varepsilon_k$-subdifferential $\partial_{\varepsilon_k}P$, and the deviation $\|\widetilde{\bm{x}}^{k+1}-\bm{x}^{k+1}\|$. These error terms are carefully controlled through appropriate recursive bounds. Our analysis ultimately demonstrates that SpinAPG remains effective and achieves a convergence rate consistent with that of the classical APG method, provided that these error terms are properly managed.

We now start our convergence analysis by making some blanket technical assumptions and gathering three technical lemmas. The proofs of the last two lemmas are provided in Appendices \ref{proof-lem-thetasumbd} and \ref{proof-lem-recursion}, respectively.

\begin{assumption}\label{assumA}
The functions $P$ and $f$ satisfy the following assumptions.
\begin{itemize}[leftmargin=0.9cm]
\item[{\bf A1.}] $P: \mathbb{E}\rightarrow\mathbb{R}\cup\{+\infty\}$ is a proper closed convex (possibly nonsmooth) function.

\item[{\bf A2.}] $f: \mathbb{E} \rightarrow \mathbb{R}$ is a continuously differentiable convex function with a Lipschitz continuous gradient, i.e., there exists a constant $L > 0$ such that $\|\nabla f(\bm{x}) - \nabla f(\bm{y})\| \leq L \|\bm{x}- \bm{y}\|$ for any $\bm{x}, \,\bm{y} \in \mathbb{E}$.

\end{itemize}
\end{assumption}

\begin{lemma}[{\cite[Section 2.2]{p1987introduction}}]\label{lemseqcon}
Suppose that $\{\alpha_k\}_{k=0}^{\infty}\subseteq\mathbb{R}$ and $\{\gamma_k\}_{k=0}^{\infty}\subseteq\mathbb{R}_+$ are two sequences such that $\{\alpha_k\}$ is bounded from below, $\sum_{k=0}^{\infty} \gamma_k < \infty$, and $\alpha_{k+1} \leq \alpha_{k} + \gamma_k$ holds for all $k\geq0$. Then, $\{\alpha_k\}$ is convergent.
\end{lemma}

\begin{lemma}\label{lem-thetasumbd}
For any sequence $\{\theta_k\}_{k=-1}^{\infty}\subseteq(0,1]$ with $\theta_{-1}=\theta_0=1$ satisfying conditions \eqref{condtheta1} and \eqref{condtheta}, we have that $\theta_k\geq\frac{1}{k+ 2 }$ and $\sum^k_{i=0}\theta_{i-1}^{-2} - \theta_{i}^{-2}(1-\theta_{i}) \leq 2 + \frac{(k+3)^{2}}{2}$ for all $k\geq0$.
\end{lemma}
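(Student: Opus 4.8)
The statement has two parts, and I would handle them separately, with the first feeding into the second. The plan is as follows.

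\textbf{Part 1: the lower bound $\theta_k \geq \frac{1}{k+2}$.} I would argue by induction on $k$. For the base cases $k=-1,0$ the bound holds since $\theta_{-1}=\theta_0=1$. For the inductive step, I would rewrite condition \eqref{condtheta} as $\frac{1-\theta_k}{\theta_k^2} \leq \frac{1}{\theta_{k-1}^2}$. Set $a_k := 1/\theta_k$. Then \eqref{condtheta} becomes $a_k^2 - a_k \leq a_{k-1}^2$, equivalently $a_k \leq \tfrac12 + \sqrt{a_{k-1}^2 + \tfrac14}$ (taking the positive root, valid since $a_k \geq 1 > 0$). Using $\sqrt{a_{k-1}^2+\tfrac14} \leq a_{k-1} + \tfrac14\cdot\frac{1}{a_{k-1}} \le a_{k-1}+\tfrac14$ (the last step because $a_{k-1}\ge 1$), I get $a_k \leq a_{k-1} + \tfrac12 + \tfrac14 = a_{k-1} + \tfrac34$; that is too lossy. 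A cleaner route: from $a_k^2 - a_k \le a_{k-1}^2$ and the inductive hypothesis $a_{k-1} \le k+1$, I want $a_k \le k+2$. It suffices to check that $x \mapsto x^2 - x$ evaluated at $k+2$ is at least $(k+1)^2$, i.e. $(k+2)^2-(k+2) = k^2+3k+2 \ge k^2+2k+1 = (k+1)^2$, which holds for $k \ge -1$. Since $x^2-x$ is increasing for $x \ge 1$ and $a_k \ge 1$, the inequality $a_k^2 - a_k \le a_{k-1}^2 \le (k+1)^2 \le (k+2)^2-(k+2)$ forces $a_k \le k+2$, i.e. $\theta_k \ge \frac{1}{k+2}$.

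\textbf{Part 2: the telescoping sum bound.} The key observation is that the summand $\frac{1}{\theta_{i-1}^2} - \frac{1-\theta_i}{\theta_i^2}$ is, by \eqref{condtheta}, nonnegative, and more importantly it telescopes against $\frac{1}{\theta_i^2}$: indeed $\frac{1-\theta_i}{\theta_i^2} = \frac{1}{\theta_i^2} - \frac{1}{\theta_i}$, so
\[
\frac{1}{\theta_{i-1}^2} - \frac{1-\theta_i}{\theta_i^2} = \frac{1}{\theta_{i-1}^2} - \frac{1}{\theta_i^2} + \frac{1}{\theta_i}.
\]
Summing from $i=0$ to $k$, the first two terms telescope to $\frac{1}{\theta_{-1}^2} - \frac{1}{\theta_k^2} = 1 - \frac{1}{\theta_k^2}$, leaving
\[
\sum_{i=0}^k \left(\frac{1}{\theta_{i-1}^2} - \frac{1-\theta_i}{\theta_i^2}\right) = 1 - \frac{1}{\theta_k^2} + \sum_{i=0}^k \frac{1}{\theta_i}.
\]
Now I would drop the negative term $-1/\theta_k^2 \le 0$ and bound $\sum_{i=0}^k \frac{1}{\theta_i}$ from above. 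Here I invoke condition \eqref{condtheta1}: $\theta_i \le \frac{\alpha-1}{i+\alpha-1}$ for $i \ge 1$ (with $\alpha \ge 3$), hence $\frac{1}{\theta_i} \ge \frac{i+\alpha-1}{\alpha-1} \ge \frac{i+2}{2}$ — wait, that is a lower bound, the wrong direction. I need an \emph{upper} bound on $1/\theta_i$, which requires a lower bound on $\theta_i$ — precisely Part 1: $\theta_i \ge \frac{1}{i+2}$, so $\frac{1}{\theta_i} \le i+2$. Therefore $\sum_{i=0}^k \frac{1}{\theta_i} \le \sum_{i=0}^k (i+2) = \frac{(k+1)(k+2)}{2} + (k+1) = \frac{(k+1)(k+4)}{2}$. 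Adding the leading $1$, I get the total is at most $1 + \frac{(k+1)(k+4)}{2}$, and I would then check the crude estimate $1 + \frac{(k+1)(k+4)}{2} \le 2 + \frac{(k+3)^2}{2}$, i.e. $(k+1)(k+4) \le 2 + (k+3)^2 = k^2+6k+11$, i.e. $k^2+5k+4 \le k^2+6k+11$, i.e. $0 \le k+7$, true for all $k\ge 0$. This closes the bound.

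\textbf{Main obstacle.} The only genuinely delicate point is Part 1: getting the \emph{sharp} lower bound $\theta_k \ge \frac{1}{k+2}$ rather than a weaker $\Omega(1/k)$ bound, since the telescoping argument in Part 2 needs exactly this to land the stated constant. The inductive step hinges on the monotonicity of $x \mapsto x^2 - x$ on $[1,\infty)$ combined with the algebraic identity $(k+2)^2-(k+2) \ge (k+1)^2$; I would state this carefully. Part 2 is then essentially bookkeeping — the telescoping identity plus the summation $\sum (i+2)$ plus one routine polynomial comparison — and I do not anticipate difficulty there. I should also double-check that condition \eqref{condtheta1} is not actually needed in Lemma \ref{lem-thetasumbd} beyond guaranteeing $\{\theta_k\} \subseteq (0,1]$; indeed the argument above uses only \eqref{condtheta}, so I would note that \eqref{condtheta1} can be omitted from the hypotheses or simply left in for consistency with the rest of the paper.
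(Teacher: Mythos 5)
Your proposal is correct and follows essentially the same route as the paper: induction to get $\theta_k\geq\frac{1}{k+2}$, then the telescoping identity $\frac{1}{\theta_{i-1}^2}-\frac{1-\theta_i}{\theta_i^2}=\frac{1}{\theta_{i-1}^2}-\frac{1}{\theta_i^2}+\frac{1}{\theta_i}$ combined with $1/\theta_i\leq i+2$ and a routine polynomial comparison. The only cosmetic difference is the inductive step, where the paper deduces $1/\theta_{k+1}\leq 1/\theta_k+1$ from $(1-\theta_{k+1})^2\leq 1-\theta_{k+1}$ and \eqref{condtheta} rather than your monotonicity-of-$x^2-x$ argument, and your observation that only \eqref{condtheta} and $\theta_k\in(0,1]$ are used (not \eqref{condtheta1}) is likewise consistent with the paper's proof.
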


\begin{lemma}\label{lem-recursion}
Suppose that $\{a_k\}_{k=0}^{\infty}$, $\{q_k\}_{k=0}^{\infty}$, $\{c_k\}_{k=0}^{\infty}$, $\{\lambda_k\}_{k=0}^{\infty}$ and $\{\widetilde{\lambda}_k\}_{k=0}^{\infty}$ are five sequences of nonnegative numbers, and satisfy the following recursion:
\begin{equation*}
a_{k+1}^2 \leq q_k + {\textstyle\sum^k_{i=0}}\big(\lambda_ia_{i+1}+\widetilde{\lambda}_{i}a_i+c_i\big), \quad \forall\,k\geq0.
\end{equation*}
Then, for all $k\geq0$, we have that $a_{k+1} \leq \frac{1}{2} P_k + \sqrt{Q_k + \sum^k_{i=0}c_i + \big(\frac{1}{2}P_k\big)^2}$,
and
\begin{equation*}
\begin{aligned}
&\quad {\textstyle\sum^k_{i=0}}\big(\lambda_ia_{i+1}+\widetilde{\lambda}_{i}a_i+c_i\big)
\;\leq P_k^2\;
+ {\textstyle\sum^k_{i=0}}(\lambda_i+\widetilde{\lambda}_{i})\,Q_i^{\frac{1}{2}}
+ P_k
\big({\textstyle\sum^k_{i=0}}c_i\big)^{\frac{1}{2}}
+ {\textstyle\sum^k_{i=0}}c_i,
\end{aligned}
\end{equation*}
where $Q_k:=\max\limits_{0\leq i \leq k}\big\{q_i\big\}$ and $P_k := \sum^k_{i=0}\big(\lambda_i+\widetilde{\lambda}_{i}\big)$.
\end{lemma}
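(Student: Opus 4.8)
\textbf{Proof proposal for Lemma \ref{lem-recursion}.}

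The plan is to use a standard ``maximal-iterate'' trick to convert the cumulative recursion into a quadratic inequality in a single scalar, and then bound the sum afterward. First I would fix $k\geq0$ and define $A_k := \max_{0\leq i\leq k} a_{i+1}$, choosing $m\in\{0,\dots,k\}$ so that $a_{m+1} = A_k$. Applying the hypothesized recursion at index $m$ and bounding $a_{i+1}\leq A_k$, $a_i \leq A_k$ (using $a_0 \leq A_k$ as well, since $a_0 = a_{-1+1}$ is among the first terms — or, if not, simply noting $a_0$ can be absorbed; I would check the indexing carefully here) and $q_m \leq Q_k$, I get
\begin{equation*}
A_k^2 \;\leq\; Q_k + {\textstyle\sum_{i=0}^m}\big(\lambda_i + \widetilde{\lambda}_i\big) A_k + {\textstyle\sum_{i=0}^m} c_i \;\leq\; Q_k + P_k A_k + {\textstyle\sum_{i=0}^k} c_i,
\end{equation*}
where in the last step I use that all $\lambda_i,\widetilde{\lambda}_i,c_i$ are nonnegative so extending the sums to $k$ only increases the right-hand side, and that $\sum_{i=0}^m(\lambda_i+\widetilde\lambda_i)\leq P_k$. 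This is a quadratic inequality $A_k^2 - P_k A_k - (Q_k + \sum_{i=0}^k c_i) \leq 0$, so $A_k$ is at most the larger root: $A_k \leq \frac12 P_k + \sqrt{(\frac12 P_k)^2 + Q_k + \sum_{i=0}^k c_i}$. Since $a_{k+1} \leq A_k$, this gives the first claimed bound directly.

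For the second bound, I would plug the estimate $a_{i+1}\leq A_i$ and $a_i \leq A_{i-1} \leq A_i$ (with $a_0$ handled by $a_0 \leq A_0$ or the convention $A_{-1}$) back into the sum $\sum_{i=0}^k(\lambda_i a_{i+1} + \widetilde\lambda_i a_i + c_i)$, obtaining $\sum_{i=0}^k(\lambda_i+\widetilde\lambda_i) A_i + \sum_{i=0}^k c_i$. Now I use the subadditivity of $\sqrt{\cdot}$: from the first part, $A_i \leq \frac12 P_i + \frac12 P_i + \big(Q_i + \sum_{j=0}^i c_j\big)^{1/2} = P_i + \big(Q_i + \sum_{j=0}^i c_j\big)^{1/2} \leq P_i + Q_i^{1/2} + \big(\sum_{j=0}^k c_j\big)^{1/2}$, using monotonicity of $P_i\leq P_k$ and $\sqrt{x+y}\leq\sqrt x+\sqrt y$. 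Substituting and bounding $P_i \leq P_k$ inside the weighted sum, $\sum_{i=0}^k(\lambda_i+\widetilde\lambda_i)P_i \leq P_k \sum_{i=0}^k(\lambda_i+\widetilde\lambda_i) = P_k^2$, and $\sum_{i=0}^k(\lambda_i+\widetilde\lambda_i)\big(\sum_{j=0}^k c_j\big)^{1/2} = P_k\big(\sum_{i=0}^k c_i\big)^{1/2}$, which together with the $\sum_{i=0}^k(\lambda_i+\widetilde\lambda_i)Q_i^{1/2}$ term and the leftover $\sum_{i=0}^k c_i$ yields exactly the stated inequality.

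The main obstacle I anticipate is purely bookkeeping rather than conceptual: making the index shifts consistent so that every $a_i$ appearing on the right is genuinely dominated by the running maximum $A_k$ (in particular handling $a_0$, which has no ``$+1$'' shift), and making sure the running maximum argument is applied at the right index $m\leq k$ so that only $P_m \leq P_k$ and $\sum_{i=0}^m c_i \leq \sum_{i=0}^k c_i$ are invoked. A secondary point requiring a little care is that the first-part bound must be stated with $Q_i$ and $\sum_{j=0}^i c_j$ (the partial quantities at step $i$) and only afterward relaxed to $Q_i^{1/2}$ plus the full $\big(\sum_{j=0}^k c_j\big)^{1/2}$; doing the relaxation in the wrong order would lose the clean form. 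Neither difficulty is deep, so I expect the proof to be short once the indexing conventions are pinned down.
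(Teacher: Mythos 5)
Your overall route is the paper's route: a running maximum converts the cumulative recursion into a single quadratic inequality $A^2 \le Q_k + \sum_{i=0}^k c_i + P_k A$, the larger root gives the first bound, and the second bound is obtained by substituting the index-$i$ version of the first bound into $\sum_{i=0}^k(\lambda_i a_{i+1}+\widetilde{\lambda}_i a_i+c_i)$, using $\sqrt{x+y}\le\sqrt{x}+\sqrt{y}$, $P_i\le P_k$ and $\sum_{j\le i}c_j\le\sum_{j\le k}c_j$, exactly as you outline. Up to whether one evaluates the recursion at the maximizing index $m$ (your version) or bounds every $a_{\ell+1}^2$ and then takes the maximum (the paper's version), the two arguments are the same.

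The one genuine gap is the $a_0$ bookkeeping you flagged but left open, and neither of your proposed escapes works as written. With your definition $A_k=\max_{0\le i\le k}a_{i+1}=\max\{a_1,\dots,a_{k+1}\}$, the term $\widetilde{\lambda}_0 a_0$ on the right-hand side is not dominated by $\widetilde{\lambda}_0 A_k$: $a_0$ is simply not among the maximands ($a_0\ne a_{i+1}$ for any $i\ge0$ in your index range), and the recursion imposes no bound on $a_0$ whatsoever, so it cannot be ``absorbed'' for free; similarly $a_i\le A_{i-1}$ fails at $i=0$ in your second part. The paper's fix on the right-hand side is to define the maximum inclusively, $A_k:=\max_{0\le i\le k}a_i$, so that every $a_i$ and $a_{i+1}$ in the sum is bounded by $A_{k+1}$; the quadratic/root argument is then applied to $A_{k+1}$ and the first claim follows from $a_{k+1}\le A_{k+1}$, while the second part uses $\lambda_i a_{i+1}+\widetilde{\lambda}_i a_i\le(\lambda_i+\widetilde{\lambda}_i)A_{i+1}$ with this inclusive maximum. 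So the concrete repair to your write-up is to include $a_0$ in the running maximum (or keep your definition and add a case distinction for when $a_0$ dominates all later terms). Note, however, that your unease points at a looseness the paper itself glosses over: taking the maximum of the left-hand sides only controls $\max\{a_1,\dots,a_{k+1}\}$, so identifying it with the inclusive $A_{k+1}$ tacitly requires $a_0\le\max_{1\le j\le k+1}a_j$ or some a priori control of $a_0$ (e.g., $a_0^2\le q_0$); absent any such control the stated conclusions cannot hold, since the hypothesis lets the right-hand side grow through $\widetilde{\lambda}_0 a_0$ while the bounds are independent of $a_0$. This is harmless where the lemma is invoked in the paper ($a_0^2\le q_0$ in the first application, $a_0=0$ in the second), but a standalone proof needs the inclusive maximum together with a word about $a_0$ to be airtight.
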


\subsection{The ${\cal O}(1/k^2)$ convergence rate}\label{sec-convana-1}

In this subsection, we show that SpinAPG achieves the expected ${\cal O}(1/k^2)$ convergence rate for the objective function values. For simplicity, we define the following quantities:
\begin{equation}\label{defnots}
\begin{aligned}
&e(\bm{x}):=F(\bm{x})-F^*, ~~\forall\,\bm{x}\in\mathrm{dom}\,P, \\
&\vartheta_k:=\theta_{k-1}^{-2}-\theta_{k}^{-2}(1-\theta_{k}), \quad \overline{\vartheta}_k:={\textstyle\sum^{k}_{i=0}}\,\vartheta_i, \quad \forall\, k\geq 0,\\
&\delta_k:=\theta_k^{-1}\eta_k+2(\alpha-1)L\theta_{k-1}^{-1}\mu_{k-1}, \quad
\overline{\delta}_k:={\textstyle\sum^{k}_{i=0}}\,\delta_i, \quad \forall\, k\geq 0,\\
&\xi_k:=\theta_k^{-2}\big({\textstyle\frac{L}{2}}\mu_k^2+\eta_k(\mu_k+\mu_{k-1})+\nu_k\big), \quad
\overline{\xi}_k:={\textstyle\sum^{k}_{i=0}}\,\xi_i,\quad \forall\, k\geq 0,
\end{aligned}
\end{equation}
where $F^*:=\inf_{\bm{x}\in\mathbb{E}}\left\{F(\bm{x})\right\}$.
Clearly, $e(\bm{x})\geq0$ for any $\bm{x}\in\mathrm{dom}\,P$ and $\vartheta_k\geq0$ for any $k\geq0$. For consistency, we set $\mu_{-1}=1$ and allow $\widetilde{\bm{x}}^{0}\in\mathrm{dom}\,P$ to be arbitrarily chosen.\footnote{The setting of $\mu_{-1}$ and $\widetilde{\bm{x}}^{0}$ is only used for theoretical analysis, and is not needed in practical implementations.} We then establish an approximate sufficient descent property, followed by a key recursive relation.

\begin{lemma}[\textbf{Approximate sufficient descent property}]\label{lem-appsuffdes}
Suppose that Assumption \ref{assumA} holds.
Let $\{\bm{x}^k\}$, $\{\widetilde{\bm{x}}^k\}$ and $\{\bm{y}^k\}$ be the sequences generated by SpinAPG in Algorithm \ref{algo-SpinAPG}. Then, for any $k\geq0$ and any $\bm{x}\in\mathrm{dom}\,P$,
\begin{equation}\label{suffdes1-iAPG}
F(\widetilde{\bm{x}}^{k+1}) - F(\bm{x})
\leq \frac{L}{2}\|\bm{x}-\bm{y}^{k}\|^2
- \frac{L}{2}\|\bm{x}-\bm{x}^{k+1}\|^2
+ \eta_k\|\widetilde{\bm{x}}^{k+1}-\bm{x}\|
+ \frac{L}{2}\mu_k^2 + \nu_k.
\end{equation}
\end{lemma}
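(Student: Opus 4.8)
The plan is to derive \eqref{suffdes1-iAPG} by exploiting the optimality-type information encoded in the error criterion \eqref{inexcond-iAPG} together with convexity of $P$ and the descent lemma for $f$. First I would rewrite \eqref{inexcond-iAPG} as saying that $\Delta^k - \nabla f(\bm{y}^k) - L(\bm{x}^{k+1}-\bm{y}^k) \in \partial_{\varepsilon_k}P(\widetilde{\bm{x}}^{k+1})$, and use the definition of the $\varepsilon_k$-subdifferential to obtain, for every $\bm{x}\in\mathrm{dom}\,P$,
\begin{equation*}
P(\bm{x}) \geq P(\widetilde{\bm{x}}^{k+1}) + \langle \Delta^k - \nabla f(\bm{y}^k) - L(\bm{x}^{k+1}-\bm{y}^k), \,\bm{x}-\widetilde{\bm{x}}^{k+1}\rangle - \varepsilon_k.
\end{equation*}
Next, I would bound $f(\widetilde{\bm{x}}^{k+1})$ from above. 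By the descent lemma (Assumption A2), $f(\widetilde{\bm{x}}^{k+1}) \leq f(\bm{y}^k) + \langle \nabla f(\bm{y}^k), \,\widetilde{\bm{x}}^{k+1}-\bm{y}^k\rangle + \frac{L}{2}\|\widetilde{\bm{x}}^{k+1}-\bm{y}^k\|^2$, and by convexity of $f$, $f(\bm{y}^k) \leq f(\bm{x}) - \langle \nabla f(\bm{y}^k), \,\bm{x}-\bm{y}^k\rangle$. Adding these and combining with the $P$-inequality above gives an upper bound on $F(\widetilde{\bm{x}}^{k+1}) - F(\bm{x})$ in terms of $\langle \nabla f(\bm{y}^k), \,\widetilde{\bm{x}}^{k+1}-\bm{x}\rangle$-type terms (which should cancel), the quadratic $\frac{L}{2}\|\widetilde{\bm{x}}^{k+1}-\bm{y}^k\|^2$, the cross term $-L\langle \bm{x}^{k+1}-\bm{y}^k, \,\bm{x}-\widetilde{\bm{x}}^{k+1}\rangle$, the error term $\langle \Delta^k, \,\bm{x}-\widetilde{\bm{x}}^{k+1}\rangle$, and $\varepsilon_k$.

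The core algebraic step is then to massage the $L$-dependent quadratic terms into the telescoping form $\frac{L}{2}\|\bm{x}-\bm{y}^k\|^2 - \frac{L}{2}\|\bm{x}-\bm{x}^{k+1}\|^2$. If one had $\widetilde{\bm{x}}^{k+1}=\bm{x}^{k+1}$, this would be the standard three-point identity
\begin{equation*}
\tfrac{L}{2}\|\bm{x}^{k+1}-\bm{y}^k\|^2 - L\langle \bm{x}^{k+1}-\bm{y}^k, \,\bm{x}-\bm{x}^{k+1}\rangle = \tfrac{L}{2}\|\bm{x}-\bm{y}^k\|^2 - \tfrac{L}{2}\|\bm{x}-\bm{x}^{k+1}\|^2 - \tfrac{L}{2}\|\bm{x}^{k+1}-\bm{y}^k\|^2 + \tfrac{L}{2}\|\bm{x}^{k+1}-\bm{y}^k\|^2,
\end{equation*}
i.e. exactly $\frac{L}{2}\|\bm{x}-\bm{y}^k\|^2 - \frac{L}{2}\|\bm{x}-\bm{x}^{k+1}\|^2$. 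So the plan is to write $\widetilde{\bm{x}}^{k+1} = \bm{x}^{k+1} + (\widetilde{\bm{x}}^{k+1}-\bm{x}^{k+1})$ everywhere it appears in the quadratic and cross terms, expand, isolate the clean telescoping part, and collect all remaining terms as error contributions controlled by $\|\widetilde{\bm{x}}^{k+1}-\bm{x}^{k+1}\|\leq\mu_k$. Concretely, $\frac{L}{2}\|\widetilde{\bm{x}}^{k+1}-\bm{y}^k\|^2 = \frac{L}{2}\|\bm{x}^{k+1}-\bm{y}^k\|^2 + L\langle \bm{x}^{k+1}-\bm{y}^k,\,\widetilde{\bm{x}}^{k+1}-\bm{x}^{k+1}\rangle + \frac{L}{2}\|\widetilde{\bm{x}}^{k+1}-\bm{x}^{k+1}\|^2$, and $-L\langle \bm{x}^{k+1}-\bm{y}^k,\,\bm{x}-\widetilde{\bm{x}}^{k+1}\rangle = -L\langle \bm{x}^{k+1}-\bm{y}^k,\,\bm{x}-\bm{x}^{k+1}\rangle + L\langle \bm{x}^{k+1}-\bm{y}^k,\,\widetilde{\bm{x}}^{k+1}-\bm{x}^{k+1}\rangle$; the two $L\langle \bm{x}^{k+1}-\bm{y}^k,\,\widetilde{\bm{x}}^{k+1}-\bm{x}^{k+1}\rangle$ terms combine, and I would then handle them — this is where care is needed.

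\textbf{Main obstacle.} The delicate point is bounding the leftover inner products involving $\bm{x}^{k+1}-\bm{y}^k$ and $\widetilde{\bm{x}}^{k+1}-\bm{x}^{k+1}$ \emph{without} reintroducing a $\|\bm{x}^{k+1}-\bm{y}^k\|$ term (which is not present in the claimed bound and cannot be discarded since it has the "wrong" sign once the $-\frac{L}{2}\|\bm{x}^{k+1}-\bm{y}^k\|^2$ from the telescoping identity is used up against the $+\frac{L}{2}\|\bm{x}^{k+1}-\bm{y}^k\|^2$ from the descent lemma). The trick is to rewrite $\bm{x}^{k+1}-\bm{y}^k$ using the error relation — specifically $L(\bm{x}^{k+1}-\bm{y}^k) = \Delta^k - \nabla f(\bm{y}^k) - \bm{d}^k$ where $\bm{d}^k\in\partial_{\varepsilon_k}P(\widetilde{\bm{x}}^{k+1})$ — is not directly helpful either; instead I expect one reorganizes by pairing the residual cross terms with the gradient terms so that $2L\langle \bm{x}^{k+1}-\bm{y}^k,\,\widetilde{\bm{x}}^{k+1}-\bm{x}^{k+1}\rangle$ gets absorbed, after using the $P$-subgradient inequality at the point $\bm{x}^{k+1}$ in place of (or in addition to) $\bm{x}$, into terms of the form $\eta_k\|\widetilde{\bm{x}}^{k+1}-\bm{x}\|$, $\eta_k(\mu_k+\mu_{k-1})$, $\frac{L}{2}\mu_k^2$, and $\nu_k$ — matching exactly the right-hand side of \eqref{suffdes1-iAPG}. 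I would apply Cauchy–Schwarz on $\langle \Delta^k,\,\bm{x}-\widetilde{\bm{x}}^{k+1}\rangle \leq \eta_k\|\widetilde{\bm{x}}^{k+1}-\bm{x}\|$, bound the residual quadratic in $\widetilde{\bm{x}}^{k+1}-\bm{x}^{k+1}$ by $\frac{L}{2}\mu_k^2$, and absorb the remaining $\langle \Delta^k,\,\cdot\rangle$ and gradient cross terms against $\mu_k,\mu_{k-1}$ using $\|\Delta^k\|\leq\eta_k$ and $\varepsilon_k\leq\nu_k$; keeping track of which residual lands where, and verifying no $\|\bm{x}^{k+1}-\bm{y}^k\|$ survives, is the crux of the argument.
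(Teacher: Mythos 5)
Your first half is exactly the paper's argument: extract $\bm{d}^{k+1}\in\partial_{\varepsilon_k}P(\widetilde{\bm{x}}^{k+1})$ from \eqref{inexcond-iAPG}, write the $\varepsilon_k$-subgradient inequality at an arbitrary $\bm{x}\in\mathrm{dom}\,P$, add the descent lemma and convexity of $f$ so that the $\langle\nabla f(\bm{y}^k),\cdot\rangle$ terms cancel, and bound $\langle\Delta^k,\bm{x}-\widetilde{\bm{x}}^{k+1}\rangle$ and $\varepsilon_k$ by $\eta_k\|\widetilde{\bm{x}}^{k+1}-\bm{x}\|$ and $\nu_k$. The gap is in the ``main obstacle'' you then describe: it is an artifact of a sign slip, and because of it you never actually close the algebraic step, instead gesturing at absorptions (a subgradient inequality at $\bm{x}^{k+1}$, terms of the form $\eta_k(\mu_k+\mu_{k-1})$) that are either unavailable or do not match \eqref{suffdes1-iAPG}. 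The cross term produced by the $\varepsilon_k$-subgradient inequality is $+L\langle\bm{x}^{k+1}-\bm{y}^k,\,\bm{x}-\widetilde{\bm{x}}^{k+1}\rangle$, not $-L\langle\cdot,\cdot\rangle$ as in your outline. With the correct sign, your own decomposition $\widetilde{\bm{x}}^{k+1}=\bm{x}^{k+1}+\bm{r}$, $\bm{r}:=\widetilde{\bm{x}}^{k+1}-\bm{x}^{k+1}$, finishes the proof immediately: writing $\bm{v}:=\bm{x}^{k+1}-\bm{y}^k$,
\begin{equation*}
\frac{L}{2}\|\bm{v}+\bm{r}\|^2 + L\langle\bm{v},\,\bm{x}-\widetilde{\bm{x}}^{k+1}\rangle
= \frac{L}{2}\|\bm{v}\|^2 + L\langle\bm{v},\,\bm{x}-\bm{x}^{k+1}\rangle + \frac{L}{2}\|\bm{r}\|^2
= \frac{L}{2}\|\bm{x}-\bm{y}^k\|^2 - \frac{L}{2}\|\bm{x}-\bm{x}^{k+1}\|^2 + \frac{L}{2}\|\bm{r}\|^2,
\end{equation*}
because the two terms $L\langle\bm{v},\bm{r}\rangle$ cancel exactly rather than ``combine'' as you feared; no $\|\bm{x}^{k+1}-\bm{y}^k\|$ survives, and $\frac{L}{2}\|\bm{r}\|^2\leq\frac{L}{2}\mu_k^2$ gives precisely \eqref{suffdes1-iAPG}. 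This is equivalent to the paper's one-line four-point identity $\langle\bm{x}^{k+1}-\bm{y}^k,\bm{x}-\widetilde{\bm{x}}^{k+1}\rangle=\frac{1}{2}\|\bm{x}-\bm{y}^k\|^2-\frac{1}{2}\|\bm{x}-\bm{x}^{k+1}\|^2-\frac{1}{2}\|\widetilde{\bm{x}}^{k+1}-\bm{y}^k\|^2+\frac{1}{2}\|\widetilde{\bm{x}}^{k+1}-\bm{x}^{k+1}\|^2$, in which the $-\frac{L}{2}\|\widetilde{\bm{x}}^{k+1}-\bm{y}^k\|^2$ piece cancels the descent-lemma quadratic.

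Two further cautions about the remedies you floated: invoking the $P$-subgradient inequality ``at the point $\bm{x}^{k+1}$'' is useless here, since $\bm{x}^{k+1}$ may lie outside $\mathrm{dom}\,P$ (that is the whole point of the shadow-point criterion), so $P(\bm{x}^{k+1})$ may be $+\infty$ and the inequality carries no information; and the terms $\eta_k(\mu_k+\mu_{k-1})$ do not belong in this lemma at all — they only enter later, in the recursion of Lemma \ref{lem-suffdes-iAPG} through the quantity $\xi_k$ defined in \eqref{defnots}, after one substitutes the convex combination $(1-\theta_k)\widetilde{\bm{x}}^k+\theta_k\bm{x}$ into \eqref{suffdes1-iAPG}.
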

\begin{proof}
See Appendix \ref{proof-lem-appsuffdes}.
\end{proof}

\begin{lemma}\label{lem-suffdes-iAPG}
Suppose that Assumption \ref{assumA} holds. Let $\{\bm{x}^k\}$ and $\{\widetilde{\bm{x}}^k\}$ be the sequences generated by SpinAPG in Algorithm \ref{algo-SpinAPG}, and let $\bm{z}^k := \bm{x}^k + (\theta_{k-1}^{-1}-1)(\bm{x}^k-\bm{x}^{k-1})$ for all $k\geq0$. Then, for any $k\geq 0$ and any $\bm{x}\in\mathrm{dom}\,P$,
\begin{equation}\label{suffdes-iAPG}
\begin{aligned}
&\quad \frac{1-\theta_{k+1}}{\theta_{k+1}^2}
\left(F(\widetilde{\bm{x}}^{k+1})-F(\bm{x})\right) + \frac{L}{2}\|\bm{x} - \bm{z}^{k+1}\|^2  \\[2pt]
&\leq \frac{1-\theta_{k}}{\theta_{k}^2}
\left(F(\widetilde{\bm{x}}^{k})-F(\bm{x})\right)
+ \frac{L}{2}\|\bm{x} - \bm{z}^k\|^2
+ e(\bm{x})\vartheta_{k+1}  \\[3pt]
&\qquad
+ \left(\theta_k^{-1}\eta_k+(\alpha-1)L\theta_{k-1}^{-1}\mu_{k-1}\right)\|\bm{x}-\bm{z}^{k+1}\|
+ (\alpha-1)L\theta_{k-1}^{-1}\mu_{k-1}\|\bm{x}-\bm{z}^k\| + \xi_k,
\end{aligned}
\end{equation}
where $e(\bm{x})$, $\vartheta_k$, $\xi_k$ are defined in \eqref{defnots}, and $\alpha$ is as in \eqref{condtheta1}.
\end{lemma}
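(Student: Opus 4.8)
\textbf{Proof proposal for Lemma \ref{lem-suffdes-iAPG}.}

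The plan is to combine the approximate sufficient descent property from Lemma \ref{lem-appsuffdes} with the standard accelerated-method ``three-point'' telescoping argument, taking care of the three error components along the way. First I would invoke Lemma \ref{lem-appsuffdes} twice: once with the test point $\bm{x}$ (arbitrary in $\mathrm{dom}\,P$) and once with the test point $\widetilde{\bm{x}}^{k}$, which is legitimate since $\widetilde{\bm{x}}^{k}\in\mathrm{dom}\,P$. Forming the convex combination with weights $(1-\theta_{k+1})$ and $\theta_{k+1}$ — more precisely scaling the $\widetilde{\bm{x}}^{k}$-inequality by $(1-\theta_{k+1})$ and the $\bm{x}$-inequality by $\theta_{k+1}$ and adding — produces on the left $F(\widetilde{\bm{x}}^{k+1})$ minus a convex combination of $F(\widetilde{\bm{x}}^k)$ and $F(\bm{x})$, and on the right a convex combination $(1-\theta_{k+1})\frac{L}{2}\|\widetilde{\bm{x}}^{k}-\bm{y}^{k+1}\|^2+\theta_{k+1}\frac{L}{2}\|\bm{x}-\bm{y}^{k+1}\|^2$ minus $\frac{L}{2}\|\cdot-\bm{x}^{k+2}\|^2$ terms, plus the error terms $\eta_k\|\widetilde{\bm{x}}^{k+1}-\cdot\|$, $\frac{L}{2}\mu_k^2$, $\nu_k$ scaled by the respective weights. (Index bookkeeping: apply the lemma at iteration $k+1$ so that $\bm{y}^{k+1}$ and $\bm{x}^{k+2}$ appear; I will have to be careful whether the statement intends $k$ or $k+1$ here and align with the definition of $\bm{z}^{k+1}$.)

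The second step is the geometric identity that turns the extrapolation step into a telescoping quantity. Recall $\bm{y}^{k} = \bm{x}^k + \theta_k(\theta_{k-1}^{-1}-1)(\bm{x}^k-\bm{x}^{k-1})$ and $\bm{z}^{k} = \bm{x}^k + (\theta_{k-1}^{-1}-1)(\bm{x}^k-\bm{x}^{k-1})$, so $\bm{y}^{k} = (1-\theta_k)\bm{x}^k + \theta_k\bm{z}^k$, i.e. $\bm{y}^k$ is the $\theta_k$-convex combination of the previous iterate $\bm{x}^k$ and the ``momentum point'' $\bm{z}^k$. Using this, the combination $(1-\theta_{k+1})\|\widetilde{\bm{x}}^{k}-\bm{y}^{k+1}\|^2 + \theta_{k+1}\|\bm{x}-\bm{y}^{k+1}\|^2$ should be rewritten — via the identity $\lambda\|a\|^2+(1-\lambda)\|b\|^2 = \|\lambda a+(1-\lambda)b\|^2 + \lambda(1-\lambda)\|a-b\|^2$ and the relation $\bm{z}^{k+1} = \bm{x}^{k+1} + (\theta_k^{-1}-1)(\bm{x}^{k+1}-\bm{x}^k)$ — into a term of the form $\frac{L}{2}\|\bm{x}-\bm{z}^{k}\|^2$ coming from the previous iteration (using that $\widetilde{\bm{x}}^k\approx\bm{x}^k$ up to $\mu_{k-1}$) plus a cross term that, after multiplying the whole inequality by $\theta_{k+1}^{-2}$, collapses the $\frac{L}{2}\|\bm{x}-\bm{x}^{k+1}\|^2$ piece into $\frac{L}{2}\|\bm{x}-\bm{z}^{k+1}\|^2$. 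Multiplying through by $\theta_{k+1}^{-2}$ and using $\theta_{k+1}^{-2}(1-\theta_{k+1}) \le \theta_k^{-2}$ (condition \eqref{condtheta}) converts the $F(\widetilde{\bm{x}}^k)-F(\bm{x})$ coefficient into $\frac{1-\theta_k}{\theta_k^2}$, and the slack generated is exactly $e(\bm{x})\,\vartheta_{k+1}$ with $\vartheta_{k+1}=\theta_k^{-2}-\theta_{k+1}^{-2}(1-\theta_{k+1})$, which is why $e(\bm{x}) = F(\bm{x})-F^* \ge 0$ appears (note $\widetilde{\bm{x}}^k\in\mathrm{dom}\,P$ so $F(\widetilde{\bm{x}}^k)\ge F^*$, hence the direction of the inequality is preserved).

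The third step is to repackage the error terms into the claimed form. The $\eta_k$ terms from the two applications give $\theta_{k+1}^{-2}\eta_{k+1}\big((1-\theta_{k+1})\|\widetilde{\bm{x}}^{k+1}\text{-}\widetilde{\bm{x}}^k\|+\theta_{k+1}\|\widetilde{\bm{x}}^{k+1}-\bm{x}\|\big)$ type contributions; using $\|\widetilde{\bm{x}}^{k+1}-\bm{x}\|\le\|\bm{x}-\bm{z}^{k+1}\|+\|\bm{z}^{k+1}-\widetilde{\bm{x}}^{k+1}\|$ and bounding $\|\bm{z}^{k+1}-\bm{x}^{k+1}\|$, $\|\bm{x}^{k+1}-\widetilde{\bm{x}}^{k+1}\|\le\mu_k$ via condition \eqref{condtheta1} (which gives $\theta_k^{-1}-1 \le \frac{k}{\alpha-1}$-type bounds, ultimately the factor $(\alpha-1)$ and $\theta_{k-1}^{-1}$), one extracts the linear-in-distance terms $\big(\theta_k^{-1}\eta_k+(\alpha-1)L\theta_{k-1}^{-1}\mu_{k-1}\big)\|\bm{x}-\bm{z}^{k+1}\| + (\alpha-1)L\theta_{k-1}^{-1}\mu_{k-1}\|\bm{x}-\bm{z}^k\|$, while the purely quadratic/constant leftovers $\frac{L}{2}\mu_k^2$, $\eta_k(\mu_k+\mu_{k-1})$, $\nu_k$ scaled by $\theta_k^{-2}$ assemble into $\xi_k$. \textbf{The main obstacle} I anticipate is exactly this bookkeeping of the $\widetilde{\bm{x}}$-versus-$\bm{x}$ discrepancies: the geometric identity in step two is clean only when the ``$\widetilde{\bm{x}}^k$'' appearing there is literally $\bm{x}^k$, so every place where they differ must be controlled by $\mu$-terms and routed into either the linear-distance terms (via the triangle inequality against $\|\bm{x}-\bm{z}^{k+1}\|$ and $\|\bm{x}-\bm{z}^k\|$) or into $\xi_k$, and getting the precise constants $2(\alpha-1)L\theta_{k-1}^{-1}\mu_{k-1}$ (cf. the definition of $\delta_k$ in \eqref{defnots}) to match requires a careful, slightly tedious splitting — but no conceptual difficulty beyond the triangle inequality and the elementary bound $\theta_k^{-1}-1\le(\alpha-1)^{-1}k\le(\alpha-1)\theta_{k-1}^{-1}$ implied by \eqref{condtheta1} together with $\theta_{k-1}\ge\frac{1}{k+1}$ from Lemma \ref{lem-thetasumbd}.
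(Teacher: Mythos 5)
There is a genuine gap in your third step, and it traces back to how you apply Lemma \ref{lem-appsuffdes}. You invoke the descent lemma twice (at $\bm{x}$ and at $\widetilde{\bm{x}}^{k}$) and take the weighted sum, so the $\eta$-error arrives as $\eta_k\big(\theta\|\widetilde{\bm{x}}^{k+1}-\bm{x}\|+(1-\theta)\|\widetilde{\bm{x}}^{k+1}-\widetilde{\bm{x}}^{k}\|\big)$, and you then plan to convert $\|\widetilde{\bm{x}}^{k+1}-\bm{x}\|$ into $\|\bm{x}-\bm{z}^{k+1}\|$ plus $\mu$-controlled corrections by bounding $\|\bm{z}^{k+1}-\bm{x}^{k+1}\|$. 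But $\|\bm{z}^{k+1}-\bm{x}^{k+1}\|=(\theta_k^{-1}-1)\|\bm{x}^{k+1}-\bm{x}^{k}\|$ is the momentum displacement, of order $k\|\bm{x}^{k+1}-\bm{x}^{k}\|$; it is not an error quantity and cannot be absorbed into $\xi_k$ or into the $\mu_{k-1}$-coefficient distance terms (the same problem afflicts $\|\widetilde{\bm{x}}^{k+1}-\widetilde{\bm{x}}^{k}\|\approx\|\bm{x}^{k+1}-\bm{x}^{k}\|$, which after dividing by $\theta_k^2$ appears with a coefficient of order $k^2\eta_k$). So the inequality in the stated form does not follow along this route. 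The paper avoids this by applying Lemma \ref{lem-appsuffdes} \emph{once}, at the single test point $(1-\theta_k)\widetilde{\bm{x}}^{k}+\theta_k\bm{x}$ (then using convexity of $F$ to split the function value), so that the $\eta_k$-error is the norm of the deviation of $\widetilde{\bm{x}}^{k+1}$ from that combination point; the exact identity $\bm{x}^{k+1}-(1-\theta_k)\bm{x}^{k}=\theta_k\bm{z}^{k+1}$ then makes the momentum displacement cancel, leaving $\eta_k\big(\theta_k\|\bm{x}-\bm{z}^{k+1}\|+\mu_k+\mu_{k-1}\big)$, which is exactly what produces the claimed coefficients. Once you split the lemma into two separate applications, this recombination is no longer available (the triangle inequality goes the wrong way), so the fix is to adopt the single-combination-point application rather than the weighted sum. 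Your treatment of the quadratic terms, the cross term bounded by $\mu_{k-1}(\|\bm{x}-\bm{z}^{k+1}\|+\|\bm{x}-\bm{z}^{k}\|)$, and the $e(\bm{x})\vartheta_{k+1}$ slack via $F(\widetilde{\bm{x}}^{k+1})\ge F^*$ are all in line with the paper.

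Two smaller points. First, the indices: the descent lemma must be applied at iteration $k$ with weights $(1-\theta_k,\theta_k)$ and the whole inequality divided by $\theta_k^2$; the coefficient $\frac{1-\theta_k}{\theta_k^2}$ on $F(\widetilde{\bm{x}}^{k})-F(\bm{x})$ then appears directly, and only the left-hand coefficient is shifted from $\theta_k^{-2}$ to $\theta_{k+1}^{-2}(1-\theta_{k+1})$ at the cost of $e(\bm{x})\vartheta_{k+1}$; applying the lemma at iteration $k+1$ (producing $\bm{y}^{k+1}$, $\bm{x}^{k+2}$) would not match the quantities $\bm{z}^{k+1}$, $\eta_k$, $\mu_{k-1}$, $\xi_k$ in the statement. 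Second, your closing bound $\theta_k^{-1}-1\le\frac{k}{\alpha-1}$ is reversed: \eqref{condtheta1} gives $\theta_k^{-1}-1\ge\frac{k}{\alpha-1}$. The needed inequality $\theta_k^{-1}(1-\theta_k)\le(\alpha-1)\theta_{k-1}^{-1}$ is still true, but it should be derived as in the paper from \eqref{condtheta} combined with \eqref{condtheta1} and $\theta_{k-1}\ge\frac{1}{k+1}$ (Lemma \ref{lem-thetasumbd}), not from an upper bound on $\theta_k^{-1}-1$ extracted from \eqref{condtheta1} alone.
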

\begin{proof}
See Appendix \ref{proof-lem-suffdes-iAPG}.
\end{proof}

We are now ready to establish the convergence rate of SpinAPG.

\begin{theorem}\label{thm1-iAPG}
Suppose that Assumption \ref{assumA} holds. Let $\{\bm{x}^k\}$ and $\{\widetilde{\bm{x}}^k\}$ be the sequences generated by SpinAPG in Algorithm \ref{algo-SpinAPG}. Then, the following results hold. 
\begin{itemize}[leftmargin=0.9cm]
\item[{\rm (i)}] For any $k\geq0$ and any $\bm{x}\in\mathrm{dom}\,P$,
    \begin{equation*}
    \begin{aligned}
    &\quad F(\widetilde{\bm{x}}^{k+1}) - F(\bm{x})  \\
    &\leq \left(\frac{\alpha-1}{k+\alpha-1}\right)^2
    \left(\big({\textstyle\frac{L}{2}}+\overline{\delta}_k\big)\|\bm{x}-\bm{x}^0\|^2
    + e(\bm{x})\overline{\vartheta}_k
    + \sqrt{\frac{2e(\bm{x})}{L}}\,A_k + B_k\right),
    \end{aligned}
    \end{equation*}
    where $A_k:=\overline{\vartheta}_k^{\frac{1}{2}}\overline{\delta}_k + \sum_{i=0}^k\theta_i^{-1}\delta_i$, $B_k:=(2/L)\,\overline{\delta}_k^2+\sqrt{2/L}\,\overline{\delta}_k\,\overline{\xi}_k^{\frac{1}{2}}
    +\overline{\xi}_k$, and $e(\bm{x})$, $\delta_k$, $\overline{\delta}_k$, $\overline{\xi}_k$, $\overline{\vartheta}_k$ are defined in \eqref{defnots}, and $\alpha$ is as in \eqref{condtheta1}.

\item[{\rm (ii)}] If $\frac{1}{k}\sum^{k-1}_{i=0}\theta_i^{-1}\eta_i\to0$, $\frac{1}{k}\sum^{k-1}_{i=0}\theta_i^{-1}\mu_i\to0$ and $\frac{1}{k}\sum^{k-1}_{i=0}\theta_i^{-2}\nu_i\to0$, then $F(\widetilde{\bm{x}}^k) \to F^*$.

\item[{\rm (iii)}] If $\sum\theta_k^{-1}\eta_k<\infty$, $\sum\theta_k^{-1}\mu_k<\infty$, $\sum\theta_k^{-2}\nu_k<\infty$, and the optimal solution set of problem \eqref{mainpro} is nonempty, then 
    \begin{equation*}
    F(\widetilde{\bm{x}}^k) - F^* \leq \mathcal{O}\left(\frac{1}{k^2}\right).
    \end{equation*}
\end{itemize}
\end{theorem}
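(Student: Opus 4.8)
The crux is part~(i); parts~(ii) and~(iii) follow by specializing $\bm{x}$. The plan for~(i) is to telescope a Lyapunov recursion and then absorb the error-dependent terms through Lemma~\ref{lem-recursion}. Starting from the intermediate inequality \eqref{suffdes-iAPG-tmp} in the proof of Lemma~\ref{lem-suffdes-iAPG}, I would replace the coefficient $\theta_k^{-2}(1-\theta_k)$ by $\theta_{k-1}^{-2}-\vartheta_k$ (this is the definition of $\vartheta_k$ in \eqref{defnots}) and use $\vartheta_k\ge 0$ together with $F(\widetilde{\bm{x}}^{k})\ge F^*$ to estimate $-\vartheta_k\big(F(\widetilde{\bm{x}}^{k})-F(\bm{x})\big)\le \vartheta_k e(\bm{x})$. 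Writing $\Psi_k:=\theta_{k-1}^{-2}\big(F(\widetilde{\bm{x}}^{k})-F(\bm{x})\big)+\tfrac{L}{2}\|\bm{x}-\bm{z}^k\|^2$, this turns \eqref{suffdes-iAPG-tmp} into the clean recursion $\Psi_{k+1}\le \Psi_k+\vartheta_k e(\bm{x})+r_k$, with $r_k:=\big(\theta_k^{-1}\eta_k+(\alpha-1)L\theta_{k-1}^{-1}\mu_{k-1}\big)\|\bm{x}-\bm{z}^{k+1}\|+(\alpha-1)L\theta_{k-1}^{-1}\mu_{k-1}\|\bm{x}-\bm{z}^k\|+\xi_k$. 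Since $\theta_0=1$ annihilates the $F(\widetilde{\bm{x}}^{0})$ term at $k=0$ and $\bm{z}^0=\bm{x}^0$, telescoping from $0$ to $K-1$ (with $K:=k+1$) yields $\Psi_K\le \tfrac{L}{2}\|\bm{x}-\bm{x}^0\|^2+e(\bm{x})\overline{\vartheta}_{K-1}+\sum_{i=0}^{K-1}r_i$.

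Next I would treat the iterate-dependent terms $\|\bm{x}-\bm{z}^{i+1}\|,\|\bm{x}-\bm{z}^i\|$ appearing in $\sum r_i$. Set $a_i:=\sqrt{L/2}\,\|\bm{x}-\bm{z}^i\|$; after bounding the possibly negative function-value part of $\Psi_K$ below by $-\theta_{K-1}^{-2}e(\bm{x})$ and dropping it, the telescoped inequality takes the form $a_K^2\le q_{K-1}+\sum_{i=0}^{K-1}\big(\lambda_i a_{i+1}+\widetilde{\lambda}_i a_i+c_i\big)$ with $q_k=\tfrac{L}{2}\|\bm{x}-\bm{x}^0\|^2+e(\bm{x})\big(\overline{\vartheta}_k+\theta_k^{-2}\big)$, $\lambda_i+\widetilde{\lambda}_i=\sqrt{2/L}\,\delta_i$, and $c_i=\xi_i$, so that $P_k=\sqrt{2/L}\,\overline{\delta}_k$ and $\sum_{i\le k}c_i=\overline{\xi}_k$ in the notation of Lemma~\ref{lem-recursion} and \eqref{defnots}. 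Invoking the second conclusion of Lemma~\ref{lem-recursion} to bound $\sum_{i=0}^{K-1}(\lambda_i a_{i+1}+\widetilde{\lambda}_i a_i+c_i)$, substituting back into the $\Psi_K$ inequality, dividing by $\theta_{K-1}^{-2}$, and using $\theta_{K-1}^2\le\big(\tfrac{\alpha-1}{K+\alpha-2}\big)^2=\big(\tfrac{\alpha-1}{k+\alpha-1}\big)^2$ (by \eqref{condtheta1}) produces the asserted estimate; the remaining steps are routine, namely $\max_i q_i=q_{K-1}$ by monotonicity, $Q_i^{1/2}\le (L/2)^{1/2}\|\bm{x}-\bm{x}^0\|+e(\bm{x})^{1/2}\big(\overline{\vartheta}_i^{1/2}+\theta_i^{-1}\big)$ via $\sqrt{s+t}\le\sqrt{s}+\sqrt{t}$, $\overline{\vartheta}_i\le\overline{\vartheta}_{K-1}$, and Lemma~\ref{lem-thetasumbd} to keep all constants finite. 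The case $F(\widetilde{\bm{x}}^{k+1})<F(\bm{x})$ is trivial since the right-hand side is nonnegative.

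For (ii) and (iii), note that Assumption~\ref{assumA} makes $F$ proper, closed, convex, and level-bounded, hence it attains its infimum and there is a minimizer $\bm{x}^*$ with $e(\bm{x}^*)=0$; applying~(i) at $\bm{x}=\bm{x}^*$ leaves only $\big(\tfrac{\alpha-1}{k+\alpha-1}\big)^2\big((\tfrac{L}{2}+\overline{\delta}_k)\|\bm{x}^*-\bm{x}^0\|^2+B_k\big)$. For~(iii), the summability hypotheses make $\overline{\delta}_k$ bounded (with $\mu_{-1}=1$ one has $\sum_i\theta_{i-1}^{-1}\mu_{i-1}=1+\sum_i\theta_i^{-1}\mu_i<\infty$) and $\overline{\xi}_k$ bounded, using $\theta_i^{-2}\mu_i^2\le(\sup_j\theta_j^{-1}\mu_j)\theta_i^{-1}\mu_i$, $\theta_i^{-2}\eta_i\mu_i\le\tfrac12\big((\theta_i^{-1}\eta_i)^2+(\theta_i^{-1}\mu_i)^2\big)$, and $\theta_i^{-1}\le\sqrt{\alpha}\,\theta_{i-1}^{-1}$ for $i\ge1$ (the last from \eqref{condtheta}--\eqref{condtheta1}); hence $B_k$ is bounded and the bound is $\mathcal{O}(1/k^2)$. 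For~(ii), the Cesàro hypotheses give $\overline{\delta}_k=o(k)$ and, via $\sum_{i<k}b_i^2\le(\sum_{i<k}b_i)^2$ for nonnegative $b_i$, also $\overline{\xi}_k=o(k^2)$, so after multiplying by the $\Theta(1/k^2)$ prefactor the whole bound tends to $0$; since $F(\widetilde{\bm{x}}^{k})\ge F^*$, this forces $F(\widetilde{\bm{x}}^k)\to F^*$. The main obstacle is the bookkeeping in~(i): choosing $a_i,q_i,\lambda_i,\widetilde{\lambda}_i,c_i$ so that Lemma~\ref{lem-recursion} outputs exactly $\overline{\delta}_k,\overline{\xi}_k,A_k,B_k$, and recognizing that the sharp prefactor $\theta_{k-1}^2$ (hence the $\big(\tfrac{\alpha-1}{k+\alpha-1}\big)^2$ rate) must come from the $\theta_{k-1}^{-2}$-weighted Lyapunov function built on \eqref{suffdes-iAPG-tmp}, since $\tfrac{1-\theta_k}{\theta_k^2}=\theta_{k-1}^{-2}-\vartheta_k$ is in general strictly smaller than $\theta_{k-1}^{-2}$ and telescoping \eqref{suffdes-iAPG} directly would be too lossy.
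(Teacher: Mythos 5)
Your treatment of part (i) -- the crux -- is essentially the paper's own proof: the $\theta_{k-1}^{-2}$-weighted Lyapunov recursion you telescope from \eqref{suffdes-iAPG-tmp} is exactly the paper's \eqref{comp-iAPG-tmp}, and your instantiation of Lemma \ref{lem-recursion} with $a_i=\sqrt{L/2}\,\|\bm{x}-\bm{z}^i\|$, $q_i=\tfrac{L}{2}\|\bm{x}-\bm{x}^0\|^2+e(\bm{x})\big(\overline{\vartheta}_i+\theta_i^{-2}\big)$, $c_i=\xi_i$ and $\lambda_i+\widetilde{\lambda}_i=\sqrt{2/L}\,\delta_i$ (with $q_i$ nondecreasing) reproduces \eqref{xzbound}--\eqref{skbd-iAPG} verbatim, including the final multiplication by $\theta_k^2$ and the use of \eqref{condtheta1}; part (iii) also coincides with the paper's argument via the summability bounds on $\overline{\delta}_k$ and $\overline{\xi}_k$. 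The one genuine deviation is part (ii): you note that Assumption \ref{assumA} already forces attainment of $F^*$ ($F$ is proper, lsc and level-bounded on a finite-dimensional space), apply (i) at $\bm{x}=\bm{x}^*$ so that $e(\bm{x}^*)=0$ kills the $\overline{\vartheta}_k$ and $A_k$ terms, and only need $\overline{\delta}_k=o(k)$ and $B_k=o(k^2)$; the paper instead keeps $\bm{x}\in\mathrm{dom}\,P$ arbitrary, invokes Lemma \ref{lem-thetasumbd} to bound $\overline{\vartheta}_k$ and $A_k$, obtains $\limsup_k F(\widetilde{\bm{x}}^{k})\le F(\bm{x})+2(\alpha-1)^2e(\bm{x})$, and then takes an infimum over $\bm{x}$. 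Your shortcut is correct and lighter (it also shows the nonemptiness hypothesis in (iii) is redundant under Assumption \ref{assumA}), while the paper's version does not rely on attainment of the infimum and would survive if level-boundedness were weakened. Finally, be aware that your last step, like the printed proof, passes from $\tfrac{L}{2}\|\bm{x}-\bm{x}^0\|^2+\overline{\delta}_k\|\bm{x}-\bm{x}^0\|$ to $\big(\tfrac{L}{2}+\overline{\delta}_k\big)\|\bm{x}-\bm{x}^0\|^2$, which strictly speaking needs $\|\bm{x}-\bm{x}^0\|\ge 1$ or a Young-type absorption; this is inherited from the paper's own write-up rather than introduced by you, and it does not affect parts (ii)--(iii).
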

\begin{proof}
\textit{Statement (i)}.
First, it follows from \eqref{suffdes-iAPG} that, for any $i\geq0$ and any $\bm{x}\in\mathrm{dom}\,P$,
\begin{equation*}
\begin{aligned}
&\quad {\textstyle\theta_{i+1}^{-2}(1-\theta_{i+1})\left(F(\widetilde{\bm{x}}^{i+1})-F(\bm{x})\right) + \frac{L}{2}\|\bm{x}-\bm{z}^{i+1}\|^2}  \\[3pt]
&\leq {\textstyle\theta_{i}^{-2}(1-\theta_{i})\left(F(\widetilde{\bm{x}}^{i})-F(\bm{x})\right)
+ \frac{L}{2}\|\bm{x}-\bm{z}^i\|^2}
+ e(\bm{x})\vartheta_{i+1} \\[3pt]
&\quad
+ \left(\theta_i^{-1}\eta_i+(\alpha-1)L\theta_{i-1}^{-1}\mu_{i-1}\right)\|\bm{x}-\bm{z}^{i+1}\| + (\alpha-1)L\theta_{i-1}^{-1}\mu_{i-1}\|\bm{x}-\bm{z}^i\| + \xi_i,
\end{aligned}
\end{equation*}
where $\bm{z}^i := \bm{x}^i + (\theta_{i-1}^{-1}-1)(\bm{x}^i-\bm{x}^{i-1})$ for all $i\geq0$.
Then, for any $k\geq1$, summing the above inequality from $i=0$ to $i=k-1$ and recalling $\theta_0=1$ yields
\begin{equation*}
\begin{aligned}
&\theta_{k}^{-2}(1-\theta_{k})\big(F(\widetilde{\bm{x}}^{k})-F(\bm{x})\big) + {\textstyle\frac{L}{2}}\|\bm{x} - \bm{z}^{k}\|^2
\leq {\textstyle\frac{L}{2}}\|\bm{x} - \bm{z}^0\|^2
+ e(\bm{x}){\textstyle\sum^{k-1}_{i=0}}\,\vartheta_{i+1} \\
&~~ + {\textstyle\sum^{k-1}_{i=0}}\!
\left(\left(\theta_i^{-1}\eta_i+(\alpha\!-\!1)L\theta_{i-1}^{-1}\mu_{i-1}\right)\|\bm{x}-\bm{z}^{i+1}\|
+ (\alpha\!-\!1)L\theta_{i-1}^{-1}\mu_{i-1}\|\bm{x}-\bm{z}^i\| + \xi_i\right).
\end{aligned}
\end{equation*}
Combining this inequality, \eqref{suffdes-iAPG-tmp}, and $\sum^{k-1}_{i=0}\vartheta_{i+1}=\sum^{k}_{i=1}\vartheta_{i}
\leq\overline{\vartheta}_k:=\sum^{k}_{i=0}\vartheta_i$, one can see that, for all $k\geq1$,
\begin{equation}\label{comp-iAPG-tmp}
\begin{aligned}
\theta_{k}^{-2}\big(F(\widetilde{\bm{x}}^{k+1})-F(\bm{x})\big) + {\textstyle\frac{L}{2}}\|\bm{x}-\bm{z}^{k+1}\|^2
\leq {\textstyle\frac{L}{2}}\|\bm{x}-\bm{z}^0\|^2
+ e(\bm{x})\overline{\vartheta}_k
+ S_k, 
\end{aligned}
\end{equation}
where
\begin{equation*}
S_k:={\textstyle\sum^{k}_{i=0}}
\big((\theta_i^{-1}\eta_i+(\alpha-1)L\theta_{i-1}^{-1}\mu_{i-1})\|\bm{x}-\bm{z}^{i+1}\|
+ (\alpha-1)L\theta_{i-1}^{-1}\mu_{i-1}\|\bm{x}-\bm{z}^i\| + \xi_i\big).
\end{equation*}
Moreover, it is easy to verify that \eqref{comp-iAPG-tmp} also holds for $k=0$ with $\mu_{-1}=1$. This together with $F(\bm{x})-F(\widetilde{\bm{x}}^{k+1})\leq e(\bm{x})$ implies that
\begin{equation}\label{xzbound}
{\textstyle\frac{L}{2}}\|\bm{x}-\bm{z}^{k+1}\|^2
\leq {\textstyle\frac{L}{2}}\|\bm{x}-\bm{z}^0\|^2
+ \big(\theta_k^{-2} + \overline{\vartheta}_{k}\big)e(\bm{x})
+ S_k, \quad \forall\,k\geq0.
\end{equation}
Then, by applying Lemma \ref{lem-recursion} with $a_k:=\sqrt{L/2}\,\|\bm{x}-\bm{z}^{k}\|$, $q_k:={\textstyle\frac{L}{2}}\|\bm{x}-\bm{z}^0\|^2
+ \big(\theta_k^{-2} + \overline{\vartheta}_{k}\big)e(\bm{x})$, $c_k:=\xi_k$, $\lambda_k:=\sqrt{2/L}\,(\theta_k^{-1}\eta_k+(\alpha-1)L\theta_{k-1}^{-1}\mu_{k-1})$, $\widetilde{\lambda}_k:= \sqrt{2/L}(\alpha-1)L\,\theta_{k-1}^{-1}\mu_{k-1}$ and noticing that $\{q_k\}$ is nondecreasing in this case because $\theta_{k+1}^{-2}+ \overline{\vartheta}_{k+1}-\theta_k^{-2}-\overline{\vartheta}_{k}=\theta_{k+1}^{-1}>0$ for all $k\geq0$, we obtain that
\begin{equation*}
\begin{aligned}
S_k
&\leq {\textstyle\frac{2}{L}\big({\textstyle\sum^k_{i=0}}\delta_i\big)^2
+ \sqrt{\frac{2}{L}}\sum^{k}_{i=0}\delta_i
\sqrt{\frac{L}{2}\|\bm{x}-\bm{z}^0\|^2
+ \big(\theta_i^{-2}+\overline{\vartheta}_i\big)e(\bm{x})}} \\
&\qquad + {\textstyle\sqrt{\frac{2}{L}}\big(\sum^{k}_{i=0}\delta_i\big)
\big(\sum^k_{i=0}\xi_i\big)^{\frac{1}{2}}
+ \sum^k_{i=0}\xi_i} \\
&\leq {\textstyle \sqrt{\frac{2}{L}}\sum^{k}_{i=0}\delta_i
\sqrt{\frac{L}{2}\|\bm{x}-\bm{z}^0\|^2
+ \big(\theta_i^{-2} + \overline{\vartheta}_i\big)e(\bm{x})}
+ B_k},
\end{aligned}
\end{equation*}
where $\delta_k:=\theta_k^{-1}\eta_k+2(\alpha-1)L\theta_{k-1}^{-1}\mu_{k-1}$, $\overline{\delta}_k:=\sum^{k}_{i=0}\delta_i$, $\overline{\xi}_k:={\textstyle\sum^{k}_{i=0}}\,\xi_i$ and  $B_k:=\frac{2}{L}\overline{\delta}_k^2+\sqrt{\frac{2}{L}}\overline{\delta}_k\overline{\xi}_k^{\frac{1}{2}}
+\overline{\xi}_k$. Moreover,
\begin{equation*}
\begin{aligned}
{\textstyle \sum^{k}_{i=0}\delta_i\sqrt{\frac{L}{2}\|\bm{x}-\bm{z}^0\|^2
+ \big(\theta_i^{-2} + \overline{\vartheta}_i\big)e(\bm{x})}}
&\leq {\textstyle\sum^{k}_{i=0}\delta_i\left(\sqrt{\frac{L}{2}}\|\bm{x}-\bm{z}^0\|
+ \big(\theta_i^{-1}
+ \overline{\vartheta}_i^{\frac{1}{2}}\big)\sqrt{e(\bm{x})}\right)} \\
&\leq {\textstyle \sqrt{\frac{L}{2}}\,\overline{\delta}_k\|\bm{x}-\bm{z}^0\|
+ \sqrt{e(\bm{x})}\Big(\overline{\vartheta}_k^{\frac{1}{2}}\overline{\delta}_k
+ \sum^{k}_{i=0}\theta_i^{-1}\delta_i\Big)}.
\end{aligned}
\end{equation*}
where the last inequality follows from the fact that
$0\leq\overline{\vartheta}_0\leq\overline{\vartheta}_1\leq\cdots\leq\overline{\vartheta}_k$. Thus, combining the above two inequalities results in
\begin{equation}\label{skbd-iAPG}
S_k \leq \overline{\delta}_k\|\bm{x}-\bm{z}^0\|
+ \sqrt{2e(\bm{x})/L}\,A_k + B_k,
\end{equation}
where $A_k:=\overline{\vartheta}_k^{\frac{1}{2}} \overline{\delta}_k + \sum_{i=0}^k\theta_i^{-1}\delta_i$. From \eqref{comp-iAPG-tmp} and \eqref{skbd-iAPG}, we further get
\begin{equation*}
\begin{aligned}
\theta_{k}^{-2}\big(F(\widetilde{\bm{x}}^{k+1})-F(\bm{x})\big)
\leq {\textstyle\frac{L}{2}}\|\bm{x}-\bm{z}^0\|^2
+ \overline{\delta}_k\|\bm{x}-\bm{z}^0\|
+ e(\bm{x})\overline{\vartheta}_k
+ \sqrt{2e(\bm{x})/L}\,A_k + B_k.
\end{aligned}
\end{equation*}
Multiplying this inequality by $\theta_{k}^2$, followed by using the fact $\theta_k\leq\frac{\alpha-1}{k+\alpha-1}$ for all $k\geq0$ and recalling $\bm{z}^0=\bm{x}^0$, we can obtain the desired result in statement (i).

\textit{Statement (ii)}.
Notice that, for $k=0$, we have $\theta_{-1}/\theta_0=1<\sqrt{\alpha}$, and for $k\geq1$, we have
\begin{equation*}
\frac{\theta_{k-1}}{\theta_k}
\leq \left(\frac{1}{1-\theta_k}\right)^{\frac{1}{2}}
\leq \left(1+\frac{\alpha-1}{k}\right)^{\frac{1}{2}}
\leq \sqrt{\alpha},
\end{equation*}
where the first inequality follows from \eqref{condtheta} and the second  follows from \eqref{condtheta1}. Then, we see
\begin{equation*}
\begin{aligned}
\xi_k &=\theta_k^{-2}\left({\textstyle\frac{L}{2}}\mu_k^2+\eta_k(\mu_k+\mu_{k-1})+\nu_k\right) \\
&\leq {\textstyle\frac{L}{2}}\left(\theta_k^{-1}\mu_k\right)^2
+ \left(\theta_k^{-1}\eta_k\right)
\left(\theta_k^{-1}\mu_k+\sqrt{\alpha}\,\theta_{k-1}^{-1}\mu_{k-1}\right)
+ \theta_k^{-2}\nu_k,
\end{aligned}
\end{equation*}
which implies that
\begin{equation}\label{xiallbd}
\begin{aligned}
\overline{\xi}_k
&
\leq {\textstyle\frac{L}{2}}{\textstyle\sum^k_{i=0}}\left(\theta_i^{-1}\mu_i\right)^2
+ {\textstyle\sum^k_{i=0}}\left(\theta_i^{-1}\eta_i\right)
\left(\theta_i^{-1}\mu_i
+ \sqrt{\alpha}\,\theta_{i-1}^{-1}\mu_{i-1}\right)
+ {\textstyle\sum^k_{i=0}}\theta_i^{-2}\nu_i \\
&\leq {\textstyle\frac{L}{2}}\!\left({\textstyle\sum^k_{i=0}}\theta_i^{-1}\mu_i\right)^2
\!\!+\! {\textstyle\frac{1}{2}}\!\left({\textstyle\sum^k_{i=0}}\theta_i^{-1}\eta_i\right)^2
\!+\! {\textstyle\frac{1}{2}}\!\left({\textstyle\sum^k_{i=0}}\!\left(\theta_i^{-1}\mu_i
\!+\! \sqrt{\alpha}\,\theta_{i-1}^{-1}\mu_{i-1}\right)\right)^2 
+ {\textstyle\sum^k_{i=0}}\theta_i^{-2}\nu_i.
\end{aligned}
\end{equation}
Using this relation, $\frac{1}{k}\sum^{k-1}_{i=0}\theta_i^{-1}\eta_i\to0$, $\frac{1}{k}\sum^{k-1}_{i=0}\theta_i^{-1}\mu_i\to0$ and $\frac{1}{k}\sum^{k-1}_{i=0}\theta_i^{-2}\nu_i\to0$, one can verify
\begin{equation}\label{limfacts1}
\lim_{k\to\infty}k^{-1}\overline{\delta}_k = 0, \quad
\lim_{k\to\infty}k^{-2}\overline{\xi}_k = 0, \quad
\lim_{k\to\infty}k^{-2}B_k = 0.
\end{equation}
On the other hand, by applying Lemma \ref{lem-thetasumbd}, we have that $\overline{\vartheta}_k \leq 2+\frac{(k+3)^2}{2}$. Thus,
\begin{equation*}
\left(\frac{\alpha-1}{k+\alpha-1}\right)^2\overline{\vartheta}_k \leq 2(\alpha-1)^2, \qquad \left(\frac{\alpha-1}{k+\alpha-1}\right)^2\overline{\vartheta}_k^{\frac{1}{2}}\overline{\delta}_k
\leq \frac{\sqrt{2}(\alpha-1)^2}{k+\alpha-1}\overline{\delta}_k \to 0.
\end{equation*}
Moreover, we have
\begin{equation*}
\left(\frac{\alpha-1}{k+\alpha-1}\right)^2\sum_{i=0}^k\theta_i^{-1}\delta_i
\leq \left(\frac{\alpha-1}{k+\alpha-1}\right)^2\sum_{i=0}^k(i+2)\delta_i
\leq \frac{(\alpha-1)^2}{k+\alpha-1}\,\overline{\delta}_k \to 0,
\end{equation*}
where the first inequality follows from $\theta_{k}\geq\frac{1}{k+2}$ for all $k\geq0$ (by Lemma \ref{lem-thetasumbd}) and the last inequality follows from $\frac{i+2}{k+\alpha-1}\leq1$ for all $0\leq i \leq k$ and $\alpha\geq3$. Then, we see that
\begin{equation*}
\lim_{k\to\infty}\left(\frac{\alpha-1}{k+\alpha-1}\right)^2A_k
= \lim_{k\to\infty}\left(\frac{\alpha-1}{k+\alpha-1}\right)^2
\left(\overline{\vartheta}_k^{\frac{1}{2}} \overline{\delta}_k + \sum_{i=0}^k\theta_i^{-1}\delta_i\right)
= 0.
\end{equation*}
Now, using this fact, \eqref{limfacts1} and statement (i), we can conclude that
\begin{equation*}
\limsup\limits_{k\to\infty}\,F(\widetilde{\bm{x}}^{k})
\leq F(\bm{x}) + 2(\alpha-1)^2\,e(\bm{x}), \quad \forall\,\bm{x}\in\mathrm{dom}\,P.
\end{equation*}
This implies that
\begin{equation*}
F^*
\leq\liminf\limits_{k\to\infty}\,F(\widetilde{\bm{x}}^{k})
\leq\limsup\limits_{k\to\infty}\,F(\widetilde{\bm{x}}^{k})
\leq F^*,
\end{equation*}
from which, we can conclude that $F(\widetilde{\bm{x}}^{k}) \to F^*$ and proves statement (ii).

\textit{Statement (iii)}.
If, in addition, the optimal solution set of problem \eqref{mainpro} is nonempty, we have that $F^*=\min\left\{F(\bm{x}) : \bm{x}\in\mathbb{E} \right\} = F(\bm{x}^*)$, where $\bm{x}^*$ is an optimal solution of problem \eqref{mainpro}. Then, applying statement (i) to $\bm{x}=\bm{x}^*$, we get
\begin{equation*}
0 \leq F(\widetilde{\bm{x}}^{k+1}) - F^* \leq \left(\frac{\alpha-1}{k+\alpha-1}\right)^2
\left(\big(L/2+\overline{\delta}_k\big)\|\bm{x}^*-\bm{x}^0\|^2 + B_k\right), \quad \forall\,k\geq0.
\end{equation*}
Moreover, from $\sum\theta_k^{-1}\eta_k<\infty$, $\sum\theta_k^{-1}\mu_k<\infty$, $\sum\theta_k^{-2}\nu_k<\infty$ and \eqref{xiallbd}, one can verify that
\begin{equation*}
\lim_{k\to\infty}\overline{\delta}_k<\infty, \quad
\lim_{k\to\infty}\overline{\xi}_k<\infty, \quad
\lim_{k\to\infty}B_k<\infty.
\end{equation*}
By the above facts, we can conclude that $F(\widetilde{\bm{x}}^k) - F^* \leq \mathcal{O}(1/k^2)$ and complete the proof.
\end{proof}

In Theorem \ref{thm1-iAPG}, we establish the ${\cal O}(1/k^2)$ convergence rate of SpinAPG in terms of the objective residual $F(\widetilde{\bm{x}}^k) - F^*$. Notably, all these results are derived under standard assumptions commonly used in the analysis of the APG method, along with suitable summable error conditions on $\{\eta_k\}_{k=0}^{\infty}$, $\{\mu_k\}_{k=0}^{\infty}$ and $\{\nu_k\}_{k=0}^{\infty}$. In particular, our analysis does not require additional stringent conditions, such as the boundedness of the dual solution sequence assumed in \cite{jst2012inexact}. This highlights an advantage of our inexact framework over the one proposed in \cite{jst2012inexact}.

\subsection{The improved $o(1/k^2)$ convergence rate and the sequential convergence}\label{sec-convana-2}

In this subsection, we further explore the improved convergence properties of SpinAPG under specific choices of the parameter sequence $\{\theta_k\}$. Motivated by recent works \cite{acpr2018fast,ap2016rate,brr2018inertial,cd2015convergence}, we establish that, if $\theta_{k}$ in \textbf{Step 3} of Algorithm \ref{algo-SpinAPG} is set as
\begin{equation}\label{improvetheta}
\theta_k=\frac{\alpha-1}{k+\alpha-1} \quad \mbox{with} \quad  \alpha>3,
\end{equation}
for all $k\geq1$, the convergence rate of the function value sequence generated by SpinAPG improves to $o(1/k^2)$, and both sequences $\{\bm{x}^k\}$ and $\{\widetilde{\bm{x}}^k\}$ are guaranteed to converge to the same limit.
Our results extend the relevant conclusions from \cite{acpr2018fast,ap2016rate,brr2018inertial,cd2015convergence}, making them applicable to a broader and more practical inexact framework. We first give a lemma that is crucial for proving the improved convergence rate.

\begin{lemma}\label{lem-smallo}
Suppose that Assumption \ref{assumA} holds, and the optimal solution set of problem \eqref{mainpro} is nonempty. Additionally, assume that $\theta_k$ is chosen as \eqref{improvetheta}, and $\sum\theta_k^{-1}\eta_k<\infty$, $\sum\theta_k^{-1}\mu_k<\infty$, $\sum\theta_k^{-2}\nu_k<\infty$. Let $\{\bm{x}^k\}$ and $\{\widetilde{\bm{x}}^k\}$ be the sequences generated by SpinAPG in Algorithm \ref{algo-SpinAPG}. Then, the following statements hold.
\begin{itemize}[leftmargin=0.9cm]
\item[{\rm (i)}] $\sum^{\infty}_{k=0}k\big(F(\widetilde{\bm{x}}^{k})-F^*\big) < \infty$.
\item[{\rm (ii)}] $\sum^{\infty}_{k=0}k\|\bm{x}^{k}-\bm{x}^{k-1}\|^2 < \infty$.
\item[{\rm (iii)}] $\lim\limits_{k\to\infty}k^2\big(F(\widetilde{\bm{x}}^k)-F^*\big) + \frac{L}{2}k^2\|\bm{x}^{k}-\bm{x}^{k-1}\|^2$ exists.
\end{itemize}
\end{lemma}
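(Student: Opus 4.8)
The plan is to convert the recursive inequality \eqref{suffdes-iAPG} (equivalently its summed form leading to \eqref{comp-iAPG-tmp} and \eqref{xzbound}) into a genuine Lyapunov-type descent inequality for the specific choice $\theta_k=\frac{\alpha-1}{k+\alpha-1}$ with $\alpha>3$, and then extract the three summability/limit conclusions from it in the standard way used in \cite{ap2016rate,cd2015convergence}. First I would record that with this $\theta_k$, one has $\theta_k^{-1}=\frac{k+\alpha-1}{\alpha-1}$, $\vartheta_{k+1}=\theta_k^{-2}-\theta_{k+1}^{-2}(1-\theta_{k+1})=\theta_{k+1}^{-1}$, and crucially the \emph{strict} gap $\theta_k^{-2}-\theta_{k+1}^{-2}+\theta_{k+1}^{-1}=\frac{1}{(\alpha-1)^2}\big((k+\alpha-1)^2-(k+\alpha)^2+(\alpha-1)(k+\alpha)\big)=\frac{(\alpha-3)k+(\alpha-1)(\alpha-3)}{(\alpha-1)^2}=\frac{(\alpha-3)(k+\alpha-1)}{(\alpha-1)^2}$, which is $\Theta(k)$ and is what produces the extra summable term. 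I would define the energy $\mathcal{E}_k:=\theta_{k-1}^{-2}\big(F(\widetilde{\bm{x}}^{k})-F^*\big)+\frac{L}{2}\|\bm{x}^*-\bm{z}^k\|^2$ (taking $\bm{x}=\bm{x}^*$ an optimal solution, using $e(\bm{x}^*)=0$ so the $e(\bm{x})\vartheta_{k+1}$ term vanishes), and rewrite the left side of \eqref{suffdes-iAPG} using $\theta_{k}^{-2}(F(\widetilde{\bm x}^{k+1})-F^*)=\theta_{k+1}^{-2}(1-\theta_{k+1})(F(\widetilde{\bm x}^{k+1})-F^*)+\vartheta_{k+1}(F(\widetilde{\bm x}^{k+1})-F^*)$ so that the "missing" coefficient $\vartheta_{k+1}(F(\widetilde{\bm x}^{k+1})-F^*)\ge 0$ is retained rather than discarded. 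This yields
\begin{equation*}
\mathcal{E}_{k+1}+\vartheta_{k+1}\big(F(\widetilde{\bm x}^{k+1})-F^*\big)\le \mathcal{E}_k+\big(\theta_k^{-1}\eta_k+(\alpha-1)L\theta_{k-1}^{-1}\mu_{k-1}\big)\|\bm{x}^*-\bm{z}^{k+1}\|+(\alpha-1)L\theta_{k-1}^{-1}\mu_{k-1}\|\bm{x}^*-\bm{z}^k\|+\xi_k.
\end{equation*}

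Next I would handle the perturbation terms. From Theorem \ref{thm1-iAPG} and the hypotheses $\sum\theta_k^{-1}\eta_k<\infty$, $\sum\theta_k^{-1}\mu_k<\infty$, $\sum\theta_k^{-2}\nu_k<\infty$, we already know $\overline{\delta}_k$, $\overline{\xi}_k$, $B_k$ are bounded, and \eqref{xzbound} gives $\frac{L}{2}\|\bm{x}^*-\bm{z}^{k+1}\|^2\le \frac{L}{2}\|\bm{x}^*-\bm{x}^0\|^2+\theta_k^{-2}\cdot 0+S_k$ with $S_k$ bounded by \eqref{skbd-iAPG} (again $e(\bm{x}^*)=0$), hence $\sup_k\|\bm{x}^*-\bm{z}^k\|<\infty$. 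Therefore the coefficients multiplying $\|\bm{x}^*-\bm{z}^{k+1}\|$ and $\|\bm{x}^*-\bm{z}^k\|$ form a summable sequence (each is a bounded constant times $\theta_k^{-1}\eta_k+\theta_{k-1}^{-1}\mu_{k-1}$), so the entire right-hand perturbation $r_k:=(\theta_k^{-1}\eta_k+(\alpha-1)L\theta_{k-1}^{-1}\mu_{k-1})\|\bm{x}^*-\bm{z}^{k+1}\|+(\alpha-1)L\theta_{k-1}^{-1}\mu_{k-1}\|\bm{x}^*-\bm{z}^k\|+\xi_k$ satisfies $\sum_k r_k<\infty$ (for $\xi_k$, note $\overline{\xi}_k$ bounded). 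Applying Lemma \ref{lemseqcon} to $\mathcal{E}_k$ with $\gamma_k=r_k$ shows $\{\mathcal{E}_k\}$ converges; then telescoping the displayed inequality gives $\sum_{k}\vartheta_{k+1}(F(\widetilde{\bm x}^{k+1})-F^*)\le \mathcal{E}_0+\sum_k r_k<\infty$. Since $\vartheta_{k+1}=\theta_{k+1}^{-1}=\frac{k+\alpha}{\alpha-1}=\Theta(k)$, this is exactly statement (i), $\sum_k k(F(\widetilde{\bm x}^k)-F^*)<\infty$.

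For statement (ii), I would go back to the sufficient descent estimate \eqref{suffdes1-iAPG} (or rather its consequence in the derivation of Lemma \ref{lem-suffdes-iAPG}) and recover the term of the form $\frac{L}{2}\|\bm{x}^{k+1}-\bm{y}^k\|^2$ that was absorbed; concretely, rewriting $\frac{L}{2}\|(1-\theta_k)\widetilde{\bm x}^k+\theta_k\bm{x}^*-\bm{y}^k\|^2-\frac{L}{2}\|(1-\theta_k)\widetilde{\bm x}^k+\theta_k\bm{x}^*-\bm{x}^{k+1}\|^2$ one can split off $-\frac{L}{2}\|\bm{x}^{k+1}-\bm{y}^k\|^2$ up to cross terms controlled by $\mu_{k-1}$ and $\|\bm{x}^*-\bm{z}^k\|$ (bounded); since $\bm{x}^{k+1}-\bm{y}^k$ and $\bm{x}^k-\bm{x}^{k-1}$ are related via the extrapolation $\bm{y}^k-\bm{x}^k=\theta_k(\theta_{k-1}^{-1}-1)(\bm{x}^k-\bm{x}^{k-1})$, a more direct route is: from the strong convexity of the subproblem objective and criterion \eqref{inexcond-iAPG}, $\frac{L}{2}\|\bm{x}^{k+1}-\bm{x}^{k,*}\|^2$ is controlled, and one relates $\|\bm{x}^{k+1}-\bm{y}^k\|$ to $\|\bm{z}^{k+1}-\bm{z}^k\|$; multiplying the recovered term by $\theta_k^{-2}=\Theta(k^2)$ and noting $\|\bm{x}^{k+1}-\bm{y}^k\|^2\ge \tfrac12\|\bm{x}^{k+1}-\bm{x}^k\|^2 - \|\bm{y}^k-\bm{x}^k\|^2$ with $\|\bm{y}^k-\bm{x}^k\|=\theta_k(\theta_{k-1}^{-1}-1)\|\bm{x}^k-\bm{x}^{k-1}\|=O(1)\cdot\|\bm{x}^k-\bm{x}^{k-1}\|$, one sees $\theta_k^{-2}\|\bm{x}^{k+1}-\bm{y}^k\|^2\gtrsim k^2\|\bm{x}^{k+1}-\bm{x}^k\|^2 - C\,k^2\theta_k^2\|\bm{x}^k-\bm{x}^{k-1}\|^2$, and $k^2\theta_k^2=O(1)$; summing the telescoped inequality then yields $\sum_k k^2\|\bm{x}^{k+1}-\bm{x}^k\|^2\cdot\frac{1}{k}\lesssim \sum_k k\|\bm{x}^{k+1}-\bm{x}^k\|^2<\infty$ after the $1/k$ adjustment — more carefully one argues $\sum_k k\|\bm{x}^k-\bm{x}^{k-1}\|^2<\infty$ directly by keeping the descent term at the right power. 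The main obstacle, and where I would spend the most care, is precisely this bookkeeping in statement (ii): correctly recovering the $\|\bm{x}^{k+1}-\bm{y}^k\|^2$ descent term with the right power of $\theta_k$, converting it to $\|\bm{x}^k-\bm{x}^{k-1}\|^2$ via the extrapolation relation, and verifying that all error-induced cross terms remain summable after multiplication by $\theta_k^{-2}$ (using the boundedness facts from Theorem \ref{thm1-iAPG} and the three summability hypotheses). Finally, statement (iii) is immediate: the quantity $k^2(F(\widetilde{\bm x}^k)-F^*)+\frac{L}{2}k^2\|\bm{x}^k-\bm{x}^{k-1}\|^2$ is, up to the asymptotically constant factor $(\alpha-1)^2$ and lower-order corrections, comparable to $\theta_{k-1}^{-2}(F(\widetilde{\bm x}^k)-F^*)+\frac{L}{2}\theta_{k-1}^{-2}\|\bm{x}^k-\bm{x}^{k-1}\|^2$, which differs from the convergent energy $\mathcal{E}_k$ by terms that one shows tend to a limit using $\|\bm{z}^k-\bm{x}^k\|=(\theta_{k-1}^{-1}-1)\|\bm{x}^k-\bm{x}^{k-1}\|$ and parts (i)–(ii); expanding $\|\bm{x}^*-\bm{z}^k\|^2$ and invoking convergence of $\mathcal{E}_k$ together with Lemma \ref{lemseqcon}-type arguments on the residual terms gives existence of the limit.
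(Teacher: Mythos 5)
Your statement (i) is essentially the paper's own argument: take \eqref{suffdes-iAPG-tmp} at $\bm{x}=\bm{x}^*$, use $e(\bm{x}^*)=0$, bound the accumulated error terms via \eqref{skbd-iAPG}/\eqref{xzbound} (which give $\sup_k\|\bm{x}^*-\bm{z}^k\|<\infty$ and summable perturbations), telescope, and use that the retained coefficient is $\Theta(k)$ precisely because $\alpha>3$. Two slips there are harmless but worth flagging: $\vartheta_{k+1}\neq\theta_{k+1}^{-1}$ (for \eqref{improvetheta} one gets $\vartheta_k=\frac{(\alpha-3)k+(\alpha-2)^2}{(\alpha-1)^2}$, still $\Theta(k)$), and with your definition of $\mathcal{E}_k$ the displayed inequality double-counts the peeled term — the correct telescoping form is $\mathcal{E}_{k+1}+\vartheta_{k}\big(F(\widetilde{\bm{x}}^{k})-F^*\big)\leq\mathcal{E}_k+r_k$ (equivalently, keep the energy with weight $(1-\theta_k)\theta_k^{-2}$ as in \eqref{suffdes-iAPG} and peel $\vartheta_{k+1}(F(\widetilde{\bm{x}}^{k+1})-F^*)$). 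Neither affects the conclusion.

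The genuine gap is in statements (ii) and (iii). For (ii), the route you sketch does not work as stated: after weighting by $\theta_k^{-2}$, the correction term is $\theta_k^{-2}\|\bm{y}^k-\bm{x}^k\|^2=(\theta_{k-1}^{-1}-1)^2\|\bm{x}^k-\bm{x}^{k-1}\|^2=\Theta(k^2)\|\bm{x}^k-\bm{x}^{k-1}\|^2$, not $O(1)\|\bm{x}^k-\bm{x}^{k-1}\|^2$ as you claim (your "$k^2\theta_k^2=O(1)$" forgets the $\theta_k^{-2}$ weight), so the crude bound $\|\bm{x}^{k+1}-\bm{y}^k\|^2\geq\tfrac12\|\bm{x}^{k+1}-\bm{x}^k\|^2-\|\bm{y}^k-\bm{x}^k\|^2$ leaves no order-$k$ surplus to sum. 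The paper instead applies \eqref{suffdes1-iAPG} at $\bm{x}=\widetilde{\bm{x}}^k$ (comparing consecutive iterations, not the $\bm{x}^*$-energy), converts $\|\bm{x}^{k+1}-\widetilde{\bm{x}}^k\|^2$, $\|\bm{y}^k-\widetilde{\bm{x}}^k\|^2$, $\|\widetilde{\bm{x}}^{k+1}-\widetilde{\bm{x}}^k\|$ using the $\mu$-bounds, multiplies by $(k+1)^2$ and exploits the exact cancellation $(k+1)^2\big(\tfrac{k-1}{k+\alpha-1}\big)^2\leq(k-1)^2\leq k^2$, so that $(k+1)^2-(k^2+k)\geq 1$ releases the surplus $\tfrac{L}{2}(k+1)\|\bm{x}^{k+1}-\bm{x}^k\|^2$ after telescoping; and, crucially, the linear error terms $p_k(k+1)\|\bm{x}^{k+1}-\bm{x}^k\|$ and $\widetilde{p}_kk\|\bm{x}^k-\bm{x}^{k-1}\|$ cannot simply be declared summable, because boundedness of $k\|\bm{x}^k-\bm{x}^{k-1}\|$ is not yet available at this stage — the paper controls them by the self-bounding recursion Lemma \ref{lem-recursion} applied with $a_k=\sqrt{L/2}\,k\|\bm{x}^k-\bm{x}^{k-1}\|$ and $q_k$ bounded via statement (i), yielding \eqref{sumfseqbd}. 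Your sketch supplies neither the weighted recursion \eqref{recur-k2f} nor any mechanism for these cross terms, and you explicitly defer the bookkeeping. For (iii), your comparison with the $\bm{z}^k$-energy expands $\|\bm{x}^*-\bm{z}^k\|^2$ into $\|\bm{x}^k-\bm{x}^*\|^2$ plus the cross term $\tfrac{2(k-1)}{\alpha-1}\langle\bm{x}^k-\bm{x}^{k-1},\bm{x}^k-\bm{x}^*\rangle$, whose convergence is not known here — in the paper it is obtained only in the subsequent sequential-convergence theorem, which uses Lemma \ref{lem-smallo} itself, so this route is circular; the intended proof of (iii) is immediate from \eqref{recur-k2f}, \eqref{sumfseqbd} and Lemma \ref{lemseqcon}, i.e., it relies on exactly the recursion your argument for (ii) never establishes.
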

\begin{proof}
See Appendix \ref{proof-lem-smallo}.
\end{proof}

With this lemma in hand, we now present the main results in the following theorem.

\begin{theorem}
Under the same conditions of Lemma \ref{lem-smallo}, the following statements hold.
\begin{itemize}[leftmargin=0.9cm]
\item[{\rm (i)}] $\lim\limits_{k\to\infty} k^2\big(F(\widetilde{\bm{x}}^{k})-F^*\big)=0$ and $\lim\limits_{k\to\infty} k\|\bm{x}^{k}-\bm{x}^{k-1}\|=0$. In other words,
    \begin{equation*}
    F(\widetilde{\bm{x}}^{k})-F^* = o\left(\frac{1}{k^{2}}\right)
    \quad \text{and} \quad
    \|\bm{x}^{k}-\bm{x}^{k-1}\| = o\left(\frac{1}{k}\right).
    \end{equation*}

\item[{\rm (ii)}] Both sequences $\{\bm{x}^k\}$ and $\{\widetilde{\bm{x}}^k\}$ are bounded, and any cluster point of $\{\bm{x}^k\}$ or $\{\widetilde{\bm{x}}^k\}$ is an optimal solution of problem \eqref{mainpro}.

\item[{\rm (iii)}] $\{\bm{x}^{k}\}$ and $\{\widetilde{\bm{x}}^{k}\}$ converge to a same optimal solution of problem \eqref{mainpro}.
\end{itemize}
\end{theorem}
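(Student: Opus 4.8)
The plan is to prove the three statements in the stated order, using only the three estimates of Lemma~\ref{lem-smallo}, the sufficient-descent inequality \eqref{suffdes1-iAPG}, and the bound \eqref{comp-iAPG-tmp}--\eqref{skbd-iAPG}.

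\textit{Statement (i).} I would set $a_k:=k^2\big(F(\widetilde{\bm{x}}^k)-F^*\big)+\tfrac{L}{2}k^2\|\bm{x}^k-\bm{x}^{k-1}\|^2\ge0$; by Lemma~\ref{lem-smallo}(iii) the limit $\ell:=\lim_k a_k$ exists, and I claim $\ell=0$. If not, then $a_k\ge\ell/2$ for all large $k$, hence $k\big(F(\widetilde{\bm{x}}^k)-F^*\big)+\tfrac{L}{2}k\|\bm{x}^k-\bm{x}^{k-1}\|^2\ge\ell/(2k)$ eventually; summing over $k$ and using $\sum_k 1/k=\infty$ contradicts $\sum_k k\big(F(\widetilde{\bm{x}}^k)-F^*\big)<\infty$ and $\sum_k k\|\bm{x}^k-\bm{x}^{k-1}\|^2<\infty$ from Lemma~\ref{lem-smallo}(i)--(ii). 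So $\ell=0$, and since each of the two nonnegative summands is bounded above by $a_k$, both tend to $0$, which is exactly $F(\widetilde{\bm{x}}^k)-F^*=o(1/k^2)$ and $\|\bm{x}^k-\bm{x}^{k-1}\|=o(1/k)$.

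\textit{Statement (ii).} Taking $\bm{x}=\bm{x}^*$ with $\bm{x}^*$ an optimal solution (so $e(\bm{x}^*)=0$) in \eqref{comp-iAPG-tmp} and invoking \eqref{skbd-iAPG}, the right-hand side collapses to $\tfrac{L}{2}\|\bm{x}^*-\bm{x}^0\|^2+\overline{\delta}_k\|\bm{x}^*-\bm{x}^0\|+B_k$, which is bounded because $\{\overline{\delta}_k\}$ and $\{B_k\}$ converge (as already shown in the proof of Lemma~\ref{lem-smallo}); hence $\{\bm{z}^k\}$ is bounded. Since $\bm{z}^k=\bm{x}^k+\tfrac{k-1}{\alpha-1}(\bm{x}^k-\bm{x}^{k-1})$ and $\tfrac{k-1}{\alpha-1}\|\bm{x}^k-\bm{x}^{k-1}\|\le\tfrac{1}{\alpha-1}\,k\|\bm{x}^k-\bm{x}^{k-1}\|\to0$ by (i), the sequence $\{\bm{x}^k\}$ is bounded, and then so is $\{\widetilde{\bm{x}}^k\}$ because $\|\widetilde{\bm{x}}^k-\bm{x}^k\|\le\mu_{k-1}\to0$ (summability of $\{\theta_k^{-1}\mu_k\}$ with $\theta_k^{-1}\ge1$ forces $\mu_k\to0$). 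If $\bm{x}^{k_j}\to\bar{\bm{x}}$ along a subsequence, then also $\widetilde{\bm{x}}^{k_j}\to\bar{\bm{x}}$; since $F$ is lower semicontinuous and $F(\widetilde{\bm{x}}^k)\to F^*$ by (i), we get $F(\bar{\bm{x}})\le\liminf_j F(\widetilde{\bm{x}}^{k_j})=F^*$, so $\bar{\bm{x}}$ is optimal, and the same argument applies to cluster points of $\{\widetilde{\bm{x}}^k\}$.

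\textit{Statement (iii).} Fix an optimal $\bm{x}^*$ and put $h_k:=\tfrac12\|\bm{x}^k-\bm{x}^*\|^2$. Setting $\bm{x}=\bm{x}^*$ in \eqref{suffdes1-iAPG}, discarding the nonnegative term $F(\widetilde{\bm{x}}^{k+1})-F^*$, writing $\bm{y}^k=\bm{x}^k+\beta_k(\bm{x}^k-\bm{x}^{k-1})$ with $\beta_k=\tfrac{k-1}{k+\alpha-1}$, and using $\tfrac12\|\bm{x}^*-\bm{y}^k\|^2=(1+\beta_k)h_k-\beta_k h_{k-1}+\tfrac12\beta_k(1+\beta_k)\|\bm{x}^k-\bm{x}^{k-1}\|^2$, I obtain the inertial recursion $h_{k+1}-h_k\le\beta_k(h_k-h_{k-1})+\delta_k$ with $\delta_k:=\tfrac12\beta_k(1+\beta_k)\|\bm{x}^k-\bm{x}^{k-1}\|^2+\tfrac1L\eta_k\|\widetilde{\bm{x}}^{k+1}-\bm{x}^*\|+\tfrac12\mu_k^2+\tfrac1L\nu_k$. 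Using boundedness of $\{\widetilde{\bm{x}}^k\}$ from (ii) together with the hypotheses (which, via $\theta_k^{-1}=\Theta(k)$, give $\sum_k k\eta_k,\sum_k k\mu_k^2,\sum_k k\nu_k<\infty$) and Lemma~\ref{lem-smallo}(ii), one verifies $\sum_k k\delta_k<\infty$. Then, with $t_k:=\theta_k^{-1}=\tfrac{k+\alpha-1}{\alpha-1}$ so that $t_k\beta_k=t_{k-1}-1$, multiplying $[h_{k+1}-h_k]_+\le\beta_k[h_k-h_{k-1}]_+ +\delta_k$ by $t_k$ and telescoping yields $\sum_k[h_k-h_{k-1}]_+<\infty$; hence $h_k-\sum_{j\le k}[h_j-h_{j-1}]_+$ is nonincreasing and bounded below, so $\{h_k\}$ converges. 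Thus $\lim_k\|\bm{x}^k-\bm{x}^*\|$ exists for every optimal $\bm{x}^*$, and combining this with (ii), Opial's lemma gives convergence of $\{\bm{x}^k\}$ to some optimal point; $\{\widetilde{\bm{x}}^k\}$ converges to the same limit since $\|\widetilde{\bm{x}}^k-\bm{x}^k\|\to0$.

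I expect the main obstacle to be the final step of Statement (iii): turning the inertial recursion with $\beta_k\uparrow1$ into convergence of $\{h_k\}$. The exact-case arguments of \cite{cd2015convergence,acpr2018fast} do not transfer verbatim, because the perturbation $\delta_k$ must survive multiplication by $t_k=\Theta(k)$ in the telescoping; this is precisely why the error hypotheses are calibrated as $\sum\theta_k^{-1}\eta_k,\sum\theta_k^{-1}\mu_k,\sum\theta_k^{-2}\nu_k<\infty$ rather than the weaker $\sum\eta_k,\sum\mu_k,\sum\nu_k<\infty$, and why all three parts of Lemma~\ref{lem-smallo} are needed to confirm $\sum_k k\delta_k<\infty$.
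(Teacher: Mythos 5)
Your proposal is correct. Parts (i) and (ii) coincide with the paper's own argument: (i) is the same "summable $\frac{1}{k}\cdot(\cdot)$ plus existing limit implies the limit is zero" observation (you phrase it by contradiction, the paper phrases it directly), and (ii) is exactly the paper's chain: boundedness of $\{\bm{z}^k\}$ from \eqref{xzbound}/\eqref{skbd-iAPG} with $e(\bm{x}^*)=0$, transfer to $\{\bm{x}^k\}$ via $k\|\bm{x}^k-\bm{x}^{k-1}\|\to0$, then to $\{\widetilde{\bm{x}}^k\}$ via $\mu_{k-1}\to0$, and optimality of cluster points via lower semicontinuity of $F$ and $F(\widetilde{\bm{x}}^k)\to F^*$.

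For part (iii) you take a genuinely different route. The paper does not build a new recursion: it reuses the already-established inequality \eqref{suffdes-iAPG} at $\bm{x}=\bm{x}^*$ together with \eqref{skbd-iAPG} and Lemma \ref{lemseqcon} to conclude that $(1-\theta_k)\theta_k^{-2}\big(F(\widetilde{\bm{x}}^k)-F^*\big)+\frac{L}{2}\|\bm{x}^*-\bm{z}^k\|^2$ converges; since the first term vanishes by part (i) and Lemma \ref{lem-smallo}(i), $\lim_k\|\bm{z}^k-\bm{x}^*\|$ exists, and the expansion of $\|\bm{z}^k-\bm{x}^*\|^2$ together with $k\|\bm{x}^k-\bm{x}^{k-1}\|\to0$ transfers this to $\lim_k\|\bm{x}^k-\bm{x}^*\|$, after which the same Opial-type cluster-point argument closes the proof. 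You instead work on the anchor $h_k=\frac12\|\bm{x}^k-\bm{x}^*\|^2$ directly: starting from the one-step descent inequality \eqref{suffdes1-iAPG} at $\bm{x}=\bm{x}^*$, the three-point identity gives the inertial recursion $h_{k+1}-h_k\le\beta_k(h_k-h_{k-1})+\delta_k$, and the classical weighted telescoping with $t_k=\theta_k^{-1}$, $t_k\beta_k=t_{k-1}-1$ (Chambolle--Dossal/Attouch style) yields $\sum_k[h_k-h_{k-1}]_+<\infty$, hence convergence of $h_k$; your verification that $\sum_k t_k\delta_k<\infty$ is correct, using Lemma \ref{lem-smallo}(ii) for the $k\|\bm{x}^k-\bm{x}^{k-1}\|^2$ term, boundedness of $\{\widetilde{\bm{x}}^k\}$ from (ii), and the calibrated summability hypotheses (with $\theta_k^{-1}=\Theta(k)$, and $\mu_k$ bounded so that $\sum k\mu_k^2<\infty$). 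The trade-off: the paper's route is shorter because it recycles the Lyapunov quantity in $\bm{z}^k$ already controlled in Sections \ref{sec-convana-1}--\ref{sec-convana-2} and avoids any new telescoping, at the cost of passing through $\bm{z}^k$ and needing both $k^2$- and $k$-decay of the function-value gap; your route re-derives a quasi-Fej\'er property for $\bm{x}^k$ itself, which is more self-contained at the level of the iterates and makes explicit why the error conditions must be summable against $\theta_k^{-1}$ (respectively $\theta_k^{-2}$), but requires the extra inertial-recursion machinery. Only a cosmetic caveat: your $\delta_k$ clashes with the $\delta_k$ already defined in \eqref{defnots}, so rename it if you write this up.
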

\begin{proof}
\textit{Statement (i)}.
It readily follows from Lemma \ref{lem-smallo}(i)\&(ii) that
\begin{equation*}
\sum\frac{1}{k}\big( k^2\big(F(\widetilde{\bm{x}}^{k})-F(\bm{x}^*)\big) + {\textstyle\frac{L}{2}}k^2\|\bm{x}^{k}-\bm{x}^{k-1}\|^2 \big) < \infty.
\end{equation*}
This, together with Lemma \ref{lem-smallo}(iii), implies that
\begin{equation*}
\lim\limits_{k\to\infty}k^2\big(F(\widetilde{\bm{x}}^k)-F(\bm{x}^*)\big)
+ {\textstyle\frac{L}{2}}k^2\|\bm{x}^{k}-\bm{x}^{k-1}\|^2=0.
\end{equation*}
Since two terms are nonnegative, their limits are equal to 0. This proves statement (i).

\vspace{1mm}
\textit{Statement (ii)}.
Recall that $\bm{z}^k:=\bm{x}^k + (\theta_{k-1}^{-1}-1)(\bm{x}^k-\bm{x}^{k-1})=\bm{x}^k + \frac{k-1}{\alpha-1}(\bm{x}^k-\bm{x}^{k-1})$. Given an optimal solution of \eqref{mainpro}, denoted by $\bm{x}^*$, one can see from \eqref{xzbound}, \eqref{skbd-iAPG} and $e(\bm{x}^*)=0$ that
\begin{equation*}
{\textstyle\frac{L}{2}}\|\bm{x}^*-\bm{z}^{k+1}\|^2
\leq \big({\textstyle\frac{L}{2}}+\overline{\delta}_k\big)\|\bm{x}^*-\bm{z}^0\|^2
+ B_k, \quad \forall\,k\geq0,
\end{equation*}
where $B_k:=(2/L)\,\overline{\delta}_k^2+\sqrt{2/L}\,\overline{\delta}_k\,\overline{\xi}_k^{\frac{1}{2}}
+\overline{\xi}_k$ with $\overline{\delta}_k$ and $\overline{\xi}_k$ defined in \eqref{defnots}. Since $\sum\theta_k^{-1}\eta_k<\infty$, $\sum\theta_k^{-1}\mu_k<\infty$, $\sum\theta_k^{-2}\nu_k<\infty$ and $\theta_k\in(0,1]$, we have $\lim\limits_{k\to\infty}\overline{\delta}_k<\infty$ and $\lim\limits_{k\to\infty}B_k<\infty$. Thus, $\{\|\bm{x}^*-\bm{z}^{k}\|\}$ is bounded and hence $\{\bm{z}^k\}$ is bounded. Moreover, we know from statement (i) that $\{k\|\bm{x}^{k}-\bm{x}^{k-1}\|\}$ (and hence $\{k(\bm{x}^{k}-\bm{x}^{k-1})\}$) is also bounded. Therefore, $\{\bm{x}^k\}$ is bounded. This then together with $\|\widetilde{\bm{x}}^{k}-\bm{x}^{k}\|\leq\mu_{k-1}$ and $\sum\mu_k<\infty$ implies that $\{\widetilde{\bm{x}}^{k}\}$ is also bounded.

Since $\{\widetilde{\bm{x}}^{k}\}$ is bounded, it has at least one cluster point. Suppose that $\widetilde{\bm{x}}^{\infty}$ is a cluster point and $\{\widetilde{\bm{x}}^{k_i}\}$ is a convergent subsequence such that $\lim_{i\to\infty} \widetilde{\bm{x}}^{k_i} = \widetilde{\bm{x}}^{\infty}$. Then, from statement (i) and the lower semicontinuity of $F$ (since $P$ and $f$ are closed by Assumptions \ref{assumA}2\&3), we see that $F^* = \lim_{k\to\infty}\,F(\widetilde{\bm{x}}^{k}) \geq F(\widetilde{\bm{x}}^{\infty})$. This implies that $F(\widetilde{\bm{x}}^{\infty})$ is finite and hence  $\widetilde{\bm{x}}^{\infty}\in\mathrm{dom}\,F$. Therefore, $\widetilde{\bm{x}}^{\infty}$ is an optimal solution of problem \eqref{mainpro}.

Note also that $\{\bm{x}^{k}\}$ is bounded and has at least one cluster point. Suppose that $\bm{x}^{\infty}$ is a cluster point and $\{\bm{x}^{k_i}\}$ is a convergent subsequence such that $\lim_{i\to\infty}\bm{x}^{k_i}=\bm{x}^{\infty}$. This then together with $\|\widetilde{\bm{x}}^{k_i}-\bm{x}^{k_i}\|\leq\mu_{k_i-1}\to0$ induces that $\{\widetilde{\bm{x}}^{k_i}\}$ also converges to $\bm{x}^{\infty}$. Hence, $\bm{x}^{\infty}$ is a cluster point of $\{\widetilde{\bm{x}}^{k}\}$ and thus, from discussions in last paragraph, $\bm{x}^{\infty}$ is also an optimal solution of \eqref{mainpro}. This proves statement (ii).

\vspace{1mm}
\textit{Statement (iii)}.
Let $\bm{x}^*$ be an arbitrary optimal solution of problem \eqref{mainpro}. It follows from \eqref{suffdes-iAPG}, \eqref{skbd-iAPG}, $e(\bm{x}^*)=0$, $\lim_{k\to\infty}\overline{\delta}_k<\infty$, $\lim_{k\to\infty}B_k<\infty$ and Lemma \ref{lemseqcon} that $\big\{(1-\theta_{k})\theta_{k}^{-2}\big(F(\widetilde{\bm{x}}^{k})-F(\bm{x}^*)\big) +(L/2)\|\bm{x}^*-\bm{z}^k\|^2\big\}$ is convergent. Note that $(1-\theta_{k})\theta_{k}^{-2}=\frac{k(k+\alpha-1)}{(\alpha-1)}$, $\lim_{k\to\infty} k^2\big(F(\widetilde{\bm{x}}^{k})-F^*\big)=0$ (by statement (i)) and $\lim_{k\to\infty} k\big(F(\widetilde{\bm{x}}^{k})-F^*\big)=0$ (by Lemma \ref{lem-smallo}(i)). Then, $\lim_{k\to\infty}\|\bm{z}^k-\bm{x}^*\|$ exists. Moreover, observe that
\begin{equation*}
\begin{aligned}
\|\bm{z}^k-\bm{x}^*\|^2
&= \|\bm{x}^k
+ {\textstyle\frac{k-1}{\alpha-1}}(\bm{x}^k-\bm{x}^{k-1})-\bm{x}^*\|^2 \\
&= {\textstyle\left(\frac{k-1}{\alpha-1}\right)^2}\|\bm{x}^k-\bm{x}^{k-1}\|^2
+ 2{\textstyle\left(\frac{k-1}{\alpha-1}\right)}
\langle\bm{x}^k-\bm{x}^{k-1},\,\bm{x}^k-\bm{x}^*\rangle
+ \|\bm{x}^k-\bm{x}^*\|^2.
\end{aligned}
\end{equation*}
From this, $\lim_{k\to\infty}k\|\bm{x}^{k}-\bm{x}^{k-1}\|=0$ (by statement (i)), the boundedness of $\{\bm{x}^k\}$ (by statement (ii)) and the fact that $\lim_{k\to\infty}\|\bm{z}^k-\bm{x}^*\|$ exists, we see that $\lim_{k\to\infty}\|\bm{x}^k-\bm{x}^*\|$ exists.

Suppose that $\bm{x}^{\infty}$ is a cluster point of $\{\bm{x}^k\}$ (which must exist since $\{\bm{x}^k\}$ is bounded) and $\{\bm{x}^{k_j}\}$ is a convergent subsequence such that $\lim_{j\to\infty} \bm{x}^{k_j} = \bm{x}^{\infty}$. Note from statement (ii) that $\bm{x}^{\infty}$ is an optimal solution of problem \eqref{mainpro}. Then, from discussions in the last paragraph, we can conclude that $\lim_{k\to\infty}\|\bm{x}^k-\bm{x}^{\infty}\|$ exists. This together with $\lim_{j\to\infty} \bm{x}^{k_j} = \bm{x}^{\infty}$ implies that $\lim_{k\to\infty}\|\bm{x}^k-\bm{x}^{\infty}\|=0$. Now, let $\bm{z}$ be an arbitrary cluster point of $\{\bm{x}^k\}$ with a convergent subsequence $\{\bm{x}^{k'_j}\}$ such that $\bm{x}^{k'_j}\to\bm{z}$. Since $\lim_{k\to\infty}\|\bm{x}^k-\bm{x}^{\infty}\|=0$, we have that $\lim_{k\to\infty}\|\bm{x}^{k'_j}-\bm{x}^{\infty}\|=0$. This further implies that $\bm{z}=\bm{x}^{\infty}$. Since $\bm{z}$ is arbitrary, we can conclude that $\lim_{k\to\infty}\bm{x}^k=\bm{x}^{\infty}$. Finally, using this together with $\|\widetilde{\bm{x}}^{k}-\bm{x}^{k}\|\leq\mu_{k-1}\to0$, we deduce that $\{\widetilde{\bm{x}}^k\}$ also converges to $\bm{x}^{\infty}$. This completes the proof.
\end{proof}

\section{Numerical experiments}\label{sec-num}

In this section, we conduct numerical experiments to compare SpinAPG with several representative inexact APG schemes: iAPG-SLB \cite{srb2011convergence}, AIFB \cite{vsbv2013accelerated}, o-iFB \cite{ad2015stability}, and I-FISTA \cite{bgk2023on}. 
The results provide insights that complement the theoretical analysis and underscore the importance of developing a more flexible and economical inexact APG framework.
All experiments are conducted in {\sc Matlab} R2024b on an Apple M3 system running macOS 
with 24 GB of RAM.


We consider the following sparse quadratic programming (QP) problem:
\begin{equation}\label{eq:QP_org} 
\begin{aligned} 
\min_{\bm{u}\in\mathbb{R}^n}~\frac{1}{2}\bm{u}^\top P_0\bm{u} + \bm{q}_0^\top\bm{u} +
\lambda \|\bm{u}\|_1 \qquad {\rm s.t.} \qquad A_0\bm{u}\leq\bm{b}, \end{aligned} \end{equation}
where $P_0\in\mathbb{S}_{++}^n$ (symmetric positive definite), $\bm{q}_0\in\mathbb{R}^n$, $A_0\in\mathbb{R}^{m\times n}$, $\bm{b}\in\mathbb{R}^m$, and $\lambda\geq0$. By introducing the slack variable $\bm{s}:=\bm{b}-A_0\bm{u} \in \mathbb{R}^m$ and defining the augmented variable $\bm{x} := (\bm{u}; \bm{s}) \in \mathbb{R}^{n+m}$, problem \eqref{eq:QP_org} can be reformulated as
\begin{equation}\label{eq:QP_orgre}
\begin{aligned}
\min_{\bm{x}\in \mathbb{R}^{n+m}} \ \frac{1}{2} \bm{x}^\top P\bm{x}
+ \bm{q}^\top\bm{x} + \lambda\|\bm{x}_{J}\|_1
+ \delta_{\{\bm{x}_{J^\complement}\ge 0\}}(\bm{x})
\qquad {\rm s.t.} \qquad A\bm{x} = \bm{b},
\end{aligned}
\end{equation}
where $J:=\{1,\dots,n\}$ and $J^\complement := \{1,\dots,n+m\}\!\setminus\!J$ are index sets, and the augmented problem data are given by 
$P = \begin{bmatrix}
P_0 & 0_{n\times m}\\
0_{m\times n} & 0_{m\times m}
\end{bmatrix}$, $\bm{q} = \begin{bmatrix} \bm{q}_0 \\
\bm 0_{m}\end{bmatrix}$, and $A = \begin{bmatrix}
A_0 & I_m
\end{bmatrix}$.

When the feasible set of \eqref{eq:QP_org} is nonempty, standard results such as \cite[Theorem 3.34]{r2006nonlinear}, indicate that $\bm{x}^*$ is an optimal solution of \eqref{eq:QP_orgre} if and only if there exists $\bm{z}^*\in\mathbb{R}^m$ such that
\begin{align*}
A\bm{x} = \bm{b},\quad 0\in P\bm{x}^* + \bm{q} - A^\top \bm{z}^* + \partial g(\bm{x}^*),
\end{align*}
where $g(\bm{x}) :=  \lambda\|\bm{x}_{J}\|_1
+ \delta_{\{\bm{x}_{J^\complement}\ge 0\}}(\bm{x})$. 
Based on the above KKT system, we define the relative KKT residual for any $(\bm{x},\,\bm{z})\in\mathbb{R}^{m+n}\times\mathbb{R}^m$ as:
\begin{equation*}
\Delta_{\rm kkt}(\bm{x},\,\bm{z}) := \max\left\{
\frac{\|A\bm{x}-\bm{b}\|}{1+ \|\bm{b}\|},
\,\frac{\|\bm{x} - \texttt{prox}_{g/L}\big(\bm{x}- P \bm{x}/L -\bm{q}/L+ A^{\top}\bm{z} /L\big) \|}{1+\|\bm{x} \|}
\right\},
\end{equation*}
where $L = \|P\| = \|P_0\|$.
In the subsequent experiments, we will use $\Delta_{\rm kkt}$ to evaluate the accuracy of an approximate solution to problem \eqref{eq:QP_orgre}.

\subsection{Implementation of the inexact APG method}\label{sec:implement_APG}

We apply the inexact APG method to the reformulation \eqref{eq:QP_orgre} by setting $f(\bm{x}) := \frac{1}{2}\bm{x}^\top P\bm{x} + \bm{q}^\top\bm{x}$ and $P(\bm{x}) := g(\bm{x}) + \delta_{\{A\bm{x}=\bm{b}\}}(\bm{x})$. Clearly, $\nabla f(\bm{x})=P\bm{x}+\bm{q}$ is $L$-Lipschitz. For the extrapolated point $\bm{y}^k$, we simply adopt the update scheme used in FISTA for all inexact APG methods, except I-FISTA. Specifically, at the $k$th iteration, we compute $\bm{y}^k = \bm{x}^k + \theta_k(\theta_{k-1}^{-1}-1)(\bm{x}^k-\bm{x}^{k-1})$ with $\theta_{k} = \frac{1}{2}\left(\sqrt{\theta_{k-1}^4+4\theta_{k-1}^2}-\theta_{k-1}^2\right)$ for all $k\geq1$. Under this setting, iAPG-SLB and AIFB are equivalent, as they share the same error verification procedure (see Section \ref{sec-example}). To guarantee convergence, I-FISTA should update $\bm{y}^{k}$ as $\bm{y}^{k} = \bm{x}^{k} + \theta_k(\theta_{k-1}^{-1}-1) (\bm{x}^k - \bm{x}^{k-1}) - \tau L^{-1}\theta_k\theta_{k-1}^{-1}\Delta^{k-1}$, where $\Delta^{k-1}$ is the associated error vector satisfying \eqref{bgk2023on-cond-general} from the previous iteration.

At the $k$th iteration, the APG subproblem is
\begin{equation*}
\min\limits_{\bm{x}\in\mathbb{R}^{n+m}}~
\left\{ g(\bm{x})
+ \langle \nabla f(\bm{y}^k), \,\bm{x}-\bm{y}^k\rangle
+ \frac{L}{2}\|\bm{x}-\bm{y}^k\|^2 \ \middle\vert\  A\bm{x}=\bm{b} \right\}.
\end{equation*}
Using the same arguments as in Section \ref{sec-example}, this subproblem can be solved on the dual side via a semismooth Newton ({\sc Ssn}) method; see \cite[Section~3]{lst2020on} for more details. Moreover, at each outer iteration, the {\sc Ssn} method is terminated based on the specific requirements of each algorithm.

Specifically, for \textbf{SpinAPG} and \textbf{o-iFB}, the {\sc Ssn} method is terminated when
\begin{equation}\label{stopcond-qp-SpinAPG}
\max\left\{\|A^\top \bm{z}^{k,t} - \nabla f(\bm{y}^k) - L(\bm{x}^{k,t} -\bm{y}^k)\|,\,1\right\}
\cdot\|\nabla\Psi_k(\bm{z}^{k,t})\|
\leq \max\left\{\frac{\Upsilon}{(k+1)^p}, \,10^{-10}\right\},
\end{equation}
where $\Psi_k$ denotes the dual objective at the $k$th iteration defined in \eqref{eq:def_dual}, the coefficient $\Upsilon$ controls the initial accuracy for solving the subproblem, and $p$ determines the tolerance decay rate. Note that \textbf{o-iFB} further requires an explicit feasible $\widetilde{\bm{x}}^{k,t}$ for its next update. In contrast, for \textbf{iAPG-SLB} and \textbf{AIFB}, the {\sc Ssn} method is run until
\begin{equation}\label{stopcond-qp-slb}
g(\widetilde{\bm{x}}^{k,t})
+ \langle \nabla f(\bm{y}^k), \,\widetilde{\bm{x}}^{k,t}-\bm{y}^k\rangle
+ \frac{L}{2}\|\widetilde{\bm{x}}^{k,t}-\bm{y}^k\|^2 + \Psi_k(\bm{z}^{k,t})
\leq \max\left\{\frac{\Upsilon}{(k+1)^p}, \,10^{-10}\right\}.
\end{equation}
Finally, for \textbf{I-FISTA}, the stopping condition of the {\sc Ssn} method is 
\begin{equation}\label{stopcond-qp-ifista}
\|L(\widetilde{\bm{x}}^{k,t}-\bm{x}^{k,t})\|^2 + 2\tau L\varepsilon_{k,t} \leq L\big((1-\tau)L - c\tau\big) \| \widetilde{\bm{x}}^{k,t} - \bm{y}^{k}\|_F^2,
\end{equation}
where $\varepsilon_{k,t}:=\lambda\|\widetilde{\bm{x}}^{k,t}_J\|_1 - \lambda\|\bm{x}^{k,t}_J\|_1 + \langle A^\top \bm{z}^{k,t} - \nabla f(\bm{y}^k) - \frac{L}{\tau}(\bm{x}^{k,t} - \bm{y}^k), \,\bm{x}^{k,t} - \widetilde{\bm{x}}^{k,t} \rangle$, $\tau\in(0,1)$, and $c\in[0,\,L(1-\tau)/\tau]$.

We emphasize again that among all methods above, only SpinAPG allows a potentially \textit{infeasible} iterate $\bm{x}^{k,t}$. In contrast, the other four methods require the explicit computation of a \textit{feasible} point $\widetilde{\bm{x}}^{k,t}$ via projection onto $\mathrm{dom}\,P$. To mitigate the computational burden associated with this projection step, we compute the projection using the state-of-the-art {\sc Ssn} method, which consistently outperforms \texttt{Gurobi} and {\sc Matlab}'s built-in \texttt{quadprog} in our tests, with the relative KKT residual set to $10^{-12}$.

We initialize all methods with $\bm{x}^0:=\bm{0}_{n+m}$, and terminate them when 
\begin{equation*}
\Delta_{\rm kkt}(\bm{x}^k,\bm{z}^k) < 10^{-6},
\end{equation*}
where $\bm{x}^{k}$ and $\bm{z}^{k}$ are approximate primal and dual solutions, respectively, recovered from the {\sc Ssn} iterates as discussed in Section \ref{sec-example}.
To improve efficiency, we employ a \textit{warm-start} strategy to initialize the subsolver {\sc Ssn}. Specifically, at each iteration, we initialize {\sc Ssn} with $\bm{z}^{k,0}=\bm{z}^{k}$, where $\bm{z}^{k}$ is the iterate obtained by {\sc Ssn} in the previous outer iteration.

\subsection{Comparison results}\label{sec:qp_comp}

We use the procedure in \cite[Section A.1]{sbgbb2020osqp} to generate problem instances. Specifically, the positive definite matrix $P_0 \in \mathbb{S}^n_{++}$ is constructed as $P_0 = MM^\top + 0.01 I_n$, where $M \in \mathbb{R}^{n \times n}$ is a sparse matrix with 15\% nonzero entries sampled i.i.d. from $\mathcal{N}(0,1)$. The constraint matrix $A_0 \in \mathbb{R}^{m \times n}$ is generated using the same procedure as for $M$, where $m=10n$. To ensure that the feasible set of \eqref{eq:QP_org} is nonempty, we set $\bm{u} = A_0\bm{v} + \bm{\delta}$, where the reference vector $\bm{v} \in \mathbb{R}^n$ has 15\% nonzero entries sampled from $\mathcal{N}(0,1)$, and the noisy vector $\bm{\delta} \in \mathbb{R}^m$ is sampled with entries uniformly distributed in $[0,1]$. The vector $\bm{q}_0 \in \mathbb{R}^n$ is generated with i.i.d. entries from $\mathcal{N}(0,1)$.

Tables \ref{table:qp_lambda0} and \ref{table:qp_lambdaneq0} present the average numerical performance of each inexact APG method for solving problem \eqref{eq:QP_org} with $\lambda = 0$ (standard QP) and $\lambda > 0$ (sparse QP), respectively. To ensure a fair comparison, the results are averaged over 10 independent instances generated using distinct random seeds. The performance of each method is evaluated across a range of tolerance parameters. Specifically, for SpinAPG, iAPG-SLB/AIFB and o-iFB, we select $\Upsilon\in\{1, \,0.001\}$ and $p\in\{1.1, \,2.1, \,3.1\}$. For I-FISTA, we select $\tau\in\{0.1,0.3,0.5,0.7,0.9\}$ and $\alpha=0.001L$. In these tables, ``\texttt{kkt}" represents the final relative KKT residual, while ``\texttt{out}" and ``\texttt{inn}" record the number of outer APG iterations and the cumulative count of inner {\sc Ssn} iterations, respectively.

\begin{table}[ht]
\caption{Average numerical results of inexact APG methods with varying tolerance parameters for solving problem \eqref{eq:QP_org} with $n=200$ and $\lambda=0$.}
\label{table:qp_lambda0}
\renewcommand\arraystretch{0.9}
\centering \tabcolsep 6.5pt
\begin{tabular}{ccccc|ccccc}
     \toprule
     \multicolumn{5}{c|}{SpinAPG}&\multicolumn{5}{c}{iAPG-SLB/AIFB}   \\
     \cmidrule(lr){1-5} \cmidrule(lr){6-10}
     $(\Upsilon,p)$ & \texttt{kkt}  & \texttt{out} & \texttt{inn} & \texttt{time}
     & $(\Upsilon,p)$  & \texttt{kkt}  & \texttt{out} & \texttt{inn} & \texttt{time}   \\
     \cmidrule(lr){1-5} \cmidrule(lr){6-10}
     (1, 2.1) & 9.50e-07 & 465 & 1280 & 55 & (1, 2.1) & 7.96e-07 & 832 & 2007 & 968 \\
     (1, 3.1) & 9.87e-07 & 442 & 1226 & 52 & (1, 3.1) & 9.89e-07 & 439 & 1236 & 484 \\
     (0.001, 1.1) & 9.92e-07 & 435 & 1193 & 51 & (0.001, 1.1) & 8.21e-07 & 711 & 1780 & 862 \\
     (0.001, 2.1) & 9.93e-07 & 436 & 1271 & 57 & (0.001, 2.1) & 9.77e-07 & 436 & 1283 & 494 \\
     (0.001, 3.1) & 9.84e-07 & 433 & 1329 & 62 & (0.001, 3.1) & 9.90e-07 & 437 & 1386 & 493 \\
     
     \bottomrule \vspace{-2mm} \\
     
     \toprule
     \multicolumn{5}{c|}{o-iFB}&\multicolumn{5}{c}{I-FISTA}   \\
     \cmidrule(lr){1-5} \cmidrule(lr){6-10}
     $(\Upsilon,p)$ & \texttt{kkt}  & \texttt{out} & \texttt{inn} & \texttt{time}
     &$\tau$ & \texttt{kkt}  & \texttt{out} & \texttt{inn} & \texttt{time} \\
     \cmidrule(lr){1-5} \cmidrule(lr){6-10}
     (1, 2.1) & 9.14e-07 & 481 & 1201 & 213 & 0.1 & 9.87e-07 & 1484 & 3629 & 1197 \\
     (1, 3.1) & 9.77e-07 & 440 & 1213 & 147 & 0.3 & 9.95e-07 & 809 & 2073 & 747 \\
     (0.001, 1.1) & 9.91e-07 & 437 & 1176 & 155 & 0.5 & 9.94e-07 & 623 & 1635 & 609 \\
     (0.001, 2.1) & 9.92e-07 & 435 & 1257 & 133 & 0.7 & 9.91e-07 & 526 & 1415 & 541 \\
     (0.001, 3.1) & 9.90e-07 & 437 & 1325 & 115 & 0.9 & 9.81e-07 & 461 & 1295 & 496 \\
     \bottomrule
     \end{tabular}
\end{table}

\begin{table}[ht]
\caption{Same as Table \ref{table:qp_lambda0} but for $\lambda = 10\|\bm q_0\|_{\infty}$.
The resulting approximate solutions have a sparsity level of about 58\%.}
\label{table:qp_lambdaneq0}
\renewcommand\arraystretch{0.9}
\centering \tabcolsep 6.5pt
\begin{tabular}{ccccc|ccccc}
     \toprule
     \multicolumn{5}{c|}{SpinAPG}&\multicolumn{5}{c}{iAPG-SLB/AIFB}   \\
     \cmidrule(lr){1-5} \cmidrule(lr){6-10}
     $(\Upsilon,p)$ & \texttt{kkt}  & \texttt{out} & \texttt{inn} & \texttt{time}
     & $(\Upsilon,p)$  & \texttt{kkt}  & \texttt{out} & \texttt{inn} & \texttt{time}   \\
     \cmidrule(lr){1-5} \cmidrule(lr){6-10}
     (1, 2.1) & 5.65e-07 & 395 & 1730 & 91 & (1, 2.1) & 6.46e-07 & 585 & 1904 & 783 \\
     (1, 3.1) & 7.71e-07 & 146 & 762 & 42 & (1, 3.1) & 8.54e-07 & 160 & 754 & 307 \\
     (0.001, 1.1) & 8.39e-07 & 167 & 821 & 45 & (0.001, 1.1) & 9.01e-07 & 320 & 1157 & 479 \\
     (0.001, 2.1) & 9.38e-07 & 157 & 806 & 44 & (0.001, 2.1) & 8.37e-07 & 152 & 785 & 308 \\
     (0.001, 3.1) & 9.21e-07 & 152 & 828 & 46 & (0.001, 3.1) & 8.47e-07 & 153 & 861 & 314 \\
     
     \bottomrule \vspace{-2mm} \\
     
     \toprule
     \multicolumn{5}{c|}{o-iFB}&\multicolumn{5}{c}{I-FISTA}   \\
     \cmidrule(lr){1-5} \cmidrule(lr){6-10}
     $(\Upsilon,p)$ & \texttt{kkt}  & \texttt{out} & \texttt{inn} & \texttt{time}
     &$\tau$ & \texttt{kkt}  & \texttt{out} & \texttt{inn} & \texttt{time} \\
     \cmidrule(lr){1-5} \cmidrule(lr){6-10}
     (1, 2.1) & 7.26e-07 & 446 & 1833 & 259 & 0.1 & 8.20e-07 & 554 & 2170 & 773 \\
     (1, 3.1) & 8.99e-07 & 156 & 766 & 85 & 0.3 & 9.22e-07 & 301 & 1294 & 484 \\
     (0.001, 1.1) & 8.59e-07 & 193 & 853 & 105 & 0.5 & 7.80e-07 & 222 & 1019 & 389 \\
     (0.001, 2.1) & 8.19e-07 & 150 & 784 & 79 & 0.7 & 8.58e-07 & 178 & 879 & 341 \\
     (0.001, 3.1) & 8.46e-07 & 153 & 818 & 72 & 0.9 & 8.33e-07 & 161 & 820 & 313 \\
     \bottomrule
     \end{tabular}
\end{table}

From Tables \ref{table:qp_lambda0} and \ref{table:qp_lambdaneq0}, SpinAPG consistently attains the prescribed KKT tolerance with the shortest running time. Notably, although the total numbers of outer and inner iterations are sometimes comparable across different inexact methods, their computational costs differ substantially. This performance discrepancy is consistent with our analysis in Sections~\ref{sec-example} and~\ref{sec:implement_APG}. In particular, all inexact APG methods except SpinAPG require the explicit computation of a feasible point $\widetilde{\bm x}^{k,t}$ during updates, which introduces non-negligible overhead even for the moderate-sized problems considered here. Moreover, iAPG-SLB and AIFB require the evaluation of both primal and dual objective values, which further complicates the verification process when either objective function is not readily accessible. In contrast, I-FISTA employs a relative-type error criterion that eliminates the need to prescribe a summable tolerance sequence and is therefore potentially more tuning-friendly. However, this advantage comes at the cost of a more involved verification procedure, as discussed in Sections~\ref{sec-example} and~\ref{sec:implement_APG}. In addition, as noted in \cite[Section 4]{bgk2023on}, choosing a small $\tau<1$ effectively reduces the standard step size $1/L$, which may further slow the practical convergence of I-FISTA. 

In addition, the performance of each inexact APG method is unsurprisingly affected by the choice of tolerance parameters, namely, $(\Upsilon,p)$ for SpinAPG, iAPG-SLB/AIFB, o-iFB, and $\tau$ for I-FISTA, as these parameters directly control the inexactness thresholds. Generally, to reach a prescribed accuracy, a larger $\Upsilon$ with a smaller $p$, or a smaller $\tau$, leads to more outer iterations since the error criterion \eqref{stopcond-qp-SpinAPG}, \eqref{stopcond-qp-slb} or \eqref{stopcond-qp-ifista} becomes easier to satisfy. However, this does not necessarily reduce computational time. As observed in Tables \ref{table:qp_lambda0} and \ref{table:qp_lambdaneq0}, choosing $(\Upsilon,p)=(1, 2.1)$ or $\tau=0.1$ consistently results in the highest number of outer iterations but leads to the longest running time. This behavior aligns to some extent with the theoretical results in \cite{ad2015stability,bgk2023on,srb2011convergence,vsbv2013accelerated} and this work, which guarantee convergence (rates) of inexact APG methods only under certain error conditions, though these conditions appear to be conservative in practice. Conversely, setting $(\Upsilon,p)=(0.001, 3.1)$ leads to the fastest tolerance decay but does not necessarily produce the shortest running time for SpinAPG and iAPG-SLB/AIFB, primarily due to the excessive cost of solving each subproblem more accurately. This suggests that enforcing a tighter tolerance for solving subproblems does not necessarily improve overall efficiency, highlighting the trade-off between the number of outer and inner iterations.

To further evaluate the performance of inexact APG methods across varying problem dimensions and to demonstrate the potential advantages of SpinAPG in more general settings, we conduct additional experiments with problem sizes ranging from $n=50$ to $n=300$. Both the standard QP case ($\lambda = 0$) and the sparse variant ($\lambda > 0$) of \eqref{eq:QP_org} are considered. For each dimension $n$, we generate $10$ independent instances using different random seeds. The average numerical performance, illustrating the scaling behavior of each method, is presented in Figures \ref{figure:qp_lambda0} and \ref{figure:qp_lambdaneq0}. The tolerance parameters are chosen based on the performance shown in Tables \ref{table:qp_lambda0} and \ref{table:qp_lambdaneq0}. Specifically, for SpinAPG and iAPG-SLB/AIFB, we set $(\Upsilon,p)=(1,3.1)$; for o-iFB, we set $(\Upsilon,p)=(0.001,3.1)$; and for I-FISTA, we set $\tau=0.9$ and $\alpha=0.001L$.

\begin{figure}[ht]
\begin{minipage}[!]{0.48\linewidth}
\centering
\hfill
\renewcommand{\arraystretch}{1.1}
\tabcolsep 5pt
\scalebox{0.96}{
\begin{tabular}{c|cccc}
\toprule
 & \multicolumn{4}{c}{\texttt{kkt}} \\
\cmidrule(lr){2-5}
$n$ & \texttt{a} & \texttt{b} & \texttt{c} & \texttt{d} \\ \midrule
50 & 9.38e-07 & 9.61e-07 & 9.63e-07 & 9.39e-07 \\
100 & 9.56e-07 & 9.38e-07 & 9.71e-07 & 9.71e-07 \\
150 & 9.80e-07 & 9.84e-07 & 9.87e-07 & 9.90e-07 \\
200 & 9.86e-07 & 9.89e-07 & 9.66e-07 & 9.76e-07 \\
250 & 9.88e-07 & 9.87e-07 & 9.82e-07 & 9.84e-07 \\
300 & 9.88e-07 & 9.74e-07 & 9.71e-07 & 9.93e-07 \\
\bottomrule
\end{tabular}}
\end{minipage}
\quad
\begin{minipage}[!]{0.48\linewidth}
\centering
\includegraphics[width=0.9\textwidth]{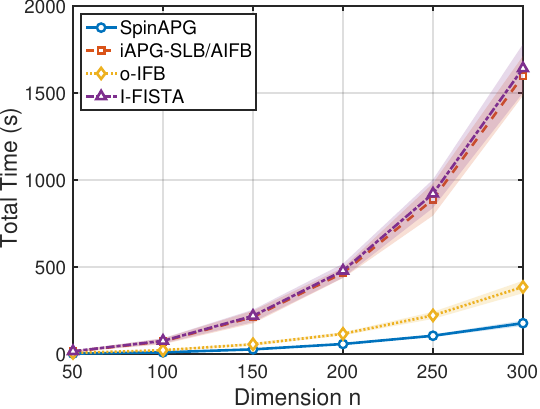}
\end{minipage}
\caption{Average numerical results of SpinAPG (\texttt{a}), iAPG-SLB/AIFB (\texttt{b}), o-iFB (\texttt{c}), and I-FISTA (\texttt{d}) for solving problem \eqref{eq:QP_org} ($\lambda =0$) with $n$ ranging from $50$ to $300$. The curves show the averaged runtime, and the shaded regions represent one standard deviation.}\label{figure:qp_lambda0}
\end{figure}

\begin{figure}[ht]
\begin{minipage}[!]{0.48\linewidth}
\centering
\hfill
\renewcommand{\arraystretch}{1.1}
\tabcolsep 5pt
\scalebox{0.96}{
\begin{tabular}{c|cccc}
\toprule
 & \multicolumn{4}{c}{\texttt{kkt}} \\
\cmidrule(lr){2-5}
$n$ & \texttt{a} & \texttt{b} & \texttt{c} & \texttt{d} \\ \midrule
50 & 3.01e-07 & 2.24e-07 & 1.94e-07 & 2.26e-07 \\
100 & 6.13e-07 & 6.87e-07 & 4.81e-07 & 5.05e-07 \\
150 & 7.81e-07 & 7.09e-07 & 8.66e-07 & 8.15e-07 \\
200 & 8.51e-07 & 8.36e-07 & 8.07e-07 & 8.13e-07 \\
250 & 7.77e-07 & 7.25e-07 & 7.57e-07 & 7.66e-07 \\
300 & 9.25e-07 & 9.08e-07 & 9.11e-07 & 8.72e-07 \\
\bottomrule
\end{tabular}}
\end{minipage}
\quad
\begin{minipage}[!]{0.48\linewidth}
\centering
\includegraphics[width=0.9\textwidth]{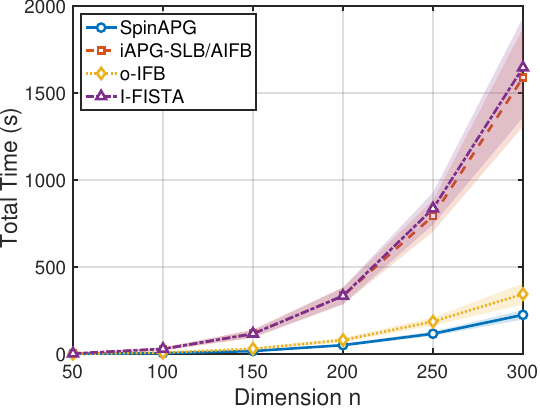}
\end{minipage}
\caption{Same as Figure \ref{figure:qp_lambda0} but for $\lambda =10 \|\bm{q}_0\|_{\infty}$.
}\label{figure:qp_lambdaneq0}
\end{figure}

As illustrated in Figures \ref{figure:qp_lambda0} and \ref{figure:qp_lambdaneq0}, SpinAPG consistently outperforms all other compared methods across all tested dimensions, exhibiting the fastest convergence in terms of total running time. Notably, for the projection onto $\mathrm{dom}\,P$, we have employed a highly efficient {\sc Ssn} to obtain a feasible point $\widetilde{\bm x}^{k,t}$. As previously mentioned, this {\sc Ssn} solver is significantly more efficient than standard alternatives such as {\sc Gurobi} and {\sc Matlab}'s built-in \texttt{quadprog}. Even so, the additional overhead incurred by iAPG-SLB, AIFB, o-iFB, and I-FISTA for computing feasible iterates becomes increasingly substantial as the problem size increases. This burden would be even greater in the absence of an efficient projection procedure, particularly when dealing with more complex feasible sets.

\section{Concluding remarks}\label{seccon}

In this work, we develop SpinAPG, a new inexact variant of the APG method, which employs a shadow-point enhanced error criterion that allows APG to operate with potentially infeasible iterates without explicitly constructing feasible ones, while remaining adaptable to various types of inexact errors when solving the subproblem. The proposed framework accommodates a broader range of extrapolation parameters and preserves all desirable convergence properties of the APG method, including the improved $o(1/k^2)$ convergence rate and the iterate convergence. Numerical experiments on sparse quadratic programming problems demonstrate its computational efficiency and practical advantages. Future work may explore extensions of the shadow-point enhanced error criterion to nonconvex problems and variable-metric settings, thereby broadening its applicability to more general optimization scenarios.

\begin{appendices}

\section{Additional proofs in Section \ref{sec-convana}}

\subsection{Proof of Lemma \ref{lem-thetasumbd}}\label{proof-lem-thetasumbd}

We first show by induction that $\theta_k\geq\frac{1}{k+ 2 }$ for all $k\geq0$. This obviously holds for $k=0$ since $\theta_0=1$. Suppose that $\theta_k\geq\frac{1}{k+ 2 }$ holds for some $k\geq0$. Since $0<\theta_k\leq1$, then $(1-\theta_{k})^{ 2 }\leq1-\theta_{k}$. Using this and \eqref{condtheta}, we see that $\frac{(1-\theta_{k+1})^{2}}{\theta_{k+1}^{2}}\leq\frac{1}{\theta_{k}^{2}}$ and hence $\frac{1-\theta_{k+1}}{\theta_{k+1}} \leq \frac{1}{\theta_{k}}$. This then implies that $\theta_{k+1}\geq\frac{\theta_k}{\theta_k+1}\geq\frac{1}{k+3}$. Thus, we complete the induction. Using this fact, we further obtain that
\begin{equation*}
\begin{aligned}
\sum^{k}_{i=0}\frac{1}{\theta_{i-1}^{ 2 }} \!-\! \frac{1-\theta_{i}}{\theta_{i}^{ 2 }}
= \frac{1}{\theta_{-1}^{ 2 }} - \frac{1}{\theta_{k}^{ 2 }}
+ \sum^{k}_{i=0}\frac{1}{\theta_{i}}
\leq 2 + \sum^{k}_{i=1}\frac{1}{\theta_{i}}
\leq 2 + \sum^{k}_{i=1}(i+2)
\leq 2 + \frac{(k+3 )^{ 2 }}{ 2 }.
\end{aligned}
\end{equation*}
This completes the proof.

\subsection{Proof of Lemma \ref{lem-recursion}}\label{proof-lem-recursion}

For any $k\geq0$, set $A_k:=\max_{0 \leq i \leq k}\big\{a_i\big\}$, $Q_k:=\max_{0\leq i \leq k}\big\{q_i\big\}$ and $P_k := \sum^k_{i=0}\big(\lambda_i+\widetilde{\lambda}_{i}\big)$. Then, we see from the recursion that, for any $0\leq \ell \leq k$,
\begin{equation*}
a_{\ell+1}^2
\leq q_\ell + {\textstyle\sum^\ell_{i=0}}\big(\lambda_ia_{i+1}+\widetilde{\lambda}_{i}a_i+c_i\big)
\leq Q_k + {\textstyle\sum^k_{i=0}}c_i + A_{k+1} P_k,
\end{equation*}
Taking the maximum over $0\leq \ell \leq k$ results in $A_{k+1}^2 \leq Q_k + {\textstyle\sum^k_{i=0}}c_i + A_{k+1} P_{k}$.
Consequently, bounding by the roots of the quadratic equation, we have $A_{k+1} \leq \frac{1}{2}P_k + \sqrt{Q_k + {\textstyle\sum^k_{i=0}} c_i+\big({\textstyle\frac{1}{2}} P_k\big)^2}$,
which, together with $a_{k+1}\leq A_{k+1}$, proves the first result. Moreover, for any $0\leq i \leq k$,
\begin{equation*}
\begin{aligned}
&\quad \lambda_i a_{i+1} + \widetilde{\lambda}_{i}a_i
\leq (\lambda_i+\widetilde{\lambda}_{i})\,A_{i+1} \\
&\leq {\textstyle\frac{1}{2}}(\lambda_i+\widetilde{\lambda}_{i})
P_i + (\lambda_i+\widetilde{\lambda}_{i})\sqrt{Q_i + {\textstyle\sum^i_{j=0}} c_j+\big(\textstyle{\frac{1}{2}}P_i\big)^2} \\
&\leq (\lambda_i+\widetilde{\lambda}_{i}){\textstyle\sum^i_{j=0}}
\big(\lambda_j+\widetilde{\lambda}_{j}\big) + (\lambda_i+\widetilde{\lambda}_{i})\big(Q_i^{\frac{1}{2}} + \big({\textstyle\sum^i_{j=0}}c_j\big)^{\frac{1}{2}}\big).
\end{aligned}
\end{equation*}
Thus, summing this inequality from $i=0$ to $i=k$, we have
\begin{equation*}
\begin{aligned}
&\quad {\textstyle\sum^k_{i=0}}\big(\lambda_ia_{i+1}+\widetilde{\lambda}_{i}a_i\big) \\
&\leq {\textstyle\sum^k_{i=0}}\big((\lambda_i+\widetilde{\lambda}_{i})
{\textstyle\sum^i_{j=0}}\big(\lambda_j+\widetilde{\lambda}_{j}\big)\big)
+ {\textstyle\sum^k_{i=0}}(\lambda_i+\widetilde{\lambda}_{i})Q_i^{\frac{1}{2}}
+ {\textstyle\sum^k_{i=0}}(\lambda_i+\widetilde{\lambda}_{i})
\big({\textstyle\sum^i_{j=0}}c_j\big)^{\frac{1}{2}} \\
&\leq \big({\textstyle\sum^k_{i=0}}(\lambda_i+\widetilde{\lambda}_{i})\big)^2
+ {\textstyle\sum^k_{i=0}}(\lambda_i+\widetilde{\lambda}_{i})Q_i^{\frac{1}{2}}
+ \big({\textstyle\sum^k_{i=0}}(\lambda_i+\widetilde{\lambda}_{i})\big)
\big({\textstyle\sum^k_{i=0}}c_i\big)^{\frac{1}{2}}.
\end{aligned}
\end{equation*}
From here, the second result follows.

\subsection{Proof of Lemma \ref{lem-appsuffdes}}\label{proof-lem-appsuffdes}

First, from condition \eqref{inexcond-iAPG}, there exists $\bm{d}^{k+1}\in \partial_{\varepsilon_k}P(\widetilde{\bm{x}}^{k+1})$ such that $\Delta^k=\bm{d}^{k+1} + \nabla f(\bm{y}^k) + L(\bm{x}^{k+1}-\bm{y}^{k})$. Then, for any $\bm{x}\in\mathrm{dom}\,P$, we see that
\begin{equation*}
P(\bm{x})
\geq P(\widetilde{\bm{x}}^{k+1}) + \langle\Delta^k-\nabla f(\bm{y}^k)-L(\bm{x}^{k+1}-\bm{y}^{k}), \,\bm{x}-\widetilde{\bm{x}}^{k+1}\rangle
- \varepsilon_k,
\end{equation*}
which, together with $\|\Delta^k\|\leq\eta_k$ and $\varepsilon_k\leq\nu_k$ in criterion \eqref{inexcond-iAPG}, implies that
\begin{equation*}
\begin{aligned}
P(\widetilde{\bm{x}}^{k+1})
&\leq P(\bm{x}) - \langle \nabla f(\bm{y}^k), \,\widetilde{\bm{x}}^{k+1} - \bm{x} \rangle
+ L\langle\bm{x}^{k+1}-\bm{y}^k, \,\bm{x}-\widetilde{\bm{x}}^{k+1}\rangle  
+ \eta_k\|\widetilde{\bm{x}}^{k+1}-\bm{x}\| + \nu_k.
\end{aligned}
\end{equation*}
Moreover, one can verify that
\begin{equation*}
\langle\bm{x}^{k+1}-\bm{y}^k, \,\bm{x}-\widetilde{\bm{x}}^{k+1}\rangle
= {\textstyle\frac{1}{2}}\|\bm{x}-\bm{y}^k\|^2
- {\textstyle\frac{1}{2}}\|\bm{x}-\bm{x}^{k+1}\|^2
- {\textstyle\frac{1}{2}}\|\widetilde{\bm{x}}^{k+1}-\bm{y}^k\|^2
+ {\textstyle\frac{1}{2}}\|\widetilde{\bm{x}}^{k+1}-\bm{x}^{k+1}\|^2.
\end{equation*}
Combining the above two inequalities, together with $\|\widetilde{\bm{x}}^{k+1}-\bm{x}^{k+1}\|\leq\mu_k$ in \eqref{inexcond-iAPG}, we obtain
\begin{equation}\label{ineq1-gen}
\begin{aligned}
P(\widetilde{\bm{x}}^{k+1})
&\leq P(\bm{x})
- \langle \nabla f(\bm{y}^k), \,\widetilde{\bm{x}}^{k+1} - \bm{x} \rangle
+ {\textstyle\frac{L}{2}}\|\bm{x}-\bm{y}^k\|^2
- {\textstyle\frac{L}{2}}\|\bm{x}-\bm{x}^{k+1}\|^2  \\
&\qquad
- {\textstyle\frac{L}{2}}\|\widetilde{\bm{x}}^{k+1}-\bm{y}^k\|^2
+ \eta_k\|\widetilde{\bm{x}}^{k+1}-\bm{x}\|
+ {\textstyle\frac{L}{2}}\mu_k^2 + \nu_k.
\end{aligned}
\end{equation}
On the other hand, since $f$ is convex and continuously differentiable with a Lipschitz continuous gradient (by Assumption \ref{assumA}2), then $f(\bm{x})
\geq f(\bm{y}^k) + \langle \nabla f(\bm{y}^k), \,\bm{x}-\bm{y}^k\rangle$ and
\begin{equation*}
\begin{aligned}
f(\widetilde{\bm{x}}^{k+1})
\leq f(\bm{y}^k)
+ \langle \nabla f(\bm{y}^k), \,\widetilde{\bm{x}}^{k+1}-\bm{y}^k\rangle
+ {\textstyle\frac{L}{2}}\|\widetilde{\bm{x}}^{k+1}-\bm{y}^k\|^2.
\end{aligned}
\end{equation*}
Summing these two inequalities yields that
\begin{equation}\label{ineq2-gen}
f(\widetilde{\bm{x}}^{k+1})
\leq f(\bm{x}) + \langle \nabla f(\bm{y}^k), \,\widetilde{\bm{x}}^{k+1}-\bm{x}\rangle + {\textstyle\frac{L}{2}}\|\widetilde{\bm{x}}^{k+1}-\bm{y}^k\|^2.
\end{equation}
Thus, summing \eqref{ineq1-gen} and \eqref{ineq2-gen}, we obtain the desired result.

\subsection{Proof of Lemma \ref{lem-suffdes-iAPG}}\label{proof-lem-suffdes-iAPG}

First, since $\widetilde{\bm{x}}^k\in\mathrm{dom}\,P$, $\mathrm{dom}\,P$ is convex and $\theta_{k}\in(0,1]$, then $(1-\theta_k)\widetilde{\bm{x}}^k+\theta_k\bm{x}\in\mathrm{dom}\,P$ for any $\bm{x}\in\mathrm{dom}\,P$. Setting $\bm{x}=(1-\theta_k)\widetilde{\bm{x}}^k+\theta_k\bm{x}$ in \eqref{suffdes1-iAPG} and using the convexity of $F$, we have
\begin{equation}\label{Fineq1-iAPG}
\begin{aligned}
F(\widetilde{\bm{x}}^{k+1})
&\leq F\big((1-\theta_k)\widetilde{\bm{x}}^k+\theta_k\bm{x}\big) \\
&\quad + {\textstyle\frac{L}{2}}\|(1-\theta_k)\widetilde{\bm{x}}^k+\theta_k\bm{x}-\bm{y}^{k}\|^2 - {\textstyle\frac{L}{2}}\|(1-\theta_k)\widetilde{\bm{x}}^k+\theta_k\bm{x}-\bm{x}^{k+1}\|^2 \\
&\quad + \eta_k\|\widetilde{\bm{x}}^{k+1} - (1-\theta_k)\widetilde{\bm{x}}^k-\theta_k\bm{x}\|
+ {\textstyle\frac{L}{2}}\mu_k^2 + \nu_k \\
&\leq (1-\theta_k)F(\widetilde{\bm{x}}^k) + \theta_k F(\bm{x}) \\
&\quad + {\textstyle\frac{L}{2}}\|(1-\theta_k)\widetilde{\bm{x}}^k+\theta_k\bm{x}-\bm{y}^{k}\|^2
- {\textstyle\frac{L}{2}}\|(1-\theta_k)\widetilde{\bm{x}}^k+\theta_k\bm{x}-\bm{x}^{k+1}\|^2 \\
&\quad + \eta_k\|\widetilde{\bm{x}}^{k+1} - (1-\theta_k)\widetilde{\bm{x}}^k-\theta_k\bm{x}\|
+ {\textstyle\frac{L}{2}}\mu_k^2 + \nu_k.
\end{aligned}
\end{equation}
Note that
\begin{equation}\label{Fineq3-iAPG}
\begin{aligned}
&\quad {\textstyle\frac{L}{2}}\|(1-\theta_k)\widetilde{\bm{x}}^k+\theta_k\bm{x}-\bm{y}^k\|^2 -{\textstyle\frac{L}{2}}\|(1-\theta_k)\widetilde{\bm{x}}^k+\theta_k\bm{x}-\bm{x}^{k+1}\|^2 \\
&=
{\textstyle\frac{L}{2}}\theta_k^2\|\bm{x}
- \big(\theta_k^{-1}\bm{y}^k - (\theta_k^{-1}-1)\bm{x}^k\big)
+ (\theta_k^{-1}-1)(\widetilde{\bm{x}}^k-\bm{x}^k)\|^2 \\
&\qquad - {\textstyle\frac{L}{2}}\theta_k^2\|\bm{x}
- \big(\bm{x}^{k+1} + (\theta_k^{-1}-1)(\bm{x}^{k+1}-\bm{x}^k)\big)
+ (\theta_k^{-1}-1)(\widetilde{\bm{x}}^k-\bm{x}^k)\|^2 \\
&=
{\textstyle\frac{L}{2}}\theta_k^2\|\bm{x} \!-\! \bm{z}^k + (\theta_k^{-1}\!-\!1)(\widetilde{\bm{x}}^k-\bm{x}^k)\|^2
- {\textstyle\frac{L}{2}}\theta_k^2\|\bm{x} \!-\! \bm{z}^{k+1} + (\theta_k^{-1}\!-\!1)(\widetilde{\bm{x}}^k-\bm{x}^k)\|^2 \\[3pt]
&=
{\textstyle\frac{L}{2}}\theta_k^2\|\bm{x} - \bm{z}^k\|^2 -{\textstyle\frac{L}{2}}\theta_k^2\|\bm{x} - \bm{z}^{k+1}\|^2
+ L\theta_k(1-\theta_k)\langle\bm{z}^{k+1}-\bm{z}^k, \,\widetilde{\bm{x}}^k-\bm{x}^k\rangle \\[3pt]
&\leq {\textstyle\frac{L}{2}}\theta_k^2\|\bm{x} - \bm{z}^k\|^2 -{\textstyle\frac{L}{2}}\theta_k^2\|\bm{x} - \bm{z}^{k+1}\|^2
+ L\theta_k(1-\theta_k)\mu_{k-1}\|\bm{z}^{k+1}-\bm{z}^k\| \\
&\leq {\textstyle\frac{L}{2}}\theta_k^2\|\bm{x} \!-\! \bm{z}^k\|^2 \!-\! {\textstyle\frac{L}{2}}\theta_k^2\|\bm{x} \!-\! \bm{z}^{k+1}\|^2
\!+\! L\theta_k(1\!-\!\theta_k)\mu_{k-1}\big(\|\bm{x}\!-\!\bm{z}^{k+1}\|
\!+\! \|\bm{x}\!-\!\bm{z}^k\|\big),
\end{aligned}
\end{equation}
where the second equality follows from $\bm{z}^k = \bm{x}^k + (\theta_{k-1}^{-1}-1)(\bm{x}^k-\bm{x}^{k-1}) = \theta_k^{-1}\bm{y}^{k} - (\theta_k^{-1}-1)\bm{x}^k$. Moreover,
\begin{equation}\label{Fineq4-iAPG}
\begin{aligned}
&\quad \|\widetilde{\bm{x}}^{k+1} - (1-\theta_k)\widetilde{\bm{x}}^k-\theta_k\bm{x}\|  \\
&= \|\big(\bm{x}^{k+1} - (1-\theta_k)\bm{x}^k - \theta_k\bm{x}\big)
+ \big(\widetilde{\bm{x}}^{k+1}-\bm{x}^{k+1}\big)
- (1-\theta_k)(\widetilde{\bm{x}}^{k}-\bm{x}^{k})\| \\
&= \|\theta_k\big(\bm{x}^{k+1} \!+\! (\theta_{k}^{-1}\!-\!1)(\bm{x}^{k+1}\!-\!\bm{x}^{k})\!-\!\bm{x}\big)
+ \big(\widetilde{\bm{x}}^{k+1}\!-\!\bm{x}^{k+1}\big)
- (1\!-\!\theta_k)(\widetilde{\bm{x}}^{k}\!-\!\bm{x}^{k})\| \\
&= \|\theta_k\big(\bm{z}^{k+1} - \bm{x}\big)
+ \big(\widetilde{\bm{x}}^{k+1}-\bm{x}^{k+1}\big)
- (1-\theta_k)(\widetilde{\bm{x}}^{k}-\bm{x}^{k})\| \\
&\leq \theta_k\|\bm{x}-\bm{z}^{k+1}\| + \mu_k + \mu_{k-1},
\end{aligned}
\end{equation}
where the inequality follows from criterion \eqref{inexcond-iAPG} and $\theta_k\in(0,1]$. Thus, substituting \eqref{Fineq3-iAPG} and \eqref{Fineq4-iAPG} into \eqref{Fineq1-iAPG}, we obtain that
\begin{equation*}
\begin{aligned}
F(\widetilde{\bm{x}}^{k+1})
&\leq (1-\theta_k)F(\widetilde{\bm{x}}^k) + \theta_k F(\bm{x})
+ {\textstyle\frac{L}{2}}\theta_k^2\|\bm{x} - \bm{z}^k\|^2
- {\textstyle\frac{L}{2}}\theta_k^2\|\bm{x} - \bm{z}^{k+1}\|^2  \\
&\quad
+ \left(\theta_k\eta_k+L\theta_k(1-\theta_k)\mu_{k-1}\right)\|\bm{x}-\bm{z}^{k+1}\|
+ L\theta_k(1-\theta_k)\mu_{k-1}\|\bm{x}-\bm{z}^k\|  \\
&\quad + {\textstyle\frac{L}{2}}\mu_k^2
+ \eta_k(\mu_k+\mu_{k-1}) + \nu_k.
\end{aligned}
\end{equation*}
Now, in the above inequality, subtracting $F(\bm{x})$ from both sides, dividing both sides by $\theta_k^2$ and then rearranging the resulting relation, we obtain that \begin{equation*}
\begin{aligned}
&\quad \theta_{k}^{-2}\big(F(\widetilde{\bm{x}}^{k+1})-F(\bm{x})\big)
+ {\textstyle\frac{L}{2}}\|\bm{x}-\bm{z}^{k+1}\|^2  \\
&\leq \theta_{k}^{-2}(1-\theta_k)\big(F(\widetilde{\bm{x}}^{k})-F(\bm{x})\big)
+ {\textstyle\frac{L}{2}}\|\bm{x}-\bm{z}^k\|^2 \\
&\quad + \left(\theta_k^{-1}\eta_k+L\theta_k^{-1}(1-\theta_k)\mu_{k-1}\right)
\|\bm{x}-\bm{z}^{k+1}\|
+ L\theta_k^{-1}(1-\theta_k)\mu_{k-1}\|\bm{x}-\bm{z}^k\| + \xi_k,
\end{aligned}
\end{equation*}
where $\xi_k:=\theta_k^{-2}\big(\frac{L}{2}\mu_k^2+\eta_k(\mu_k+\mu_{k-1})+\nu_k\big)$.
Moreover, for $k=0$, we have that $\theta_0^{-1}(1-\theta_0)=0<(\alpha-1)\theta_{-1}^{-1}$, and for $k\geq1$, we have that
\begin{equation*}
\theta_k^{-1}(1-\theta_k)\leq\theta_k\theta_{k-1}^{-2}
\leq\frac{(\alpha-1)(k+1)}{k+\alpha-1}
\theta_{k-1}^{-1}\leq(\alpha-1)\theta_{k-1}^{-1},
\end{equation*}
where the first inequality follows from \eqref{condtheta} and the second inequality follows from \eqref{condtheta1} and $\theta_{k-1}\geq\frac{1}{k+1}$ (by Lemma \ref{lem-thetasumbd}). This, together with the above inequality, implies that
\begin{equation}\label{suffdes-iAPG-tmp}
\begin{aligned}
&~~ \theta_{k}^{-2}\big(F(\widetilde{\bm{x}}^{k+1})-F(\bm{x})\big)
+ {\textstyle\frac{L}{2}}\|\bm{x} - \bm{z}^{k+1}\|^2 \\
&\leq \theta_k^{-2}(1-\theta_k)\big(F(\widetilde{\bm{x}}^{k})-F(\bm{x})\big)
+ {\textstyle\frac{L}{2}}\|\bm{x} - \bm{z}^k\|^2 \\
&\qquad
+ \big(\theta_k^{-1}\eta_k+(\alpha-1)L\theta_{k-1}^{-1}\mu_{k-1}\big)\|\bm{x}-\bm{z}^{k+1}\| + (\alpha-1)L\theta_{k-1}^{-1}\mu_{k-1}\|\bm{x}-\bm{z}^k\| + \xi_k.
\end{aligned}
\end{equation}
Moreover,
\begin{equation*}
\begin{aligned}
\theta_{k}^{-2}\big(F(\widetilde{\bm{x}}^{k+1})-F(\bm{x})\big)
&= \theta_{k+1}^{-2}(1-\theta_{k+1})\big(F(\widetilde{\bm{x}}^{k+1})-F(\bm{x})\big)
+ \vartheta_{k+1}\big(F(\widetilde{\bm{x}}^{k+1})-F(\bm{x})\big) \\
&\geq
\theta_{k+1}^{-2}(1-\theta_{k+1})\big(F(\widetilde{\bm{x}}^{k+1})-F(\bm{x})\big)
+ \vartheta_{k+1}\big(F^*-F(\bm{x})\big),
\end{aligned}
\end{equation*}
where the last inequality follows from $\vartheta_{k+1}:=\theta_{k}^{-2}-\theta_{k+1}^{-2}(1-\theta_{k+1})\geq0$ (by \eqref{condtheta}) and $F(\widetilde{\bm{x}}^{k+1}) \geq F^*$ for all $k\geq0$. Thus, combining the above relations yields \eqref{suffdes-iAPG} and completes the proof.

\subsection{Proof of Lemma \ref{lem-smallo}}\label{proof-lem-smallo}

\textit{Statement (i)}.
Let $\bm{x}^*$ be an arbitrary optimal solution of problem \eqref{mainpro}. It then follows from \eqref{suffdes-iAPG-tmp} with $\bm{x}^*$ in place of $\bm{x}$ that, for any $i\geq0$,
\begin{equation*}
\begin{aligned}
&\quad \big(\theta_{i-1}^{-2}-\theta_i^{-2}+\theta_i^{-1}\big)
\big(F(\widetilde{\bm{x}}^{i})-F(\bm{x}^*)\big) \\[2pt]
&\leq \theta_{i-1}^{-2}\big(F(\widetilde{\bm{x}}^{i})-F(\bm{x}^*)\big)
- \theta_{i}^{-2}\big(F(\widetilde{\bm{x}}^{i+1})-F(\bm{x}^*)\big)
+ {\textstyle\frac{L}{2}}\|\bm{x}^* - \bm{z}^i\|^2
- {\textstyle\frac{L}{2}}\|\bm{x}^* - \bm{z}^{i+1}\|^2 \\[2pt]
&\qquad
+ \big(\theta_i^{-1}\eta_i+(\alpha-1)L\theta_{i-1}^{-1}\mu_{i-1}\big)
\|\bm{x}^*-\bm{z}^{i+1}\|
+ (\alpha-1)L\theta_{i-1}^{-1}\mu_{i-1}\|\bm{x}^*-\bm{z}^i\| + \xi_i.
\end{aligned}
\end{equation*}
Then, for any $k\geq0$, summing the above inequality from $i=0$ to $i=k$ results in
\begin{equation*}
\begin{aligned}
&\quad {\textstyle\sum^k_{i=0}}\big(\theta_{i-1}^{-2}-\theta_i^{-2}+\theta_i^{-1}\big)
\big(F(\widetilde{\bm{x}}^{i})-F(\bm{x}^*)\big) \\
&\leq \theta_{-1}^{-2}\big(F(\widetilde{\bm{x}}^{0})-F(\bm{x}^*)\big)
- \theta_{k}^{-2}\big(F(\widetilde{\bm{x}}^{k+1})-F(\bm{x}^*)\big)
+ {\textstyle\frac{L}{2}}\|\bm{x}^* - \bm{z}^0\|^2
- {\textstyle\frac{L}{2}}\|\bm{x}^* - \bm{z}^{k+1}\|^2 \\
&~~
+ {\textstyle\sum^{k}_{i=0}}
\big((\theta_i^{-1}\eta_i+(\alpha\!-\!1)L\theta_{i-1}^{-1}\mu_{i-1})\|\bm{x}^*-\bm{z}^{i+1}\|
+ (\alpha\!-\!1)L\theta_{i-1}^{-1}\mu_{i-1}\|\bm{x}^*-\bm{z}^i\| + \xi_i\big) \\
&\leq \theta_{-1}^{-2}\big(F(\widetilde{\bm{x}}^{0})-F(\bm{x}^*)\big)
+ {\textstyle\frac{L}{2}}\|\bm{x}^* - \bm{z}^0\|^2
+ \overline{\delta}_k\|\bm{x}^*-\bm{z}^0\| + B_k,
\end{aligned}
\end{equation*}
where the last inequality follows from \eqref{skbd-iAPG} with $e(\bm{x}^*)=0$, and $B_k:=(2/L)\,\overline{\delta}_k^2+\sqrt{2/L}\,\overline{\delta}_k\,\overline{\xi}_k^{\frac{1}{2}}+\overline{\xi}_k$ with $\overline{\delta}_k$ and $\overline{\xi}_k$ defined in \eqref{defnots}. Moreover, since $\sum\theta_k^{-1}\eta_k<\infty$, $\sum\theta_k^{-1}\mu_k<\infty$, $\sum\theta_k^{-2}\nu_k<\infty$ and $\theta_k\in(0,1]$, one can verify that $\lim\limits_{k\to\infty}\overline{\delta}_k<\infty$ and $\lim\limits_{k\to\infty}B_k<\infty$. Thus, we see that
\begin{equation*} {\textstyle\sum^{\infty}_{k=0}}\big(\theta_{k-1}^{-2}-\theta_k^{-2}+\theta_k^{-1}\big)
\big(F(\widetilde{\bm{x}}^{k})-F(\bm{x}^*)\big) < \infty.
\end{equation*}
This, together with $\theta_{k-1}^{-2}\!-\!\theta_k^{-2}\!+\!\theta_k^{-1}
= \frac{(\alpha-3)k+(\alpha-2)^2}{(\alpha-1)^2}$ and $\alpha>3$, proves statement (i).

\vspace{1mm}
\textit{Statement (ii)}.
Next, setting $\bm{x}=\widetilde{\bm{x}}^k$ in \eqref{suffdes1-iAPG}, we see that, for any $k\geq0$,
\begin{equation*}
F(\widetilde{\bm{x}}^{k+1}) + {\textstyle\frac{L}{2}}\|\bm{x}^{k+1}-\widetilde{\bm{x}}^k\|^2
\leq F(\widetilde{\bm{x}}^k)
+ {\textstyle\frac{L}{2}}\|\bm{y}^{k}-\widetilde{\bm{x}}^k\|^2
+ \eta_k\|\widetilde{\bm{x}}^{k+1}-\widetilde{\bm{x}}^k\|
+ {\textstyle\frac{L}{2}}\mu_k^2 + \nu_k.
\end{equation*}
Note also that
\begin{equation*}
\begin{aligned}
\|\widetilde{\bm{x}}^{k+1}-\widetilde{\bm{x}}^k\|
&=\|\bm{x}^{k+1} \!-\! \bm{x}^k + \widetilde{\bm{x}}^{k+1} \!-\! \bm{x}^{k+1}
+ \bm{x}^k \!-\! \widetilde{\bm{x}}^k\|
\leq \|\bm{x}^{k+1}-\bm{x}^k\| + \mu_k + \mu_{k-1}, \\[2pt]
\|\bm{x}^{k+1}-\widetilde{\bm{x}}^k\|^2
&= \|\bm{x}^{k+1}-\bm{x}^k + \bm{x}^k - \widetilde{\bm{x}}^k\|^2 \\
&= \|\bm{x}^{k+1}-\bm{x}^k\|^2 + 2\langle\bm{x}^{k+1}-\bm{x}^k,\,\bm{x}^k - \widetilde{\bm{x}}^k\rangle + \|\bm{x}^k - \widetilde{\bm{x}}^k\|^2 \\
&\geq \|\bm{x}^{k+1}-\bm{x}^k\|^2 + \|\bm{x}^k - \widetilde{\bm{x}}^k\|^2 - 2\mu_{k-1}\|\bm{x}^{k+1}-\bm{x}^k\|, \\[2pt]
\|\bm{y}^{k}-\widetilde{\bm{x}}^k\|^2
&= \|\bm{x}^k + \theta_k(\theta_{k-1}^{-1}-1)(\bm{x}^k-\bm{x}^{k-1})-\widetilde{\bm{x}}^k\|^2 \\
&= \theta_k^2(\theta_{k-1}^{-1}-1)^2\|\bm{x}^{k}-\bm{x}^{k-1}\|^2 + 2\theta_k(\theta_{k-1}^{-1}-1)\langle\bm{x}^{k}-\bm{x}^{k-1},\bm{x}^k - \widetilde{\bm{x}}^k\rangle  + \|\bm{x}^k - \widetilde{\bm{x}}^k\|^2 \\
&\leq \big({\textstyle\frac{k-1}{k+\alpha-1}}\big)^2\|\bm{x}^{k}\!-\!\bm{x}^{k-1}\|^2 + \|\bm{x}^k \!-\! \widetilde{\bm{x}}^k\|^2 + 2\mu_{k-1}\big({\textstyle\frac{k-1}{k+\alpha-1}}\big)\|\bm{x}^{k}\!-\!\bm{x}^{k-1}\|.
\end{aligned}
\end{equation*}
Then, combining the above inequalities results in
\begin{equation*}
\begin{aligned}
&\quad F(\widetilde{\bm{x}}^{k+1}) + {\textstyle\frac{L}{2}}\|\bm{x}^{k+1}-\bm{x}^k\|^2  \\
&\leq F(\widetilde{\bm{x}}^k)
+ {\textstyle\frac{L}{2}\big(\frac{k-1}{k+\alpha-1}\big)^2}
\|\bm{x}^{k}-\bm{x}^{k-1}\|^2
+ (\eta_k+2\mu_{k-1})\|\bm{x}^{k+1}-\bm{x}^k\| \\
&\quad + 2\mu_{k-1}\big({\textstyle\frac{k-1}{k+\alpha-1}}\big)\|\bm{x}^{k}-\bm{x}^{k-1}\|
+ {\textstyle\frac{L}{2}}\mu_k^2 + \eta_k(\mu_k + \mu_{k-1}) + \nu_k.
\end{aligned}
\end{equation*}
This together with $k+\alpha-1 \geq k+1$ (since $\alpha>3$) implies that, for any $k\geq0$,
\begin{equation*}
\begin{aligned}
&\quad~ (k+1)^2F(\widetilde{\bm{x}}^{k+1}) + {\textstyle\frac{L}{2}}(k+1)^2\|\bm{x}^{k+1}-\bm{x}^k\|^2 \\
&\leq (k+1)^2F(\widetilde{\bm{x}}^k)
+ {\textstyle\frac{L}{2}}(k-1)^2\|\bm{x}^{k}-\bm{x}^{k-1}\|^2
+ (\eta_k + 2\mu_{k-1})(k+1)^2\|\bm{x}^{k+1}-\bm{x}^k\| \\
&\qquad + 2\mu_{k-1}(k-1)(k+1)\|\bm{x}^{k}-\bm{x}^{k-1}\| + (k+1)^2\big({\textstyle\frac{L}{2}}\mu_k^2 + \eta_k(\mu_k + \mu_{k-1}) + \nu_k\big) \\
&\leq (k+1)^2F(\widetilde{\bm{x}}^k)
+ {\textstyle\frac{L}{2}}(k-1)^2\|\bm{x}^{k}-\bm{x}^{k-1}\|^2
+ p_k(k+1)\|\bm{x}^{k+1}-\bm{x}^k\|  \\
&\qquad + \widetilde{p}_{k}(k-1)\|\bm{x}^{k}-\bm{x}^{k-1}\| + s_k,
\end{aligned}
\end{equation*}
where $p_k:=(k+1)(\eta_k + 2\mu_{k-1})$, $\widetilde{p}_{k}:=2(k+1)\mu_{k-1}$ and $s_k:=(k+1)^2\big(\frac{L}{2}\mu_k^2 + \eta_k(\mu_k+\mu_{k-1})+\nu_k\big)$. Subtracting $(k+1)^2F(\bm{x}^*)$ from both sides in the above inequality and rearranging the resulting relation, we obtain that
\begin{equation}\label{recur-k2f}
\begin{aligned}
&\quad (k+1)^2\big(F(\widetilde{\bm{x}}^{k+1})-F(\bm{x}^*)\big) + {\textstyle\frac{L}{2}}(k+1)^2\|\bm{x}^{k+1}-\bm{x}^k\|^2 \\
&\leq k^2\big(F(\widetilde{\bm{x}}^k)-F(\bm{x}^*)\big)
+ {\textstyle\frac{L}{2}}(k\!-\!1)^2\|\bm{x}^{k}-\bm{x}^{k-1}\|^2
+ (2k\!+\!1)\big(F(\widetilde{\bm{x}}^k)-F(\bm{x}^*)\big) \\
&\qquad
+ p_k(k+1)\|\bm{x}^{k+1}-\bm{x}^k\|
+ \widetilde{p}_{k}(k-1)\|\bm{x}^{k}-\bm{x}^{k-1}\|
+ s_k \\
&\leq k^2\big(F(\widetilde{\bm{x}}^k)-F(\bm{x}^*)\big)
+ {\textstyle\frac{L}{2}}k^2\|\bm{x}^{k}-\bm{x}^{k-1}\|^2
+ (2k+1)\big(F(\widetilde{\bm{x}}^k)-F(\bm{x}^*)\big) \\
&\qquad
+ p_k(k+1)\|\bm{x}^{k+1}-\bm{x}^k\|
+ \widetilde{p}_{k}k\|\bm{x}^{k}-\bm{x}^{k-1}\|
+ s_k.
\end{aligned}
\end{equation}
Then, from this inequality, we see that, for any $k\geq0$,
\begin{equation*}
\begin{aligned}
&\quad (k+1)^2\big(F(\widetilde{\bm{x}}^{k+1})-F(\bm{x}^*)\big) + {\textstyle\frac{L}{2}}(k+1)^2\|\bm{x}^{k+1}-\bm{x}^k\|^2 \\
&\leq {\textstyle\sum^k_{i=0}}(2i+1)\big(F(\widetilde{\bm{x}}^i)-F(\bm{x}^*)\big)+ {\textstyle\sum^k_{i=0}}\big(p_i(i+1)\|\bm{x}^{i+1}-\bm{x}^i\|
+ \widetilde{p}_{i}i\|\bm{x}^{i}-\bm{x}^{i-1}\|
+ s_i\big),
\end{aligned}
\end{equation*}
which, together with $F(\widetilde{\bm{x}}^{k+1})-F(\bm{x}^*)\geq0$, implies that, for any $k\geq0$,
\begin{equation*}
\begin{aligned}
{\textstyle\frac{L}{2}}(k+1)^2\|\bm{x}^{k+1}-\bm{x}^k\|^2
&\leq {\textstyle\sum^k_{i=0}}(2i+1)\big(F(\widetilde{\bm{x}}^i)-F(\bm{x}^*)\big) \\
&\quad + {\textstyle\sum^k_{i=0}}\big(p_i(i+1)\|\bm{x}^{i+1}-\bm{x}^i\|
+ \widetilde{p}_{i}i\|\bm{x}^{i}-\bm{x}^{i-1}\| + s_i\big).
\end{aligned}
\end{equation*}
Thus, applying Lemma \ref{lem-recursion} with $a_k:=\sqrt{L/2}\,k\|\bm{x}^{k}-\bm{x}^{k-1}\|$, $q_k\!:=\!\sum^k_{i=0}(2i+1)\big(F(\widetilde{\bm{x}}^i)-F(\bm{x}^*)\big)$, $c_k:=s_k$, $\lambda_k:=\sqrt{2/L}\,p_k$ and $\widetilde{\lambda}_k:=\sqrt{2/L}\,\widetilde{p}_k$, and noticing that $\{q_k\}$ is nondecreasing in this case, we get
\begin{equation}\label{skbd1-iAPG-tmp1}
\begin{aligned}
&\quad {\textstyle\sum^k_{i=0}}\big(p_i(i+1)\|\bm{x}^{i+1}-\bm{x}^i\|
+ \widetilde{p}_{i}i\|\bm{x}^{i}-\bm{x}^{i-1}\| + s_i\big) \\
&\leq {\textstyle\frac{2}{L}}\!\left({\textstyle\sum^k_{i=0}}\big(p_i
\!+\!\widetilde{p}_i\big)\right)^2
\!+\! \sqrt{{\textstyle\frac{2}{L}}}\,{\textstyle\sum^k_{i=1}}\!\!\left(\!\big(p_i
\!+\!\widetilde{p}_i\big)\sqrt{{\textstyle\sum^i_{j=0}}(2j\!+\!1)
\big(F(\widetilde{\bm{x}}^j)\!-\!F(\bm{x}^*)\big)}\right) \\
&\qquad
+\sqrt{{\textstyle\frac{2}{L}}}\left({\textstyle\sum^k_{i=0}}\big(p_i
+\widetilde{p}_i\big)\right)\left({\textstyle\sum^k_{i=0}}s_i\right)^{\frac{1}{2}}
+ {\textstyle\sum^k_{i=0}}s_i.
\end{aligned}
\end{equation}
Moreover, since $\sum\theta_k^{-1}\eta_k<\infty$ and $\sum\theta_k^{-1}\mu_k<\infty$, one can verify that $\sum p_i<\infty$, $\sum \widetilde{p}_i<\infty$ and $\sum s_i < \infty$. This together with statement (i) and \eqref{skbd1-iAPG-tmp1} implies that
\begin{equation}\label{sumfseqbd}
\sum\Big((2i+1)\big(F(\widetilde{\bm{x}}^i)-F(\bm{x}^*)\big)
+ p_i(i+1)\|\bm{x}^{i+1}-\bm{x}^i\|
+ \widetilde{p}_{i}i\|\bm{x}^{i}-\bm{x}^{i-1}\| + s_i \Big)
< \infty.
\end{equation}
Now, using the first inequality in \eqref{recur-k2f} and $k^2+k+1\leq(k+1)^2$, we have that
\begin{equation*}
\begin{aligned}
&\quad (k+1)^2\big(F(\widetilde{\bm{x}}^{k+1})-F(\bm{x}^*)\big) + {\textstyle\frac{L}{2}}k^2\|\bm{x}^{k+1}-\bm{x}^k\|^2 + {\textstyle\frac{L}{2}}(k+1)\,\|\bm{x}^{k+1}-\bm{x}^k\|^2 \\
&\leq (k+1)^2\big(F(\widetilde{\bm{x}}^{k+1})-F(\bm{x}^*)\big) + {\textstyle\frac{L}{2}}(k+1)^2\|\bm{x}^{k+1}-\bm{x}^k\|^2 \\
&\leq k^2\big(F(\widetilde{\bm{x}}^k)-F(\bm{x}^*)\big)
+ {\textstyle\frac{L}{2}}(k-1)^2\|\bm{x}^{k}-\bm{x}^{k-1}\|^2
+ (2k+1)\big(F(\widetilde{\bm{x}}^k)-F(\bm{x}^*)\big) \\
&\qquad
+ p_k(k+1)\|\bm{x}^{k+1}-\bm{x}^k\|
+ \widetilde{p}_{k}k\|\bm{x}^{k}-\bm{x}^{k-1}\| + s_k
\end{aligned}
\end{equation*}
and thus
\begin{equation*}
\begin{aligned}
&\quad (k+1)^2\big(F(\widetilde{\bm{x}}^{k+1})-F(\bm{x}^*)\big) + {\textstyle\frac{L}{2}}k^2\|\bm{x}^{k+1}-\bm{x}^k\|^2
+ {\textstyle\frac{L}{2}}{\textstyle\sum^k_{i=0}}(i+1)\|\bm{x}^{i+1}-\bm{x}^i\|^2\\
&\leq {\textstyle\sum^k_{i=0}}\left((2i+1)\big(F(\widetilde{\bm{x}}^i)-F(\bm{x}^*)\big)
+ p_i(i+1)\|\bm{x}^{i+1}-\bm{x}^i\|
+ \widetilde{p}_{i}i\|\bm{x}^{i}-\bm{x}^{i-1}\| + s_i \right).
\end{aligned}
\end{equation*}
Combining this and \eqref{sumfseqbd} yields $\sum k\|\bm{x}^{k}-\bm{x}^{k-1}\|^2 < \infty$ and proves statement (ii).

\vspace{1mm}
\textit{Statement (iii)}.
It is readily follows from \eqref{recur-k2f}, \eqref{sumfseqbd} and Lemma \ref{lemseqcon} that $\big\{k^2\big(F(\widetilde{\bm{x}}^k)-F^*\big) + \frac{L}{2}k^2\|\bm{x}^{k}-\bm{x}^{k-1}\|^2\big\}$ is convergent and hence $\lim\limits_{k\to\infty}k^2\big(F(\widetilde{\bm{x}}^k)-F^*\big) + \frac{L}{2}k^2\|\bm{x}^{k}-\bm{x}^{k-1}\|^2$ exists. We then complete the proof.

\end{appendices}

\section*{Acknowledgments}

\noindent The research of Lei Yang is supported by the National Natural Science Foundation of China under grant 12301411. The research of Meixia Lin is supported by the Ministry of Education, Singapore, under its Academic Research Fund Tier 2 grant call (MOE-T2EP20123-0013).

\bibliographystyle{plain}
\bibliography{references/Ref_SpinAPG}

\end{document}